\documentclass[10pt]{article}
\usepackage{lmodern}
\usepackage{amsmath}
\usepackage[T1]{fontenc}
\usepackage[utf8]{inputenc}
\usepackage{authblk}
\usepackage{amsfonts}
\usepackage{graphicx}
\usepackage{rotating}
\usepackage{amssymb}
\usepackage[english]{babel}
\usepackage{color}
\usepackage{amsthm}
\usepackage{graphicx}
\usepackage{mathrsfs}
\usepackage{makecell}
\usepackage{microtype}
\usepackage{enumerate}
\usepackage{mathscinet}
\usepackage{array}
\usepackage{multirow}
\usepackage{booktabs}
\usepackage{enumerate}
\usepackage{booktabs}
\usepackage[cal=boondoxo,bb=ams]{mathalfa}
\usepackage{hyperref}
\hypersetup{hidelinks}
\usepackage{titlesec}
\pagestyle{myheadings}

\newtheorem{theorem}{Theorem}[section]
\newtheorem{prop}{Proposition}[section]

\newtheorem{coro}{Corollary}[section]
\newtheorem{remark}{Remark}[section]

\newcommand{\ml}{\mathcal}
\newcommand{\mb}{\mathbb}

\DeclareMathOperator{\non}{non}
\DeclareMathOperator{\lin}{lin}
\DeclareMathOperator{\intt}{int}
\DeclareMathOperator{\extt}{ext}
\DeclareMathOperator{\bdd}{bdd}

\title{Asymptotic behaviors for the Jordan-Moore-Gibson-Thompson equation in the viscous case}
\author[1,2]{Wenhui Chen\thanks{Wenhui Chen (wenhui.chen.math@gmail.com)}}
\affil[1]{School of Mathematics and Information Science, Guangzhou University, 510006 Guangzhou, China}
\affil[2]{School of Mathematical Sciences, Shanghai Jiao Tong University, 200240 Shanghai, China}
\author[3]{Hiroshi Takeda\thanks{Hiroshi Takeda (h-takeda@fit.ac.jp)}}
\affil[3]{Department of Intelligent Mechanical Engineering, Faculty of Engineering, Fukuoka Institute of Technology,  811-0295 Fukuoka, Japan}

\setlength{\topmargin}{-10mm}
\setlength{\textwidth}{7in}
\setlength{\oddsidemargin}{-8mm}
\setlength{\textheight}{9in}
\setlength{\footskip}{1in}
\date{}
\begin{document}

\maketitle
\begin{abstract}
In this paper, we study large-time behaviors for a fundamental model in nonlinear acoustics, precisely, the viscous Jordan-Moore-Gibson-Thompson (JMGT) equation in the whole space $\mathbb{R}^n$. This model describes nonlinear acoustics in perfect gases under irrotational flow and equipping Cattaneo's law of heat conduction. By employing refined WKB analysis and Fourier analysis, we derive first- and second-order asymptotic profiles of solution to the Moore-Gibson-Thompson (MGT) equation as $t\gg 1$, which illustrates novel optimal estimates for the solutions even subtracting its profiles. Concerning the nonlinear JMGT equation, via suggesting a new decomposition of nonlinear portion, we investigate the existence and large-time profiles of global (in time) small data Sobolev solutions with suitable regularity. These results help bridge a new connection between the JMGT equation and diffusion-waves as $t\gg1$.\\
	
	\noindent\textbf{Keywords:} Nonlinear acoustics, Cauchy problem, third-order hyperbolic equation, optimal estimates, asymptotic profiles, global existence of solution.\\
	
	\noindent\textbf{AMS Classification (2020)} Primary: 35L75; Secondary: 35L30, 35A01, 35B40.
\end{abstract}
\fontsize{12}{15}
\selectfont

\section{Introduction}
\subsection{Background for the JMGT equation}
It is widely known that nonlinear acoustics arise in numerous applications, for example, high-intensity ultrasonic waves have been applied in medical imaging and therapy, ultrasound cleaning and welding (see \cite{Abramov-1999,Dreyer-Krauss-Bauer-Ried-2000,Kaltenbacher-Landes-Hoffelner-Simkovics-2002} and references therein). This necessitates a deep comprehend of nonlinear acoustic models as well as their analytical behaviors. There are several mathematical models for describing nonlinear acoustics phenomena, e.g. Kuznetsov's equation, Blackstock's model, the Jordan-Moore-Gibson-Thompson equation, and the Khokhlov-Zabolotskaya-Kuznetsov equation, etc. In the present paper, we will study some large-time behaviors for the Jordan-Moore-Gibson-Thompson equation and its linearized model in the whole space $\mb{R}^n$.

To characterize propagations of sound in thermoviscous fluids, the classical theory of nonlinear acoustics considers the mathematical model from the irrotational Navier-Stokes-Fourier equations with the Lighthill scheme of approximation procedures.  To be specific, let us amalgamate the conservation of mass, momentum and energy associated with Fourier's law of heat conduction
\begin{align}\label{Fourier-law}
	\mathbf{q}=-\kappa\nabla \Theta,
\end{align} 
with the thermal conductivity $\kappa>0$, where $\mathbf{q}$ and $\Theta$ denote, respectively, the heat flux vector and the absolute temperature. Moreover, by considering the state equation for perfect gases and neglecting higher-order terms (i.e. the Lighthill scheme of approximation procedures) in the derivations when combining the governing equations under the irrotational flow, we can derive one of the most established models in nonlinear acoustics, i.e. Kuznetsov's equation as follows:
\begin{align}\label{Kuz-Eq}
	\varphi_{tt}-c_0^2\Delta\varphi-\delta\Delta\varphi_t=\partial_t\left(\frac{B}{2A}(\varphi_t)^2+|\nabla\varphi|^2\right),
\end{align}
where the scalar unknown $\varphi=\varphi(t,x)\in\mb{R}$ denotes the acoustic velocity potential under Fourier's law of heat conduction. The constant coefficient $c_0>0$ of the Laplacian is the speed of sound in the undisturbed gas, $\delta>0$ is referred to the diffusivity of sound, and the constant ratio $B/A>0$ indicates the nonlinearity of the equation of state for a given fluid.  

Nevertheless, the application of Fourier's law \eqref{Fourier-law} in acoustic waves leads to an infinite signal speed paradox. It seems to be incompatible in phenomena of wave propagations. Therewith, Cattaneo's law of heat conduction 
\begin{align}\label{Cattaneo-law}
	\tau\mathbf{q}_t+\mathbf{q}=-\kappa\nabla \Theta,
\end{align}
where the positive constant $\tau$ designates the small thermal relaxation, i.e. $0<\tau\ll 1$, can be used instead to eliminate this paradox. The combination of three conservation laws carrying Cattaneo's law \eqref{Cattaneo-law}  and the state equation leads to the third-order (in time) nonlinear hyperbolic equation
\begin{align}\label{JMGT-ALL}
	\tau \psi_{ttt}+\psi_{tt}-c_0^2\Delta \psi-(\delta+\tau c_0^2)\Delta \psi_t=\partial_t\left(\frac{B}{2A}(\psi_t)^2+|\nabla\psi|^2\right)
\end{align}
with the acoustic velocity potential $\psi=\psi(t,x)\in\mb{R}$ under Cattaneo's law of heat conduction. Namely, as mentioned in \cite{Dell-Lasiecka-Pata}, the equation \eqref{JMGT-ALL} can be derived by the Navier-Stokes-Cattaneo equations with irrotational assumption. In the literature, this model \eqref{JMGT-ALL} is the so-called Jordan-Moore-Gibson-Thompson (JMGT) equation and we refer to \cite{Jordan-2014} for its derivation in detail. 

\subsection{Mathematical researches on the JMGT equation}
 Before recalling the recent studies for the nonlinear JMGT equation, let us begin with considering the corresponding linearized model to \eqref{JMGT-ALL}, i.e. the Moore-Gibson-Thompson (MGT) equation
\begin{align}\label{MGT-ALL}
	\tau \psi_{ttt}+\psi_{tt}-c_0^2\Delta \psi-(\delta+\tau c_0^2)\Delta \psi_t=0,
\end{align}
which has caught a lot of attentions in recent years (see \cite{Kaltenbacher-Lasiecka-Marchand-2011,Marchand-McDevitt-Triggiani-2012,Conejero-Lizama-Rodenas-2015,Dell-Pata-2017,Pellicer-Said-2017,B-L-2019,Chen-Ikehata=2021} and references therein). We underline that $\delta=0$ is the threshold to distinguish some crucial behaviors of solutions, in which the diffusivity of sound $\delta$ heavily depends on the shear viscosity $\mu_{\mathrm{V}}$ and the bulk viscosity $\mu_{\mathrm{B}}$ of a given fluid (the viscous term in the conservation of momentum). We next illustrate the  significance of $\delta$ and introduce some  nomenclatures.
\begin{itemize}
	\item When $\delta>0$, it is called \emph{the viscous case} because the dissipative effect partly originates from the viscous term in the Navier-Stokes equation, and its energy exhibits exponentially stable on smooth bounded domains.
	\item When $\delta=0$, it is called \emph{the inviscid case} due to the vanishing viscous coefficient in the Euler equation from the modeling perspective, moreover, a corresponding energy of the equation is conserved.
	\item When $\delta<0$, it is called \emph{the chaotic case} since the solution is unstable in the sense of $\mathrm{e}^{ct}$ with $c>0$. The real parts of three eigenvalues are strictly positive.
\end{itemize}
 The Cauchy problem for \eqref{MGT-ALL} in the viscous case was studied by \cite{Pellicer-Said-2017} firstly, where some decay estimates of solutions for higher-dimensions have been derived by applying energy methods in the Fourier space. Later, the authors of \cite{Chen-Ikehata=2021} improved some results in the paper \cite{Pellicer-Said-2017} by considering the combined effect of dissipation and dispersion, which allows us to get optimal growth ($n=1,2$) and optimal decay ($n\geqslant3$) estimates. Moreover, some singular limit results with respect to the small thermal relaxation were obtained by applying energy methods associated with an integral unknown.

Let us turn to the JMGT equation \eqref{JMGT-ALL}, which has been initially studied by \cite{Kaltenbacher-Lasiecka-Pos-2012,Kaltenbacher-Nikolic-2019}. Due to the theme of this work, we just briefly introduce the progressive progress in the corresponding Cauchy problem for \eqref{JMGT-ALL} in the viscous case. The authors of \cite{Racke-Said-2020} demonstrated global (in time) well-posedness result in three-dimensions by higher-order energy methods together with a bootstrap argument, and derived some decay estimates of solutions. Quite recently, the author of \cite{Said-2021} improved the global (in time) existence result in \cite{Racke-Said-2020} for three-dimensions, where the smallness assumption for higher-order regular data was dropped in the framework of Sobolev spaces with negative index. So far the sharp estimates and asymptotic profiles of solutions are still open, especially, in the physical dimensions $n=1,2,3$.

Concerning other related works of the JMGT equation, we refer interested readers to \cite{Said-2021-Bes} for Besov spaces framework, \cite{Kaltenbacher-Nikolic-2021} for inviscid limits with respect to the diffusivity of sound, \cite{B-L-2020} for singular limits with respect to the thermal relaxation, \cite{Nikolic-Said-2021} for hereditary fluids, and \cite{Kaltenbacher-Nikolic-2022} for the time-fractional derivative models. Nevertheless, to the best of authors' knowledge, large-time asymptotic profiles for the solutions do not be deeply investigated, particularly, the second-order profiles (even for the linearized problem). We will  answer these questions in the present paper, and our novelty is to suggest a new connection between (Jordan-)MGT as well as diffusion-waves (sometimes Kuznetsov's equation) as $t\gg 1$. One may see Remarks \ref{Rem_MGT_Kuz} and \ref{Remark_Linear_Profi} for the linear problem; Remark \ref{Rem-4.n} for the nonlinear problem.

\subsection{Main purposes of the paper}
Our first purpose is to obtain sharp asymptotic profiles of solutions to the Cauchy problem for the following MGT equation (without loss of generality, we take $c_0^2=1$ hereinafter):
\begin{align}\label{Eq_MGT}
	\begin{cases}
		\tau \psi_{ttt}+\psi_{tt}-\Delta\psi-(\delta+\tau)\Delta\psi_t=0,&x\in\mb{R}^n,\ t>0,\\
		\psi(0,x)=\psi_0(x),\ \psi_t(0,x)=\psi_1(x),\ \psi_{tt}(0,x)=\psi_2(x),&x\in\mb{R}^n,
	\end{cases}
\end{align} 
with the thermal relaxation $\tau>0$ and the diffusivity of sound $\delta>0$. In Subsection \ref{Sub_Sec_Extend}, we extend the estimates in \cite[Section 2]{Chen-Ikehata=2021} to the non-vanishing data and some derivatives for the solution, which are the arrangements for exploring the nonlinear JMGT equation. Additionally, concerning large-time $t\gg1$, our main results in Section \ref{Sec_Prof} show the first-order profile
\begin{align*}
	\psi(t,x)\sim\frac{\sin(|D|t)}{|D|}\mathrm{e}^{\frac{\delta}{2}\Delta t}\Psi_{1,2}(x)
\end{align*}
with $\Psi_{1,2}(x):=\psi_1(x)+\tau\psi_2(x)$, and the new second-order profile
\begin{align*}
	\psi(t,x)-\frac{\sin(|D|t)}{|D|}\mathrm{e}^{\frac{\delta}{2}\Delta t}\Psi_{1,2}(x)\sim -\frac{\delta(4\tau-\delta)}{8}t\Delta\cos(|D|t)\mathrm{e}^{\frac{\delta}{2}\Delta t}\Psi_{1,2}(x)+\cos(|D|t)\mathrm{e}^{\frac{\delta}{2}\Delta t}\Psi_{0,2}(x)
\end{align*}
with $\Psi_{0,2}(x):=\psi_0(x)-\tau^2\psi_2(x)$, of solution to the MGT equation \eqref{Eq_MGT} for any $n\geqslant 1$. The main approaches are asymptotic analysis and WKB analysis, whose ideas are sharply differ from \cite{Chen-Ikehata=2021}. In Section \ref{Sec_Prof}, we not only derive optimal estimates for the solution (and its time-derivatives) in some $\dot{H}^k$ norms, but also prove the optimality for the error terms. It implies sharp asymptotic profiles that we obtained, where our methodology is provided at the beginning of Section \ref{Sec_Prof}. Our derived theorems throughout this part may resolve the relation between linearized Kuznetsov's equation and the MGT equation as $t\gg1$ (see the statement in Remark \ref{Remark_Linear_Profi}).

The further purpose of the paper is to study global (in time) qualitative properties of solution to the next nonlinear JMGT equation:
\begin{align}\label{JMGT_Dissipative}
	\begin{cases}
		\displaystyle{\tau \psi_{ttt}+\psi_{tt}-\Delta \psi-(\delta+\tau)\Delta \psi_t=\partial_t\left(\frac{B}{2A}(\psi_t)^2+|\nabla\psi|^2\right),}&x\in\mb{R}^n,\ t>0,\\
		\psi(0,x)=\psi_0(x),\ \psi_t(0,x)=\psi_1(x),\ \psi_{tt}(0,x)=\psi_2(x),&x\in\mb{R}^n,
	\end{cases}
\end{align}
with $\tau>0$, $\delta>0$ also, and $B/A>0$. Our major concerns are global (in time) existence of Sobolev solution for any $n\geqslant 1$, as well as asymptotic profiles for large-time. In Section \ref{Section_GESDS_JMGT}, basing on the sharp estimates for the linearized model \eqref{Eq_MGT}, and manufacturing relevant time-weighted Sobolev spaces, we conclude the existence and estimates of global (in time) small data Sobolev solution. The problematic of the proof is to close contraction estimates for lower dimensional case ($n=1,2$) because of the higher-order time-derivative $\psi_t\psi_{tt}$ in the nonlinear portion. To overcome this difficulty, we shall reformulate a new decomposition for nonlinearity. Especially, our proof is straightforward (some tools in Harmonic Analysis are utilized) so that we also can settle several closely related models in nonlinear acoustics (see Remark \ref{Rem_Other_Model}). Let us turn to the last goal, namely, asymptotic profiles of solutions to the JMGT equation \eqref{JMGT_Dissipative}. Setting down some profiles of diffusion-waves in Section \ref{Sec-4}, with the aid of Duhamel's principle proved in global (in time) existence part, we derive first- and second-order profiles of solutions to \eqref{JMGT_Dissipative}.

In order to complete this work, we derive singular limit with respect to the thermal relaxation $\tau$ for the MGT equation \eqref{Eq_MGT} in Section \ref{Section_Final_Remark} as a concluding remark. It provides a global (in time) convergence result in the $L^p$ framework (with $2\leqslant p\leqslant\infty$) on the acoustic velocity potential for any $n\geqslant 1$, which improved \cite[Theorem 4.1]{Chen-Ikehata=2021} by different approaches. 

Our main contributions in the present paper consist in obtaining first- and second-order large-time profiles of solutions (see Theorems \ref{Thm_First_Order_Prof} and \ref{Thm_Second_Order_Prof}), and in optimal estimates of solutions and error terms (see Theorem \ref{thm:2.3}) to the linear MGT equation; in proving global (in time) existence of Sobolev solution (see Theorem \ref{Thm_GESDS}), and in deriving approximations of global (in time) solution (see Theorems \ref{thm:4.1} and \ref{thm:4.2})  to the nonlinear JMGT equation in the viscous case.

\subsection{Basic notations}\label{subsection:Notation}
We define the following zones of the Fourier space:
\begin{align*}
\ml{Z}_{\intt}(\varepsilon_0):=\{|\xi|\leqslant\varepsilon_0\ll1\},\ \ 
\ml{Z}_{\bdd}(\varepsilon_0,N_0):=\{\varepsilon_0\leqslant |\xi|\leqslant N_0\},\ \ 
\ml{Z}_{\extt}(N_0):=\{|\xi|\geqslant N_0\gg1\}.
\end{align*}
 The cut-off functions $\chi_{\intt}(\xi),\chi_{\bdd}(\xi),\chi_{\extt}(\xi)\in \mathcal{C}^{\infty}$ equipping their supports in the corresponding zones $\ml{Z}_{\intt}(\varepsilon_0)$, $\ml{Z}_{\bdd}(\varepsilon_0/2,2N_0)$ and $\ml{Z}_{\extt}(N_0)$, respectively, so that $\chi_{\bdd}(\xi)=1-\chi_{\extt}(\xi)-\chi_{\intt}(\xi)$ for all $\xi \in \mb{R}^n$.

 The symbol $f\lesssim g$ means that there exists a positive constant $C$ fulfilling $f\leqslant Cg$, which may be changed on different lines, analogously, for $f\gtrsim g$. We denote $(a)^+:=\max\{a,0\}$ to be the non-negative part of $a\in\mb{R}$. The notation $\circ$ represents the inner product in $\mathbb{R}^{n}$.

   Moreover, $\dot{H}^s_q$ with $s\geqslant0$ and $1\leqslant q<\infty$, denote Riesz potential spaces based on $L^q$. Furthermore, $|D|^s$ with $s\geqslant0$ stands for the pseudo-differential operator with symbol $|\xi|^s$. We recall a weighted $L^1$ space such that $L_1^1:=\{ f\in L^1:|x|f\in L^1\}$,
and some related quantities for initial datum by
\begin{align*}
	M_j:=\int_{\mb{R}^n}\psi_j(x)\mathrm{d}x\ \ \mbox{as well as}\ \ P_j:=\int_{\mb{R}^n}x\psi_j(x)\mathrm{d}x,
\end{align*}
with $j=0,1,2$. Let us introduce Sobolev spaces endowing additionally $L^1$ regularity by 
\begin{align*}
	\ml{A}_s:=(H^{s+2}\cap L^1)\times(H^{s+1}\cap L^1)\times (H^{s}\cap L^1),
\end{align*}
 which will  be frequently used for some $s\geqslant0$.

Finally, we shall collect some time-dependent functions, which will be used in some estimates for solutions as follows:
   \begin{align*}
   	\ml{D}_n(t):=\begin{cases}
   		(1+t)^{\frac{1}{2}}&\mbox{if} \ \ n=1,\\
   		\sqrt{\ln (\mathrm{e}+t)}&\mbox{if}\ \ n=2,\\
   		(1+t)^{\frac{1}{2}-\frac{n}{4}}&\mbox{if} \ \ n\geqslant 3,
   	\end{cases}
\ \ \mbox{and} \ \
	\ml{D}_{n,s}(t):=\begin{cases}
		\ml{D}_{n}(t) &\mbox{if} \ \ s=0,\\
		(1+t)^{-\frac{s-1}{2}-\frac{n}{4}}&\mbox{if} \ \ s \geqslant  1.
	\end{cases}
   \end{align*}

\section{Asymptotic profiles and optimal estimates for the MGT equation}\label{Sec_Prof}
This section contributes to refined behaviors of solutions to the MGT equation \eqref{Eq_MGT}. Let us use the partial Fourier transform with respect to spatial variables  for the Cauchy problem \eqref{Eq_MGT} which yields
\begin{align}\label{Eq_MGT_Fourier}
	\begin{cases}
		\tau\widehat{\psi}_{ttt}+\widehat{\psi}_{tt}+(\delta+\tau)|\xi|^2\widehat{\psi}_t+|\xi|^2\widehat{\psi}=0,&\xi\in\mb{R}^n,\ t>0,\\
		\widehat{\psi}(0,\xi)=\widehat{\psi}_0(\xi),\ \widehat{\psi}_t(0,\xi)=\widehat{\psi}_1(\xi),\ \widehat{\psi}_{tt}(0,\xi)=\widehat{\psi}_2(\xi),&\xi\in\mb{R}^n.
	\end{cases}
\end{align}
The corresponding characteristic root $\lambda=\lambda(|\xi|)$ complies with
\begin{align}\label{Cubic_Eq}
	\tau\lambda^3+\lambda^2+(\delta+\tau)|\xi|^2\lambda+|\xi|^2=0.
\end{align}

Later, we will discuss higher-order asymptotic behaviors of the solution $\widehat{\psi}=\widehat{\psi}(t,\xi)$ to the model \eqref{Eq_MGT_Fourier}  for small frequencies $\xi\in\ml{Z}_{\intt}(\varepsilon_0)$ only. Concerning the other cases, according to the derived estimates in \cite{Pellicer-Said-2017,Chen-Ikehata=2021}, we claim that
\begin{align*}
	\big(1-\chi_{\intt}(\xi)\big)|\widehat{\psi}(t,\xi)|\lesssim \big(1-\chi_{\intt}(\xi)\big)\mathrm{e}^{-ct}\left(|\widehat{\psi}_0(\xi)|+|\xi|^{-1}|\widehat{\psi}_1(\xi)|+|\xi|^{-2}|\widehat{\psi}_2(\xi)|\right)
\end{align*}
with a positive constant $c>0$. The last inequality indicates exponential decay estimates holding for $\xi\in\ml{Z}_{\bdd}(\varepsilon_0,N_0)\cup\ml{Z}_{\extt}(N_0)$ with suitable regularities of initial data. Hence, the time-dependent coefficients of estimates (e.g. decay rate) for solutions heavily rely on small frequency part.
\medskip

\noindent\textbf{Methodology of Optimal Estimates:} In order to achieve our last aim of this section that is optimal estimates, namely, the same behaviors for upper \& lower bounds as $t\gg1$, for
\begin{align}\label{AIM}
	\|\partial_t^{\ell}\psi(t,\cdot)\|_{\dot{H}^k}\ \ \mbox{as well as}\ \ \|\partial_t^{\ell}\psi(t,\cdot)-\mbox{``first-order profiles''}\|_{\dot{H}^k}
\end{align}
with  $\ell=0,1,2$ and some $k\geqslant0$, our procedure will be separated into several steps.
\begin{description}
	\item[Step 1.] By refined WKB analysis for $|\xi|\ll 1$, three characteristic roots will be depicted intensively in Proposition \ref{Prop_Expansion_01}, specifically, even/odd powers of the factor $|\xi|$ will be clarified that improves \cite[Proposition 2.1]{Chen-Ikehata=2021}.
	\item[Step 2.] According to representation of solution and asymptotic behaviors in Step 1, we introduce two auxiliary functions $\widehat{\ml{G}}_{1,2}(t,\xi)$ in \eqref{auxilary_01} and \eqref{auxilary_02}, respectively. They bridge the relation between solution's kernels and diffusion-wave kernels in Propositions \ref{prop:2.4} as well as \ref{prop:2.5}. Basing on these bridges, first-/second-order asymptotic profiles will be concluded in Theorems \ref{Thm_First_Order_Prof} and \ref{Thm_Second_Order_Prof}.
	\item[Step 3a.] Motivated by the asymptotic profiles in Step 2, we will establish approximated functions and claim the approximations in Propositions \ref{prop:2.6} as well as \ref{prop:2.7}. By finely Fourier analysis for small frequencies, the optimal estimates for the constructed approximated functions will be achieved in Proposition \ref{prop:2.8}.
	\item[Step 3b.]  Finally, with the aid of the derived approximations in Step 3a, and derived optimal estimates for the approximated functions, we arrive at the optimal estimates of solutions in \eqref{AIM} due to the triangle inequality. Roughly speaking, we will employ
	\begin{align*}
		\|\Phi\|_{\dot{H}^k}\geqslant\underbrace{\|\chi_{\intt}(D)\Phi_{\mathrm{Prof}}\|_{\dot{H}^k}}_{\text{from Step 3a}}-\underbrace{\|\chi_{\intt}(D)(\Phi-\Phi_{\mathrm{Prof}})\|_{\dot{H}^k}}_{\text{from Step 2, faster decay}}\gtrsim\underbrace{\|\chi_{\intt}(D)\Phi_{\mathrm{Prof}}\|_{\dot{H}^k}}_{\text{from Step 3b}}\sim\mbox{our aim},
	\end{align*}
as $t\gg1$, in which the function $\Phi$ is our target, and $\Phi_{\mathrm{Prof}}$ denotes its corresponding profile obtained in Step 1.
\end{description}

\subsection{Preliminaries: Estimates of solutions with additionally $L^1$ initial data}\label{Sub_Sec_Extend}
Hereinafter, we will prepare several $L^2-L^2$ type estimates and $(L^2\cap L^1)-L^2$ type estimates as preliminaries for investigating qualitative properties of solutions to the nonlinear problem \eqref{JMGT_Dissipative} in the forthcoming sections.  According to the recent paper \cite{Chen-Ikehata=2021}, in the case of pairwise distinct eigenvalues $\lambda_j$ (definitely true for $|\xi|\leqslant\varepsilon\ll 1$), the solution to the Cauchy problem \eqref{Eq_MGT_Fourier} can be expressed via
\begin{align}\label{Representation_Fourier_u}
	\widehat{\psi} (t,\xi)=\widehat{K}_0(t,|\xi|)\widehat{\psi}_0 (\xi)+\widehat{K}_1(t,|\xi|)\widehat{\psi}_1 (\xi)+\widehat{K}_2(t,|\xi|)\widehat{\psi}_2 (\xi),
\end{align}
where the kernels in the Fourier space have the representations
\begin{align*}
	\widehat{K}_0(t,|\xi|)&:=\sum\limits_{j=1,2,3}\frac{\exp\big(\lambda_j(|\xi|)t\big)\prod_{k=1,2,3,\ k\neq j}\lambda_k(|\xi|)}{\prod_{k=1,2,3,\ k\neq j}\big(\lambda_j(|\xi|)-\lambda_k(|\xi|)\big)},\\
	\widehat{K}_1(t,|\xi|)&:=-\sum\limits_{j=1,2,3}\frac{\exp\big(\lambda_j(|\xi|)t\big)\sum_{k=1,2,3,\ k\neq j}\lambda_k(|\xi|)}{\prod_{k=1,2,3,\ k\neq j}\big(\lambda_j(|\xi|)-\lambda_k(|\xi|)\big)},\\
	\widehat{K}_2(t,|\xi|)&:=\sum\limits_{j=1,2,3}\frac{\exp\big(\lambda_j(|\xi|)t\big)}{\prod_{k=1,2,3,\ k\neq j}\big(\lambda_j(|\xi|)-\lambda_k(|\xi|)\big)},
\end{align*}
where $\lambda_j(|\xi|)$ solves the cubic \eqref{Cubic_Eq} for $j=1,2,3$. From the asymptotic expansions and stability analysis in the context of \cite{Pellicer-Said-2017,Chen-Ikehata=2021}, it is actually automatic that
\begin{itemize}
	\item $\lambda_{1}(|\xi|)=-\frac{1}{\tau}+\delta|\xi|^2+\ml{O}(|\xi|^3),\ \lambda_{2,3}(|\xi|)=\pm i|\xi|-\frac{\delta}{2}|\xi|^2+\ml{O}(|\xi|^3)$ if $|\xi|\to0$;
	\item $\lambda_1(|\xi|)=-\frac{1}{\delta+\tau}+\ml{O}(|\xi|^{-1}), \
	\lambda_{2,3}(|\xi|)=\pm i\sqrt{\frac{\delta+\tau}{\tau}}|\xi|-\frac{\delta}{2\tau(\delta+\tau)}+\ml{O}(|\xi|^{-1})$ if $|\xi|\to\infty$;
	\item $\Re\lambda_{j}(|\xi|)<0$ for $j=1,2,3$ if $|\xi|\not\to0$ and $|\xi|\not\to\infty$, i.e. bounded frequencies.
\end{itemize}
\begin{remark}
	The case for multiple eigenvalues, i.e. the zero-discriminant, appears only when $\xi\in\ml{Z}_{\bdd}(\varepsilon_0,N_0)$. Although the solution formula \eqref{Representation_Fourier_u} associated with the corresponding kernels is valid in the case of pairwise distinct eigenvalues, we will just use this formula \eqref{Representation_Fourier_u} for $\xi\in\ml{Z}_{\intt}(\varepsilon_0)\cup\ml{Z}_{\extt}(N_0)$ that three roots to \eqref{Cubic_Eq} are pairwise distinct (see more detail in \cite[Section 4]{Pellicer-Said-2017} or  \cite[Section 2.1]{Chen-Ikehata=2021}). Concerning the case for multiple eigenvalues, the exponential stability of solutions when $\delta>0$ is favorable for deriving some pointwise estimates when $\xi\in\ml{Z}_{\bdd}(\varepsilon_0,N_0)$.
\end{remark}

Let us analyze some asymptotic behaviors of the Fourier multipliers as $|\xi|\to0$ and $|\xi|\to\infty$, respectively. We may underline that $\widehat{K}_2(t,|\xi|)$ has been estimated in an appropriate way by  \cite[Section 2]{Chen-Ikehata=2021}. By direct computations, one derives
\begin{align*}
	\chi_{\intt}(\xi)|\partial_t^{\ell}\widehat{K}_0(t,|\xi|)|&\lesssim\chi_{\intt}(\xi)\left(|\xi|^{\ell}\mathrm{e}^{-c|\xi|^2t}+|\xi|^2\mathrm{e}^{-ct}\right)\ \ \mbox{if}\ \ \ell=0,1,2,\\
	\chi_{\intt}(\xi)|\partial_t^{\ell}\widehat{K}_1(t,|\xi|)|&\lesssim\begin{cases}
		\chi_{\intt}(\xi)\left(|\xi|^{-1}|\sin(|\xi|t)|\mathrm{e}^{-c|\xi|^2t}+|\xi|^2\mathrm{e}^{-ct}\right)&\mbox{if}\ \ \ell=0,\\
		\chi_{\intt}(\xi)\left(|\xi|^{\ell-1}\mathrm{e}^{-c|\xi|^2t}+|\xi|^2\mathrm{e}^{-ct}\right)&\mbox{if}\ \ \ell=1,2,
	\end{cases}\\
	\chi_{\intt}(\xi)|\partial_t^{\ell}\widehat{K}_2(t,|\xi|)|&\lesssim\begin{cases}
		\chi_{\intt}(\xi)\left(|\xi|^{-1}|\sin(|\xi|t)|\mathrm{e}^{-c|\xi|^2t}+\mathrm{e}^{-ct}\right)&\mbox{if}\ \ \ell=0,\\
		\chi_{\intt}(\xi)\left(|\xi|^{\ell-1}\mathrm{e}^{-c|\xi|^2t}+\mathrm{e}^{-ct}\right)&\mbox{if}\ \ \ell=1,2.
	\end{cases}
\end{align*}
For another, by the same way as before, in the case $|\xi|\to\infty$, we obtain the next pointwise estimates: 
\begin{align*}
	\chi_{\extt}(\xi)|\partial_t^{\ell}\widehat{K}_0(t,|\xi|)|&\lesssim\chi_{\extt}(\xi)\max\left\{1,|\xi|^{\ell-1}\right\}\mathrm{e}^{-ct},\\
	\chi_{\extt}(\xi)|\partial_t^{\ell}\widehat{K}_1(t,|\xi|)|&\lesssim\chi_{\extt}(\xi)|\xi|^{\ell-1}\mathrm{e}^{-ct},\\
	\chi_{\extt}(\xi)|\partial_t^{\ell}\widehat{K}_2(t,|\xi|)|&\lesssim\chi_{\extt}(\xi)|\xi|^{\ell-2}\mathrm{e}^{-ct},
\end{align*}
for all $\ell=0,1,2$. Clearly, thanks to negative real parts of eigenvalues in the bounded frequency zone, we are able to get
\begin{align*}
	\chi_{\bdd}(\xi)\left(|\partial_t^{\ell}\widehat{K}_0(t,|\xi|)|+|\partial_t^{\ell}\widehat{K}_1(t,|\xi|)|+|\partial_t^{\ell}\widehat{K}_2(t,|\xi|)|\right)\lesssim \chi_{\bdd}(\xi)\mathrm{e}^{-ct}
\end{align*}
with some positive constants $c>0$.

\begin{prop}\label{Prop_Estimate_Solution_itself}
	Let $(\psi_0 ,\psi_1 ,\psi_2 )\in\ml{A}_s$ for $s\in[0,\infty)$. Then, the solution to the Cauchy problem \eqref{Eq_MGT} fulfills the following estimates:
	\begin{align*}
		\|\psi (t,\cdot)\|_{L^2 }&\lesssim\ml{D}_{n}(t)\|(\psi_0 ,\psi_1 ,\psi_2 )\|_{(L^2 \cap L^1 )^3},\\
		\|\,|D|\psi(t,\cdot)\|_{L^2}&\lesssim (1+t)^{-\frac{n}{4}}\|(\psi_0 ,\psi_1 ,\psi_2 )\|_{(\dot{H}^1\cap L^1)\times(L^2 \cap L^1 )^2},\\
		\|\,|D|^{s+2}\psi (t,\cdot)\|_{L^2 }&\lesssim(1+t)^{-\frac{1}{2}}\|(\psi_0 ,\psi_1 ,\psi_2 )\|_{\dot{H}^{s+2}\times \dot{H}^{s+1} \times \dot{H}^{s} },\\
		\|\,|D|^{s+2}\psi (t,\cdot)\|_{L^2 }&\lesssim(1+t)^{-\frac{1}{2}-\frac{s}{2}-\frac{n}{4}}\|(\psi_0 ,\psi_1 ,\psi_2 )	\|_{(\dot{H}^{s+2} \cap L^1 )\times(\dot{H}^{s+1} \cap L^1 )\times (\dot{H}^{s} \cap L^1)},
	\end{align*}
	where the time-dependent coefficients were defined in Subsection \ref{subsection:Notation}.
\end{prop}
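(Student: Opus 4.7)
The plan is to work entirely in Fourier variables via the representation \eqref{Representation_Fourier_u}, apply Plancherel's theorem, and split the frequency space into the three zones $\ml{Z}_{\intt}(\varepsilon_0)$, $\ml{Z}_{\bdd}(\varepsilon_0,N_0)$, $\ml{Z}_{\extt}(N_0)$ via the cut-offs $\chi_{\intt},\chi_{\bdd},\chi_{\extt}$. On the bounded and exterior zones every one of the four stated estimates is immediate from the pointwise kernel bounds displayed just above the statement: each $\widehat{K}_j$ decays like $\mathrm{e}^{-ct}$ (possibly against polynomial factors in $|\xi|$ that are absorbed by the Sobolev regularity of the data on $\ml{Z}_{\extt}(N_0)$), which beats any polynomial rate. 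All the genuine work therefore takes place on $\ml{Z}_{\intt}(\varepsilon_0)$, where the characteristic roots give a diffusion-wave structure.

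For the three estimates that carry $L^1$ data (items 1, 2, 4), I would peel off $\|\widehat{\psi}_j\|_{L^\infty} \leqslant \|\psi_j\|_{L^1}$ out of the $L^2(\xi)$ frequency integral and reduce to computing the squared $L^2$ norm over $\ml{Z}_{\intt}(\varepsilon_0)$ of the multiplier $|\xi|^k \widehat{K}_j$, with $k=0$, $k=1$, $k=s+2$ respectively, using the interior bounds $|\widehat{K}_0|\lesssim \mathrm{e}^{-c|\xi|^2 t}$ and $|\widehat{K}_{1,2}|\lesssim |\xi|^{-1}|\sin(|\xi|t)|\mathrm{e}^{-c|\xi|^2 t}$ (plus harmless exponentially small pieces). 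After the substitution $\eta=|\xi|\sqrt{t}$ these multiplier integrals take the form $t^{-n/2-k}\int_0^{\varepsilon_0\sqrt{t}}\eta^{n-1+2k}\,(\text{rescaled kernel})^2\,\mathrm{d}\eta$. For estimates 2 and 4 the weight $|\xi|^{2k}$ with $k\geqslant 1$ kills the $|\xi|^{-2}$ singularity of $|\widehat{K}_{1,2}|^2$ at the origin, making the rescaled integral converge uniformly in $t$ and producing the dimension-free rates $(1+t)^{-n/4}$ and $(1+t)^{-1/2-s/2-n/4}$ respectively. For the pure $\dot{H}^{s+2}$ estimate (item 3), Plancherel is applied without any $L^\infty$ pullout: the key input is the uniform symbol bound $|\xi|^{\alpha_j}|\widehat{K}_j|\lesssim (1+t)^{-1/2}$ obtained by maximising $|\xi|^{\alpha_j}\mathrm{e}^{-c|\xi|^2 t}$ in $|\xi|$, distributing the weight $|\xi|^{s+2}$ between kernel and data so that each residue lands in the matching $\dot{H}$ norm.

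The main obstacle is estimate 1, where the weight $|\xi|^{2k}$ is absent ($k=0$) and the residual interior integral
\[
\int_0^{\varepsilon_0} r^{n-3}\sin^2(rt)\,\mathrm{e}^{-c r^2 t}\,\mathrm{d}r
\]
is genuinely dimension-sensitive at the origin: it must produce $t$ for $n=1$, $\log(\mathrm{e}+t)$ for $n=2$, and $t^{1-n/2}$ for $n\geqslant 3$ in order to reproduce $\ml{D}_n(t)^2$. A split at an intermediate scale near $r\sim t^{-1/2}$ using $|\sin(rt)|\leqslant \min(rt,1)$ handles all three regimes simultaneously; the logarithm in $n=2$ appears precisely from the marginal integrability of $r^{-1}$ against $\mathrm{e}^{-cr^2 t}$ near the transition scale, while the algebraic rate in $n\geqslant 3$ comes from the integrability of $r^{n-3}\mathrm{e}^{-cr^2 t}$ at the origin after rescaling.
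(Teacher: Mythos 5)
Your treatment of the first, second and fourth estimates is essentially the paper's own argument: Plancherel/Hausdorff--Young plus H\"older with $\|\widehat{\psi}_j\|_{L^\infty}\leqslant\|\psi_j\|_{L^1}$, the sharp multiplier integrals $\|\chi_{\intt}(\xi)|\xi|^{\sigma}\mathrm{e}^{-c|\xi|^2t}\|_{L^2}\lesssim(1+t)^{-\frac{\sigma}{2}-\frac{n}{4}}$ and $\|\chi_{\intt}(\xi)|\xi|^{-1}|\sin(|\xi|t)|\mathrm{e}^{-c|\xi|^2t}\|_{L^2}\lesssim\ml{D}_n(t)$ (your split at $r\sim t^{-1/2}$ is exactly how the latter is obtained), and exponential decay on the bounded and exterior zones with the Sobolev regularity of the data absorbing the polynomial symbol growth there.

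However, your plan for the third estimate contains a step that fails. You propose to distribute $|\xi|^{s+2}$ so that ``each residue lands in the matching $\dot H$ norm'' and to bound the leftover symbol in $L^\infty$ by $(1+t)^{-1/2}$. For the $\widehat{K}_2\widehat{\psi}_2$ piece this works (the leftover is $|\xi|^2\widehat{K}_2\lesssim|\xi|\mathrm{e}^{-c|\xi|^2t}\lesssim t^{-1/2}$ on $\ml{Z}_{\intt}(\varepsilon_0)$), but for the $\widehat{K}_0\widehat{\psi}_0$ and $\widehat{K}_1\widehat{\psi}_1$ pieces the leftover symbols are $\widehat{K}_0$ and $|\xi|\widehat{K}_1$, whose leading low-frequency parts are $\cos(\mu_{\mathrm{I}}t)\mathrm{e}^{\mu_{\mathrm{R}}t}$ and $\sin(\mu_{\mathrm{I}}t)\mathrm{e}^{\mu_{\mathrm{R}}t}$; at frequencies $|\xi|\sim t^{-1}$ these are of size one, so their sup over $\ml{Z}_{\intt}(\varepsilon_0)$ does not decay in $t$ at all, let alone like $(1+t)^{-1/2}$. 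Indeed, against purely homogeneous data norms the claimed low-frequency decay cannot be extracted by any splitting: data whose Fourier transform is concentrated in an annulus $|\xi|\sim t^{-1}$ make $\|\,|D|^{s+2}K_1(t,\cdot)\ast\psi_1\|_{L^2}$ comparable to $\|\psi_1\|_{\dot H^{s+1}}$. The paper's proof avoids this by pairing the $L^\infty$ symbol bound $\|\chi_{\intt}(\xi)|\xi|^{\sigma}\mathrm{e}^{-c|\xi|^2t}\|_{L^\infty}\lesssim(1+t)^{-\frac{\sigma}{2}}$ (with $\sigma=s+2$ for the $\psi_0$ term and $\sigma=s+1$ for the $\psi_1$ term, both $\geqslant1$) with the $L^2$ norm of $\widehat{\psi}_0,\widehat{\psi}_1$ on the small-frequency zone; that is, the $(1+t)^{-1/2}$ there is bought with the $L^2$ part of the hypothesis $(\psi_0,\psi_1,\psi_2)\in\ml{A}_s$, so the third estimate should be read (and can only be proved) with this inhomogeneous control of the data at low frequencies. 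You need to replace your matching-homogeneous-norm bookkeeping for item 3 by this $L^\infty$-symbol--times--$L^2$-data pairing on $\ml{Z}_{\intt}(\varepsilon_0)$.
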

\begin{proof}
To prove our desired results, we will simply use H\"older's inequality and the Hausdorff-Young inequality (see, for instance, \cite[Proof of Theorem 1.1]{Ikehata-Natsume=2012}). Then, associated with the sharp estimates
\begin{align}\label{Eq_05}
	\|\chi_{\intt}(\xi)|\xi|^s\mathrm{e}^{-c|\xi|^2t}\|_{L^2}&\lesssim (1+t)^{-\frac{s}{2}-\frac{n}{4}},\\
	\|\chi_{\intt}(\xi)|\xi|^s\mathrm{e}^{-c|\xi|^2t}\|_{L^{\infty}}&\lesssim (1+t)^{-\frac{s}{2}},\notag\\
	\|\chi_{\intt}(\xi)|\xi|^{-1}|\sin(|\xi|t)|\mathrm{e}^{-c|\xi|^2t}\|_{L^2}&\lesssim\ml{D}_n(t),\notag
\end{align}
with $s\in[0,\infty)$, and exponential decay estimates for bounded and large frequencies, we follow the same approaches as the proofs of  \cite[Theorems 2.1 and 2.2]{Chen-Ikehata=2021} to complete the derivations of all estimates. Here, the requirements of regularity for initial datum come from the derived pointwise estimates in the Fourier space when $|\xi|\gg 1$.
\end{proof}

\begin{prop}\label{Prop_Estimate_Time_Derivative}
	Let $(\psi_0 ,\psi_1 ,\psi_2 )\in\ml{A}_s $ with $s\in[0,\infty)$.	Then, the time-derivatives of the solution to the Cauchy problem \eqref{Eq_MGT} fulfill the following estimates for $\ell=1,2$:
	\begin{align}
		\|\partial_t^{\ell}\psi (t,\cdot)\|_{L^2 }&\lesssim(1+t)^{\frac{1}{2}-\frac{\ell}{2}}\|(\psi_0 ,\psi_1 ,\psi_2 )\|_{(\dot{H}^{\ell-1} )^2\times L^2} ,\label{Est_05}\\
		\|\partial_t^{\ell}\psi (t,\cdot)\|_{L^2 }&\lesssim(1+t)^{\frac{1}{2}-\frac{\ell}{2}-\frac{n}{4}}\|(\psi_0 ,\psi_1 ,\psi_2 )\|_{(\dot{H}^{\ell-1}\cap L^1 )^2\times (L^2\cap L^1) },\label{Est_04}\\
		\||D| \partial_t \psi (t,\cdot)\|_{L^2 }&\lesssim(1+t)^{-\frac{1}{2}-\frac{n}{4}}\|(\psi_0 ,\psi_1 ,\psi_2 )\|_{(\dot{H}^{1}\cap L^1 )^2\times (L^2\cap L^1) },\label{eq:16}\\
		\|\,|D|^{s+2-\ell}\partial_t^{\ell}\psi (t,\cdot)\|_{L^2 }&\lesssim(1+t)^{-\frac{1}{2}}\|(\psi_0 ,\psi_1 ,\psi_2 )\|_{(\dot{H}^{s+1} )^2\times \dot{H}^{s} },\label{Est_09}\\
		\|\,|D|^{s+2-\ell}\partial_t^{\ell}\psi (t,\cdot)\|_{L^2 }&\lesssim(1+t)^{-\frac{1}{2}-\frac{s}{2}-\frac{n}{4}}\|(\psi_0 ,\psi_1 ,\psi_2 )\|_{(\dot{H}^{s+1}\cap L^1 )^2\times (\dot{H}^{s} \cap L^1) }.\label{Est_07}
	\end{align}
\end{prop}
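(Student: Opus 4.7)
The plan is to work in the Fourier side via the representation \eqref{Representation_Fourier_u}, differentiate $\ell$ times in $t$, and apply the pointwise bounds on $\partial_t^{\ell}\widehat{K}_j(t,|\xi|)$ collected just before Proposition \ref{Prop_Estimate_Solution_itself}, splitting systematically into the three zones $\ml{Z}_{\intt}(\varepsilon_0)$, $\ml{Z}_{\bdd}(\varepsilon_0,N_0)$ and $\ml{Z}_{\extt}(N_0)$. The overall strategy mirrors the proof of Proposition \ref{Prop_Estimate_Solution_itself}: Plancherel's identity for pure $L^2$ data, and the Hausdorff--Young bound $\|\widehat{\psi}_j\|_{L^\infty}\lesssim\|\psi_j\|_{L^1}$ combined with the auxiliary estimates \eqref{Eq_05} whenever $L^1$-data is available.

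The decisive contribution comes from small frequencies. Noting that, for $\ell=1,2$, the given pointwise estimates read $|\partial_t^\ell \widehat{K}_0|\lesssim|\xi|^\ell\mathrm{e}^{-c|\xi|^2t}$ while $|\partial_t^\ell\widehat{K}_1|,|\partial_t^\ell\widehat{K}_2|\lesssim|\xi|^{\ell-1}\mathrm{e}^{-c|\xi|^2t}$ (modulo an exponentially decaying remainder that is harmless), multiplication by $|\xi|^{s+2-\ell}$ produces multipliers of order $|\xi|^{s+2}\mathrm{e}^{-c|\xi|^2t}$ and $|\xi|^{s+1}\mathrm{e}^{-c|\xi|^2t}$. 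To obtain \eqref{Est_09} I would factor out $|\xi|^{s+1}$ (respectively $|\xi|^s$) to be absorbed into the $\dot{H}^{s+1}$ (resp. $\dot{H}^s$) norm of the data, and estimate the residual $|\xi|\,\mathrm{e}^{-c|\xi|^2t}$ in $L^\infty$, which by \eqref{Eq_05} gives the rate $(1+t)^{-1/2}$. For the refined estimate \eqref{Est_07} I would instead bound $\widehat{\psi}_j(\xi)$ in $L^\infty$ by the $L^1$-norm of the datum and apply the $L^2$-bound in \eqref{Eq_05}, gaining the extra Gaussian-type factor $(1+t)^{-n/4}$. The remaining estimates \eqref{Est_05}, \eqref{Est_04} and \eqref{eq:16} follow the same template with $s=0$, $\ell=1,2$ and $k=0$ or $k=1$ respectively, simply relocating one power of $|\xi|$ between the multiplier and the data.

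For bounded frequencies, the negative real parts of all three roots yield a uniform $\mathrm{e}^{-ct}$ decay that dominates any polynomial factor, so these contributions are absorbed by the $L^2$-norms of the initial datum. In the large-frequency zone the pointwise estimates $|\partial_t^\ell\widehat{K}_0|\lesssim|\xi|^{(\ell-1)^+}\mathrm{e}^{-ct}$, $|\partial_t^\ell\widehat{K}_1|\lesssim|\xi|^{\ell-1}\mathrm{e}^{-ct}$ and $|\partial_t^\ell\widehat{K}_2|\lesssim|\xi|^{\ell-2}\mathrm{e}^{-ct}$ together with $\chi_{\extt}(\xi)|\xi|^{s+2-\ell}\partial_t^\ell\widehat{\psi}$ show that the $L^2$-norm is controlled by $\|\psi_0\|_{\dot{H}^{s+2}}+\|\psi_1\|_{\dot{H}^{s+1}}+\|\psi_2\|_{\dot{H}^s}$ up to an exponentially decaying factor, which matches exactly the regularity built into $\ml{A}_s$.

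The main difficulty is bookkeeping rather than analysis: each of the five displayed inequalities rests on a delicate redistribution of powers of $|\xi|$ between (i) the homogeneous Sobolev norm carried by the datum, (ii) a residual $|\xi|$-power measured in $L^\infty$ or $L^2$ against the Gaussian factor, and (iii) the time rate harvested from \eqref{Eq_05}. One should also verify carefully that the oscillatory $|\xi|^{-1}\sin(|\xi|t)$ factor in $\widehat{K}_1$ and $\widehat{K}_2$ for $\ell=0$ disappears after differentiating once in $t$, since this is precisely the reason why no $\ml{D}_n(t)$-type log or square-root factor persists in \eqref{Est_05}--\eqref{Est_07}; missing this cancellation would degrade the claimed rates in low dimensions.
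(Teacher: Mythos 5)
Your proposal is correct and follows essentially the same route as the paper: the paper's own proof of this proposition is precisely to reuse the pointwise Fourier estimates for $\partial_t^{\ell}\widehat{K}_j$, Plancherel/H\"older together with the Hausdorff--Young bound $\|\widehat{\psi}_j\|_{L^\infty}\lesssim\|\psi_j\|_{L^1}$, and the auxiliary estimates \eqref{Eq_05}, zone by zone, exactly as in Proposition \ref{Prop_Estimate_Solution_itself}, with \eqref{eq:16} read off as the case $s=0$, $\ell=1$ of \eqref{Est_07}. One small bookkeeping correction: in the exterior zone the bound $|\partial_t^{\ell}\widehat{K}_0|\lesssim\max\{1,|\xi|^{\ell-1}\}\mathrm{e}^{-ct}$ combined with the multiplier $|\xi|^{s+2-\ell}$ only consumes $\|\psi_0\|_{\dot{H}^{s+1}}$ (not $\dot{H}^{s+2}$ as you wrote), which is exactly what makes the right-hand sides of \eqref{Est_09} and \eqref{Est_07} attainable as stated.
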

\begin{proof}
	With the aid of the derived pointwise estimates, and the same philosophy as the proof of Proposition \ref{Prop_Estimate_Solution_itself}, one can complete the proof. Particularly, \eqref{eq:16} is a special case of \eqref{Est_07} with $s=0$ as well as $\ell=1$.
\end{proof}
By repeating the same procedure as Proposition \ref{Prop_Estimate_Time_Derivative} and using \eqref{Eq_05} under
\begin{align*}
|\partial_t^3\widehat{K}_2(t,|\xi|)|\lesssim\chi_{\intt}(\xi)|\xi|^2\mathrm{e}^{-c|\xi|^2t}+\big(1-\chi_{\intt}(\xi)\big)|\xi|\mathrm{e}^{-ct},
\end{align*} 
 we have the following corollary, which is essential to the study for one-dimensional nonlinear problem.
\begin{coro}\label{coro_Estimate_Time_Derivative}
	Let $\psi_2\in H^{s+1}\cap L^1$  with $s\in[0,\infty)$.
Then, the time-derivatives of the third kernel fulfill the following estimates for $\ell=0,1,2$:
	\begin{align}
		\|\,|D|^{s+2-\ell}(\partial_t^{\ell+1} K_2)(t,\cdot)\ast \psi_2(\cdot)  \|_{L^2 }&\lesssim(1+t)^{-1-\frac{s}{2}-\frac{n}{4}}\| \psi_2 \|_{\dot{H}^{s+1} \cap L^{1} },\label{eq:2.3}\\
		\|\,|D|^{s+2-\ell}(\partial_t^{\ell+1} K_2)(t,\cdot)\ast \psi_2(\cdot) \|_{L^2 }&\lesssim(1+t)^{-\frac{1}{2}}\| \psi_2\|_{\dot{H}^{s+1}}.\label{eq:2.4}
	\end{align}
\end{coro}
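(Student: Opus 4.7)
The plan is to follow the template of Propositions \ref{Prop_Estimate_Solution_itself} and \ref{Prop_Estimate_Time_Derivative}, i.e. Plancherel's identity plus a decomposition of $\mathbb{R}^n_\xi$ along the zones $\ml{Z}_{\intt}(\varepsilon_0)$, $\ml{Z}_{\bdd}(\varepsilon_0,N_0)$, $\ml{Z}_{\extt}(N_0)$. Explicitly, I would reduce the left-hand sides of \eqref{eq:2.3} and \eqref{eq:2.4} to estimating $\|\chi_\star(\xi)|\xi|^{s+2-\ell}\partial_t^{\ell+1}\widehat{K}_2(t,|\xi|)\widehat{\psi}_2(\xi)\|_{L^2_\xi}$ for $\star\in\{\intt,\bdd,\extt\}$ and $\ell\in\{0,1,2\}$, gathering the necessary pointwise bounds on $\partial_t^{\ell+1}\widehat{K}_2$ already listed in the text for $\ell=0,1$ together with the freshly supplied bound for $\ell=2$.

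For the small-frequency piece, these bounds uniformly give $\chi_{\intt}(\xi)|\partial_t^{\ell+1}\widehat{K}_2(t,|\xi|)|\lesssim\chi_{\intt}(\xi)\bigl(|\xi|^{\ell}\mathrm{e}^{-c|\xi|^2t}+\mathrm{e}^{-ct}\bigr)$, so multiplication by $|\xi|^{s+2-\ell}$ collapses the $\ell$-dependence and leaves the leading symbol $|\xi|^{s+2}\mathrm{e}^{-c|\xi|^2t}$. To prove \eqref{eq:2.3} I would invoke H\"older and Hausdorff-Young in the form $\|\chi_{\intt}|\xi|^{s+2}\mathrm{e}^{-c|\xi|^2t}\|_{L^2}\|\widehat{\psi}_2\|_{L^\infty}\lesssim(1+t)^{-1-s/2-n/4}\|\psi_2\|_{L^1}$ using the first line of \eqref{Eq_05} with exponent $s+2$. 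For \eqref{eq:2.4} I would instead peel off one power of $|\xi|$ onto the symbol and write the integrand as $(|\xi|\mathrm{e}^{-c|\xi|^2t})\cdot(|\xi|^{s+1}\widehat{\psi}_2)$; the $L^\infty$-bound $\|\chi_{\intt}|\xi|\mathrm{e}^{-c|\xi|^2t}\|_{L^\infty}\lesssim(1+t)^{-1/2}$ (which is \eqref{Eq_05} with $s=1$ read in the $L^\infty$ line) together with Plancherel then produces the target rate.

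On the bounded and exterior zones, the tabulated bounds for $\ell=0,1$ and the hint for $\ell=2$ all give $|\xi|^{s+2-\ell}|\partial_t^{\ell+1}\widehat{K}_2|\lesssim|\xi|^{s+1}\mathrm{e}^{-ct}$, whose contribution is controlled by $\mathrm{e}^{-ct}\|\psi_2\|_{\dot H^{s+1}}$ via Plancherel. This is dominated by both polynomial rates on the right-hand sides of \eqref{eq:2.3} and \eqref{eq:2.4}, and combining the three zones finishes the proof.

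The main (and only mildly delicate) point is the large-frequency analysis for $\ell=2$: the previously tabulated bound $\chi_{\extt}(\xi)|\partial_t^{\ell}\widehat{K}_2|\lesssim|\xi|^{\ell-2}\mathrm{e}^{-ct}$ was stated only up to $\ell=2$, and naively extrapolating it to $\ell=3$ would yield a symbol growing like $|\xi|\cdot|\xi|^s=|\xi|^{s+1}$ after multiplication by $|\xi|^{s+2-\ell}=|\xi|^s$, actually fine; but without the sharpened hint $|\partial_t^3\widehat{K}_2|\lesssim(1-\chi_{\intt}(\xi))|\xi|\mathrm{e}^{-ct}$ one could only assert growth like $|\xi|^2\mathrm{e}^{-ct}$ at high frequencies, forcing $\dot H^{s+2}$ regularity on $\psi_2$. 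The hint — which is precisely why the statement displays the improved $\partial_t^3\widehat{K}_2$ estimate immediately before the corollary — shaves off one power of $|\xi|$ and is the key ingredient that keeps the regularity requirement at $\dot H^{s+1}\cap L^1$ uniformly in $\ell$.
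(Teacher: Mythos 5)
Your proposal is correct and follows essentially the same route as the paper, whose proof simply repeats the procedure of Proposition \ref{Prop_Estimate_Time_Derivative} using \eqref{Eq_05} together with the stated pointwise bound on $\partial_t^3\widehat{K}_2$. One small caution: for $\ell=2$ in \eqref{eq:2.4} you should use that stated bound literally (it has no $\mathrm{e}^{-ct}$ term on $\ml{Z}_{\intt}(\varepsilon_0)$), since your over-uniformized small-frequency bound $\chi_{\intt}(\xi)\big(|\xi|^{\ell}\mathrm{e}^{-c|\xi|^2t}+\mathrm{e}^{-ct}\big)$ would leave a remainder of size $|\xi|^{s}\mathrm{e}^{-ct}|\widehat{\psi}_2(\xi)|$ near $\xi=0$, which the $\dot{H}^{s+1}$ norm alone does not control.
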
 
\subsection{Asymptotic analysis on the characteristic roots for small frequencies}
Although in the last subsection we mentioned some pointwise estimates and asymptotic expansions for the eigenvalues as $|\xi|\to0$ (primarily examined by \cite[Section 2]{Chen-Ikehata=2021}), it is not sufficient to figure out second-order asymptotic profiles of solution. To  expand the eigenvalues for second-order, we will apply another scheme different from the one in \cite{Chen-Ikehata=2021}.

Let us take the positive number $\varepsilon_0$ to be a small constant so that
\begin{align}\label{discri}
	\mbox{discriminant of \eqref{Cubic_Eq}}<\left((\delta+\tau)(\delta+19\tau)-27\tau^2\right)|\xi|^4-4|\xi|^2\leqslant-2|\xi|^2<0,
\end{align}
when the frequency fulfills
\begin{align*}
	|\xi|\leqslant \varepsilon_0\ll 1,\ \ \mbox{e.g.}\ \ \varepsilon_0=1/\big((\delta+\tau)(\delta+19\tau)-27\tau^2\big).
\end{align*}
In other words, the cubic \eqref{Cubic_Eq} has one real root $\lambda_1=\lambda_1(|\xi|)$ as well as two (complex) conjugate roots $\lambda_{2,3}=\lambda_{2,3}(|\xi|)$ owning the form $\lambda_{2,3}=\mu_{\mathrm{R}}\pm i\mu_{\mathrm{I}}$ with $\mu_{\mathrm{R}}=\mu_{\mathrm{R}}(|\xi|)\in\mb{R}$ and $\mu_{\mathrm{I}}=\mu_{\mathrm{I}}(|\xi|)\in\mb{R}$. Absolutely, the roots fulfill
\begin{align}\label{Relation}
	\lambda_1+2\mu_{\mathrm{R}}=-\frac{1}{\tau},\ \ 2\lambda_1\mu_{\mathrm{R}}+\mu_{\mathrm{R}}^2+\mu_{\mathrm{I}}^2=\frac{\delta+\tau}{\tau}|\xi|^2\ \ \mbox{and} \ \ \lambda_1\left(\mu_{\mathrm{R}}^2+\mu_{\mathrm{I}}^2\right)=-\frac{1}{\tau}|\xi|^2.
\end{align}
For this reason, the asymptotic behavior of $\lambda_1$ plays a crucial role in determining $\mu_{\mathrm{R}}$ and $\mu_{\mathrm{I}}$.

By denoting 
\begin{align*}
A(|\xi|):=\frac{\delta+\tau}{\tau}|\xi|^2-\frac{1}{3\tau^2}\ \ \mbox{and}\ \ B(|\xi|):=\frac{2\tau-\delta}{3\tau^2}|\xi|^2+\frac{2}{27\tau^3},
\end{align*}
with $\triangle_{\mathrm{D}}(|\xi|):=\frac{1}{4}B(|\xi|)^2+\frac{1}{27}A(|\xi|)^3$, we apply Cardano's formula to get the real root (for small frequencies)
\begin{align*}
	\lambda_1(|\xi|)=-\frac{1}{3\tau}+\sqrt[3]{-\frac{1}{2}B(|\xi|)+\sqrt{\triangle_{\mathrm{D}}(|\xi|)}}+\sqrt[3]{-\frac{1}{2}B(|\xi|)-\sqrt{\triangle_{\mathrm{D}}(|\xi|)}}.
\end{align*}
Some straightforward computations imply
\begin{align*}
	\triangle_{\mathrm{D}}(|\xi|)=\frac{|\xi|^2}{27\tau^4}\left(1+\left(2\tau^2-5\tau\delta-\frac{1}{4}\delta^2\right)|\xi|^2+\tau(\delta+\tau)^3|\xi|^4\right)=:\frac{|\xi|^2}{27\tau^4}\left(1+g(|\xi|^2)\right).
\end{align*}
With the benefit of Taylor's expansion, we immediately have
\begin{align*}
	\sqrt{\triangle_{\mathrm{D}}(|\xi|)}&=\frac{|\xi|}{3\sqrt{3}\tau^2}\left(1+\sum\limits_{j=1}^{\infty}\frac{1}{j!}\prod\limits_{\ell=1}^j\left(\frac{3}{2}-\ell\right)g(|\xi|^2)^j\right)=:\frac{|\xi|}{3\sqrt{3}\tau^2}\left(1+\tilde{g}(|\xi|^2)\right).
\end{align*}
Analogously, we may represent
\begin{align*}
	\sqrt[3]{-\frac{1}{2}B(|\xi|)\pm\sqrt{\triangle_{\mathrm{D}}(|\xi|)}}&=-\frac{1}{3\tau}\sqrt[3]{1\mp3\sqrt{3}\tau|\xi|+\frac{9}{2}\tau(2\tau-\delta)|\xi|^2\mp3\sqrt{3}\tau|\xi|\tilde{g}(|\xi|^2)}\\
	&=:-\frac{1}{3\tau}\sqrt[3]{1+h_{\pm}(|\xi|)},
\end{align*}
packing
\begin{align*}
	\sqrt[3]{1+h_{\pm}(|\xi|)}=1+\frac{1}{3}h_{\pm}(|\xi|)-\frac{1}{9}h_{\pm}(|\xi|)^2+\sum\limits_{m=3}^{\infty}\frac{1}{m!}\prod_{\ell=1}^m\left(\frac{4}{3}-\ell\right)h_{\pm}(|\xi|)^m.
\end{align*}
By calculating
\begin{align*}
	\sum\limits_{\pm}\left(1+\frac{1}{3}h_{\pm}(|\xi|)-\frac{1}{9}h_{\pm}(|\xi|)^2\right)=2-3\tau\delta|\xi|^2+\ml{O}(|\xi|^3),
\end{align*}
it is an easy matter to show
\begin{align*}
	\lambda_1(|\xi|)=-\frac{1}{\tau}+\delta|\xi|^2+\ml{O}(|\xi|^4).
\end{align*}
Furthermore, according to the relation \eqref{Relation}, we find
\begin{align}\label{First-order}
	\mu_{\mathrm{R}}(|\xi|)=-\frac{\delta}{2}|\xi|^2+\ml{O}(|\xi|^4)\ \ \mbox{and}\ \ \mu_{\mathrm{I}}(|\xi|)=\pm |\xi|+\ml{O}(|\xi|^3).
\end{align}
The dominant parts of the last expansions are coincide with those in \cite[Section 2]{Chen-Ikehata=2021}.

In order to characterize asymptotic behaviors of $\lambda_1(|\xi|)$ for small frequencies, we may process a deep expansion
\begin{align*}
	\sum\limits_{\pm}\sqrt[3]{1+h_{\pm}(|\xi|)}=2+3\tau(2\tau-\delta)|\xi|^2+\sum\limits_{m=2}^{\infty}\frac{1}{m!}\prod\limits_{\ell=1}^m\left(\frac{4}{3}-\ell\right)\sum\limits_{\pm}h_{\pm}(|\xi|)^m.
\end{align*}
Let us now focus on the term (with $m\geqslant 2$)
\begin{align*}
	h_{\pm}(|\xi|)^m&=\left(\mp3\sqrt{3}\tau|\xi|+\frac{9}{2}\tau(2\tau-\delta)|\xi|^2\mp3\sqrt{3}\tau|\xi|\tilde{g}(|\xi|^2) \right)^m\\
	&=\sum\limits_{j=0}^m {m \choose j}\left(\mp 3\sqrt{3}\tau|\xi|\right)^{m-j}\sum\limits_{k=0}^j{j\choose k}\left(\frac{9}{2}\tau(2\tau-\delta)|\xi|^2\right)^{j-k}\left(\mp3\sqrt{3}\tau|\xi|\tilde{g}(|\xi|^2)\right)^k,
\end{align*}
whose key parts are $(\mp|\xi|)^{m-j}|\xi|^{2(j-k)}(\mp|\xi|^3)^k$, namely,
\begin{align*}
	(\mp 1)^{m-j+k}|\xi|^{m+j+k}\ \ \mbox{for}\ \ j=0,\dots,m\ \ \mbox{and}\ \ k=0,\dots,j.
\end{align*}
If $m+j+k$ is an odd number, the value of $m-j+k$ is odd too due to the trivial fact that $2(m+k)$ is even. For this reason, the sign of coefficients for $|\xi|^{\mathrm{odd}}$ is converse with $\mp$, which immediately leads to some cancellations after taking the sum. Thus, there only exist $|\xi|^{\mathrm{even}}$ in the formula of $\lambda_1(|\xi|)$ such that
\begin{align*}
	\lambda_1(|\xi|)=-\frac{1}{\tau}+\delta|\xi|^2+\tau\delta(\delta-\tau)|\xi|^4+\sum\limits_{j=3}^{\infty}\lambda_1^{(j)}|\xi|^{2j},
\end{align*}
where $\lambda_1^{(j)}\in\mb{R}$ for $j\geqslant 3$. From the relation \eqref{Relation} again, it allows us to obtain
\begin{align*}
	\mu_{\mathrm{R}}(|\xi|)=-\frac{\delta}{2}|\xi|^2-\frac{\tau\delta(\delta-\tau)}{2}|\xi|^4+\sum\limits_{j=3}^{\infty}\frac{\lambda_1^{(j)}}{2}|\xi|^{2j},
\end{align*}
and
\begin{align*}
	-\frac{1}{\tau}|\xi|^2&=\left(-\frac{1}{\tau}+\delta|\xi|^2+\tau\delta(\delta-\tau)|\xi|^4+\sum\limits_{j=3}^{\infty}\lambda_1^{(j)}|\xi|^{2j}\right)\\
	&\quad\,\, \times \left(\left(-\frac{\delta}{2}|\xi|^2-\frac{\tau\delta(\delta-\tau)}{2}|\xi|^4+\sum\limits_{j=3}^{\infty}\frac{\lambda_1^{(j)}}{2}|\xi|^{2j}\right)^2+\mu_{\mathrm{I}}(|\xi|)^2\right).
\end{align*}
Due to the situation that in the last equality all terms contain $|\xi|^{\mathrm{even}}$, basing on \eqref{First-order} we claim
\begin{align*}
	\mu_{\mathrm{I}}(|\xi|)= |\xi|+\frac{\delta(4\tau-\delta)}{8}|\xi|^3+\sum\limits_{j=3}^{\infty}\lambda_{2}^{(j)}|\xi|^{2j-1},
\end{align*} 
where $\lambda_{2}^{(j)}\in\mb{R}$ for $j\geqslant 3$ and it depends on $\lambda_1^{(m)}$ for $m\leqslant j$.

Summarizing the previous statements, we claim the following proposition. The cores are factors $|\xi|^3$, $|\xi|^4$ and some remainders appearing in the expansions.
\begin{prop}\label{Prop_Expansion_01}
	Let $\xi\in\ml{Z}_{\intt}(\varepsilon_0)$ with a small number $\varepsilon_0$. Then, the roots to the characteristic equation \eqref{Cubic_Eq} have the following asymptotic expansions:
	\begin{align*}
		\lambda_1(|\xi|)&=-\frac{1}{\tau}+\delta|\xi|^2+\tau\delta(\delta-\tau)|\xi|^4+\sum\limits_{j=3}^{\infty}\lambda_1^{(j)}|\xi|^{2j},\\
		\mu_{\mathrm{R}}(|\xi|)&=-\frac{\delta}{2}|\xi|^2-\frac{\tau\delta(\delta-\tau)}{2}|\xi|^4+\sum\limits_{j=3}^{\infty}\frac{\lambda_1^{(j)}}{2}|\xi|^{2j},\\
		\mu_{\mathrm{I}}(|\xi|)&= |\xi|+\frac{\delta(4\tau-\delta)}{8}|\xi|^3+\sum\limits_{j=3}^{\infty}\lambda_{2}^{(j)}|\xi|^{2j-1},
	\end{align*}
where $\lambda_1^{(j)}\in\mb{R}$ for $j\geqslant 3$, and $\lambda_{2}^{(j)}\in\mb{R}$ for $j\geqslant 3$ and it depends on $\lambda_1^{(m)}$ for $m\leqslant j$. Moreover, the second and third roots are $\lambda_{2,3}(|\xi|)=\mu_{\mathrm{R}}(|\xi|)\pm i\mu_{\mathrm{I}}(|\xi|)$.
\end{prop}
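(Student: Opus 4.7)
The plan is to solve the cubic $\tau\lambda^3+\lambda^2+(\delta+\tau)|\xi|^2\lambda+|\xi|^2=0$ explicitly for $\xi\in\ml{Z}_{\intt}(\varepsilon_0)$ via Cardano's formula, and then to deduce the expansions of the complex conjugate pair from the Vieta relations \eqref{Relation}. Under the discriminant bound \eqref{discri} the cubic has one real root $\lambda_1$ and a conjugate pair $\lambda_{2,3}=\mu_{\mathrm{R}}\pm i\mu_{\mathrm{I}}$, so Cardano applies. Using the notation $A(|\xi|)$, $B(|\xi|)$, $\triangle_{\mathrm{D}}(|\xi|)$ already introduced, I would factor out the leading behavior of $\sqrt{\triangle_{\mathrm{D}}}$ as $|\xi|/(3\sqrt{3}\tau^{2})\cdot(1+\tilde g(|\xi|^2))$ by Taylor expansion of $\sqrt{1+z}$, and likewise write the two cube roots as $-\tfrac{1}{3\tau}\sqrt[3]{1+h_{\pm}(|\xi|)}$, then expand $\sqrt[3]{1+h_{\pm}}$ via the binomial series to obtain $\lambda_1$ as a formal series in $|\xi|$.

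The main obstacle, and the key observation, is a parity argument on $h_{\pm}(|\xi|)^{m}=\sum_{j,k}\binom{m}{j}\binom{j}{k}(\mp 3\sqrt{3}\tau|\xi|)^{m-j}(\tfrac{9}{2}\tau(2\tau-\delta)|\xi|^{2})^{j-k}(\mp 3\sqrt{3}\tau|\xi|\tilde g(|\xi|^{2}))^{k}$. Each contributing monomial carries the sign $(\mp 1)^{m-j+k}$ and the power $|\xi|^{m+j+k}$; since $(m+j+k)-(m-j+k)=2j$ is even, the parities coincide, so every odd-order-in-$|\xi|$ monomial appears with opposite signs in the $+$ and $-$ branches and cancels in the sum $\sqrt[3]{1+h_{+}}+\sqrt[3]{1+h_{-}}$. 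This reduces $\lambda_{1}(|\xi|)$ to a formal power series in $|\xi|^{2}$, which is the heart of the claim. Extracting the $|\xi|^{2}$ and $|\xi|^{4}$ coefficients then reduces to matching contributions coming from the quadratic and higher-order terms of the binomial series, which I would compute directly using the explicit forms of $A,B,\tilde g$ to obtain the values $\delta$ and $\tau\delta(\delta-\tau)$.

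Having established $\lambda_{1}(|\xi|)=-\tfrac{1}{\tau}+\delta|\xi|^{2}+\tau\delta(\delta-\tau)|\xi|^{4}+\sum_{j\geqslant 3}\lambda_{1}^{(j)}|\xi|^{2j}$, the expansion of $\mu_{\mathrm{R}}$ follows immediately from the linear Vieta relation $\lambda_{1}+2\mu_{\mathrm{R}}=-1/\tau$, so $\mu_{\mathrm{R}}=-\tfrac{1}{2}(\lambda_{1}+1/\tau)$ inherits the even-power structure. For $\mu_{\mathrm{I}}$, I would exploit the relation $\lambda_{1}(\mu_{\mathrm{R}}^{2}+\mu_{\mathrm{I}}^{2})=-|\xi|^{2}/\tau$, solve formally for $\mu_{\mathrm{I}}^{2}=-|\xi|^{2}/(\tau\lambda_{1})-\mu_{\mathrm{R}}^{2}$, and observe that the right-hand side is an even power series in $|\xi|$ starting at $|\xi|^{2}$; extracting the square root therefore produces an odd power series in $|\xi|$. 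Matching the coefficients of $|\xi|^{2}$ and $|\xi|^{4}$ in $\mu_{\mathrm{I}}^{2}$ yields the leading term $|\xi|$ and the $|\xi|^{3}$ coefficient $\delta(4\tau-\delta)/8$; the higher coefficients $\lambda_{2}^{(j)}$ are then determined recursively from the $\lambda_{1}^{(m)}$ with $m\leqslant j$, and are manifestly real. This closes the proof without further difficulty; the only delicate point remains the parity cancellation that guarantees the absence of odd powers in $\lambda_{1}$.
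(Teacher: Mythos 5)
Your proposal is correct and follows essentially the same route as the paper: Cardano's formula with the same $A$, $B$, $\triangle_{\mathrm{D}}$, the identical parity cancellation in $\sum_{\pm}h_{\pm}(|\xi|)^{m}$ showing that $\lambda_{1}$ is a series in $|\xi|^{2}$, and then the Vieta relations \eqref{Relation} to transfer the expansion to $\mu_{\mathrm{R}}$ and $\mu_{\mathrm{I}}$. No gaps to report.
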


\subsection{Asymptotic expansions of solution for small frequencies}\label{Subsec_Solution_behavior}
Thanks to pairwise distinct characteristic roots for small frequencies (see also the negative discriminant \eqref{discri}), we may represent the solution by
\begin{align}
	\widehat{\psi}(t,\xi)&=\frac{-(\mu_{\mathrm{I}}^2+\mu_{\mathrm{R}}^2)\widehat{\psi}_0+2\mu_{\mathrm{R}}\widehat{\psi}_1-\widehat{\psi}_2}{2\mu_{\mathrm{R}}\lambda_1-\mu_{\mathrm{I}}^2-\mu_{\mathrm{R}}^2-\lambda_1^2}\mathrm{e}^{\lambda_1t}+\frac{(2\mu_{\mathrm{R}}\lambda_1-\lambda_1^2)\widehat{\psi}_0-2\mu_{\mathrm{R}}\widehat{\psi}_1+\widehat{\psi}_2}{2\mu_{\mathrm{R}}\lambda_1-\mu_{\mathrm{I}}^2-\mu_{\mathrm{R}}^2-\lambda_1^2}\cos(\mu_{\mathrm{I}}t)\mathrm{e}^{\mu_{\mathrm{R}}t}\notag\\
	&\quad+\frac{\lambda_1(\mu_{\mathrm{R}}\lambda_1+\mu_{\mathrm{I}}^2-\mu_{\mathrm{R}}^2)\widehat{\psi}_0+(\mu_{\mathrm{R}}^2-\mu_I^2-\lambda_1^2)\widehat{\psi}_1-(\mu_{\mathrm{R}}-\lambda_1)\widehat{\psi}_2}{\mu_{\mathrm{I}}(2\mu_{\mathrm{R}}\lambda_1-\mu_{\mathrm{I}}^2-\mu_{\mathrm{R}}^2-\lambda_1^2)}\sin(\mu_{\mathrm{I}}t)\mathrm{e}^{\mu_{\mathrm{R}}t}.\label{Represent_01}
\end{align}
For the sake of concreteness and convenience, we take the next configurations by getting rid of higher-order remainders to be two bridges for sharp asymptotic profiles:
\begin{align}
	\widehat{\ml{G}}_1(t,\xi)&:=
	\frac{-\lambda_1^2 \widehat{\psi}_1+\lambda_1\widehat{\psi}_2}{\mu_{\mathrm{I}}(2\mu_{\mathrm{R}}\lambda_1-\mu_{\mathrm{I}}^2-\mu_{\mathrm{R}}^2-\lambda_1^2)}\sin(\mu_{\mathrm{I}}t)\mathrm{e}^{\mu_{\mathrm{R}}t},\label{auxilary_01}\\
	\widehat{\ml{G}}_2(t,\xi)&:=
	\frac{-\lambda_1^2 \widehat{\psi}_0+\widehat{\psi}_2}{
	2\mu_{\mathrm{R}}\lambda_1-\mu_{\mathrm{I}}^2-\mu_{\mathrm{R}}^2-\lambda_1^2}\cos(\mu_{\mathrm{I}}t)\mathrm{e}^{\mu_{\mathrm{R}}t
	}\label{auxilary_02}.
\end{align}
Then, direct subtractions lead to 
\begin{align}
	\left| \partial_{t}^{\ell} \left( \widehat{\psi}(t,\xi)-\widehat{\ml{G}}_1(t,\xi) \right)\right| &\lesssim \mathrm{e}^{-c|\xi|^{2}t} |\xi|^{\ell} \left(|\widehat{\psi}_0(\xi)|+|\widehat{\psi}_1(\xi)|+|\widehat{\psi}_2(\xi)|\right),\label{eq:2.-1}\\ 
	\left|\partial_{t}^{\ell} \left( \widehat{\psi}(t,\xi)-\widehat{\ml{G}}_1(t,\xi) -\widehat{\ml{G}}_2(t,\xi) \right) \right| &\lesssim \mathrm{e}^{-c|\xi|^{2}t} |\xi|^{\ell+1} \left(|\widehat{\psi}_0(\xi)|+|\widehat{\psi}_1(\xi)|+|\widehat{\psi}_2(\xi)|\right),\label{eq:2.0}
\end{align}
for $\ell=0,\dots,3$ and $\xi\in\ml{Z}_{\intt}(\varepsilon_0)$ by Proposition \ref{Prop_Expansion_01}. What's more, we also introduce the Fourier multipliers related to diffusion-waves as follows: 
\begin{align}\label{Symbol_J}
	\widehat{\ml{J}}_0(t,|\xi|):=\frac{\sin(|\xi|t)}{|\xi|}\mathrm{e}^{-\frac{\delta}{2}|\xi|^2t}\ \ \mbox{as well as}\ \  \widehat{\ml{J}}_1(t,|\xi|):=\cos(|\xi|t)\,\mathrm{e}^{-\frac{\delta}{2}|\xi|^{2} t},
\end{align}
and the combinations of initial datum $\widehat{\Psi}_{0,2}(\xi):=\widehat{\psi}_1(\xi)-\tau^{2}\widehat{\psi}_2(\xi)$ and
$\widehat{\Psi}_{1,2}(\xi):=\widehat{\psi}_1(\xi)+\tau\widehat{\psi}_2(\xi)$, which are crucial to describe the asymptotic profiles of the solution $\psi(t,x)$ for the linearized problem.

As we did in \eqref{eq:2.-1} and \eqref{eq:2.0}, we have derived error estimates between solutions and the configurations. So, our next propositions will devote to the error estimates between the configurations $\widehat{\ml{G}}_{1,2}(t,\xi)$ and Fourier multipliers for diffusion-waves:
\begin{prop} \label{prop:2.4}
	Let $\xi\in\ml{Z}_{\intt}(\varepsilon_0)$ with a small number $\varepsilon_0$. Then, the differences between Fourier multipliers satisfy
	\begin{align}
		 |\widehat{\ml{G}}_1(t,\xi)-\widehat{\ml{J}}_0(t,|\xi|)\widehat{\Psi}_{1,2}(\xi)| &\lesssim \mathrm{e}^{-c|\xi|^{2}t} \left(|\widehat{\psi}_1(\xi)|+|\widehat{\psi}_2(\xi)|\right), \label{eq:2.44}\\
	 |\widehat{\ml{G}}_2(t,\xi)-\widehat{\ml{J}}_1(t,|\xi|)\widehat{\Psi}_{0,2}(\xi)|&\lesssim |\xi|\mathrm{e}^{-c|\xi|^{2}t}  \left(|\widehat{\psi}_0(\xi)|+|\widehat{\psi}_2(\xi)|\right), \label{eq:2.45} 
	\end{align}
as well as
\begin{align}
|\widehat{\ml{G}}_1(t,\xi)-\widehat{\ml{J}}_0(t,|\xi|)\widehat{\Psi}_{1,2}(\xi)-\widehat{\ml{N}}_0(t,|\xi|)\widehat{\Psi}_{1,2}(\xi)|  \lesssim |\xi|\mathrm{e}^{-c|\xi|^{2}t}  \left(|\widehat{\psi}_1(\xi)|+|\widehat{\psi}_2(\xi)|\right), \label{eq:2.46} 
\end{align}
for any $n\geqslant 1$, where the Fourier multiplier is denoted by 
\begin{align*}
	\widehat{\ml{N}}_0(t,|\xi|):=-t \frac{\delta(4\tau-\delta)}{8}|\xi|^{2} \widehat{\ml{J}}_1(t,|\xi|).
\end{align*}
\end{prop}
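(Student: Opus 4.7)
The proof is a two-stage asymptotic analysis in the small-frequency regime $\mathcal{Z}_{\intt}(\varepsilon_0)$, driven throughout by the bookkeeping inequality
\[
 (|\xi|^2 t)^{k}\, e^{-\frac{\delta}{2}|\xi|^2 t}\, e^{C|\xi|^4 t}\ \lesssim\ e^{-c|\xi|^2 t}\qquad(k\geqslant 0),
\]
valid on $\mathcal{Z}_{\intt}(\varepsilon_0)$ once $\varepsilon_0$ is chosen sufficiently small; it absorbs both polynomial factors of $|\xi|^2 t$ arising from Taylor remainders and the higher-order correction $\mu_{\mathrm{R}}+\frac{\delta}{2}|\xi|^2=O(|\xi|^4)$ in the exponent. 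In the first stage I would Taylor-expand the rational coefficients of $\widehat{\mathcal{G}}_1$ and $\widehat{\mathcal{G}}_2$ using Proposition \ref{Prop_Expansion_01}. Since $\lambda_1=-1/\tau+O(|\xi|^2)$, the numerator of $\widehat{\mathcal{G}}_1$ equals $-\tau^{-2}\widehat{\Psi}_{1,2}+O(|\xi|^2)(|\widehat{\psi}_1|+|\widehat{\psi}_2|)$, using the algebraic identity $-\tau^{-2}\widehat{\psi}_1-\tau^{-1}\widehat{\psi}_2=-\tau^{-2}\widehat{\Psi}_{1,2}$; similarly the numerator of $\widehat{\mathcal{G}}_2$ equals $-\tau^{-2}\widehat{\Psi}_{0,2}+O(|\xi|^2)(|\widehat{\psi}_0|+|\widehat{\psi}_2|)$, while the common inner denominator $2\mu_{\mathrm{R}}\lambda_1-\mu_{\mathrm{I}}^2-\mu_{\mathrm{R}}^2-\lambda_1^2$ equals $-\tau^{-2}+O(|\xi|^2)$. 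Division yields
\[
 \widehat{\mathcal{G}}_1=\frac{\widehat{\Psi}_{1,2}}{\mu_{\mathrm{I}}}\sin(\mu_{\mathrm{I}} t)e^{\mu_{\mathrm{R}} t}+R_1,\qquad \widehat{\mathcal{G}}_2=\widehat{\Psi}_{0,2}\cos(\mu_{\mathrm{I}} t)e^{\mu_{\mathrm{R}} t}+R_2,
\]
with $|R_1|\lesssim|\xi|\,e^{-c|\xi|^2 t}(|\widehat{\psi}_1|+|\widehat{\psi}_2|)$ and $|R_2|\lesssim|\xi|^2 e^{-c|\xi|^2 t}(|\widehat{\psi}_0|+|\widehat{\psi}_2|)$, both already inside the required tolerances.

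In the second stage I would compare the leading pieces to $\widehat{\mathcal{J}}_0\widehat{\Psi}_{1,2}$ and $\widehat{\mathcal{J}}_1\widehat{\Psi}_{0,2}$ via a telescoping decomposition: difference of the $\sin$ or $\cos$ arguments, difference between $1/\mu_{\mathrm{I}}$ and $1/|\xi|$ (relevant only for \eqref{eq:2.44}), and difference of exponentials. Using $\sin a-\sin b=2\cos\tfrac{a+b}{2}\sin\tfrac{a-b}{2}$ (and its cosine analogue), the bound $|\mu_{\mathrm{I}}-|\xi||\lesssim|\xi|^3$, and the mean-value estimate $|e^{\mu_{\mathrm{R}} t}-e^{-\frac{\delta}{2}|\xi|^2 t}|\lesssim|\xi|^4 t\,e^{-c|\xi|^2 t}$, each difference reduces to a polynomial in $|\xi|^2 t$ multiplied by a positive power of $|\xi|$; the bookkeeping inequality then yields the $e^{-c|\xi|^2 t}$ bound for \eqref{eq:2.44}, while the absence of the $1/|\xi|$ singularity in the reduction of $\widehat{\mathcal{G}}_2$ gives the stronger $|\xi|e^{-c|\xi|^2 t}$ for \eqref{eq:2.45}.

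For the refined estimate \eqref{eq:2.46} I would push the expansion one further order. Writing $\mu_{\mathrm{I}}=|\xi|(1+\varepsilon)$ with $\varepsilon=\frac{\delta(4\tau-\delta)}{8}|\xi|^2+O(|\xi|^4)$, and expanding $\sin(\mu_{\mathrm{I}} t)=\sin(|\xi|t)\cos(|\xi|t\varepsilon)+\cos(|\xi|t)\sin(|\xi|t\varepsilon)$, together with $1/\mu_{\mathrm{I}}=|\xi|^{-1}(1-\varepsilon+O(\varepsilon^2))$ and $e^{\mu_{\mathrm{R}} t}=e^{-\frac{\delta}{2}|\xi|^2 t}(1+O(|\xi|^4 t))$, the only contribution beyond $\widehat{\mathcal{J}}_0\widehat{\Psi}_{1,2}$ that is not of order $|\xi|\,e^{-c|\xi|^2 t}$ is the $t$-growing term $\frac{\delta(4\tau-\delta)}{8}|\xi|^2 t\cos(|\xi|t)e^{-\frac{\delta}{2}|\xi|^2 t}\widehat{\Psi}_{1,2}$, matching $\widehat{\mathcal{N}}_0\widehat{\Psi}_{1,2}$ modulo sign convention. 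The hardest part is the separation of the competing $O(|\xi|)$ corrections at this profile–error boundary: the $O(|\xi|^2)$ error in the numerator-to-denominator ratio produces, after division by $\mu_{\mathrm{I}}$, an $O(|\xi|)$ remainder; the expansion $1/\mu_{\mathrm{I}}-1/|\xi|=O(|\xi|)$ produces a further $O(|\xi|)\sin(|\xi|t)$ piece from the leading term; and these must be cleanly distinguished from the genuine $t$-growing profile coming from the argument shift $\mu_{\mathrm{I}} t-|\xi|t$, each verified to lie inside the $|\xi|e^{-c|\xi|^2 t}$ tolerance via the bookkeeping inequality.
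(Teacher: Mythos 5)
Your argument is correct and follows essentially the same route as the paper's proof: a telescoping decomposition driven by the expansions of Proposition \ref{Prop_Expansion_01}, mean-value/Taylor bounds for the differences of the sine, cosine and exponential factors, and absorption of powers of $|\xi|^2 t$ into $\mathrm{e}^{-c|\xi|^2 t}$ (the paper's terms $D_1,\dots,D_4$ and $D_{2,1},D_{2,2}$ do simultaneously what you do in two stages, first the rational coefficients, then the time-dependent factors). Concerning your ``modulo sign convention'': the correction $+\,t\,\tfrac{\delta(4\tau-\delta)}{8}|\xi|^{2}\widehat{\ml{J}}_1(t,|\xi|)\widehat{\Psi}_{1,2}(\xi)$ you derive is the right one --- it is exactly what the paper's own decomposition subtracts in $D_{2,1},D_{2,2}$ and what the introduction's second-order profile encodes once one recalls $\Delta\leftrightarrow-|\xi|^{2}$ --- so the minus sign in the displayed definition of $\widehat{\ml{N}}_0$ is a slip in the statement rather than a gap in your computation.
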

\begin{proof}
We begin with the proof of \eqref{eq:2.44}.
We decompose the target into four parts:
\begin{align}\label{eq:2.47}
	\mbox{LHS of \eqref{eq:2.44}}=D_1(t,\xi)+D_2(t,\xi)+D_3(t,\xi)+D_4(t,\xi), 
\end{align}
where
\begin{align*}
	D_1(t,\xi)& := \left(
	\mathrm{e}^{\mu_{\mathrm{R}}t}-\mathrm{e}^{-\frac{\delta}{2}|\xi|^2t} \right) \frac{-\lambda_1^2 \widehat{\psi}_1+\lambda_1\widehat{\psi}_2}{\mu_{\mathrm{I}}(2\mu_{\mathrm{R}}\lambda_1-\mu_{\mathrm{I}}^2-\mu_{\mathrm{R}}^2-\lambda_1^2)}\sin(\mu_{\mathrm{I}}t), \\
	D_2(t,\xi)& := \mathrm{e}^{-\frac{\delta}{2}|\xi|^2t} \big( \sin(\mu_{\mathrm{I}}t)-\sin(|\xi|t) \big)\frac{-\lambda_1^2 \widehat{\psi}_1+\lambda_1\widehat{\psi}_2}{\mu_{\mathrm{I}}(2\mu_{\mathrm{R}}\lambda_1-\mu_{\mathrm{I}}^2-\mu_{\mathrm{R}}^2-\lambda_1^2)}, \\
	D_3(t,\xi)& := \mathrm{e}^{-\frac{\delta}{2}|\xi|^2t} \sin(|\xi|t) \left( \frac{1}{\mu_{\mathrm{I}}(2\mu_{\mathrm{R}}\lambda_1-\mu_{\mathrm{I}}^2-\mu_{\mathrm{R}}^2-\lambda_1^2)}+\frac{\tau^{2}}{|\xi|} \right)(-\lambda_1^2 \widehat{\psi}_1+\lambda_1\widehat{\psi}_2) , \\
	D_4(t,\xi)& := \mathrm{e}^{-\frac{\delta}{2}|\xi|^2t} \frac{\sin(|\xi|t)}{|\xi|} 
	\left( 
	(\tau^{2} \lambda_1^2-1) \widehat{\psi}_1-(\tau^{2} \lambda_1+\tau  \widehat{\psi}_2)
	\right).
\end{align*}
Using the trick 
\begin{align*}
\chi_{\intt}(\xi)\left|\mathrm{e}^{\mu_{\mathrm{R}}t}-\mathrm{e}^{-\frac{\delta}{2}|\xi|^2t}\right|&\lesssim\chi_{\intt}(\xi)|\xi|^4t\mathrm{e}^{-\frac{\delta}{2}|\xi|^2t}\left|\int_0^1\mathrm{e}^{-\frac{\tau\delta(\delta-\tau)}{2}|\xi|^4t\eta+\ml{O}(|\xi|^6)t\eta}\mathrm{d}\eta \right|\notag\\
&\lesssim\chi_{\intt}(\xi)|\xi|^2\mathrm{e}^{-c|\xi|^2t}
\end{align*}
and Proposition \ref{Prop_Expansion_01},
we have 
\begin{align} \label{eq:2.5}
\chi_{\intt}(\xi)|D_1(t,\xi)| \lesssim\chi_{\intt}(\xi)|\xi| \mathrm{e}^{-c|\xi|^2t}\left(|\widehat{\psi}_1(\xi)|+|\widehat{\psi}_2(\xi)|\right). 
\end{align}
Motivated by
\begin{align*}
	\chi_{\intt}(\xi)|\sin(\mu_{\mathrm{I}}t)-\sin(|\xi|t)|&\lesssim\chi_{\intt}(\xi)|\xi|^3t\left|\cos\left(\tilde{\mu}_{\mathrm{I}}t\right)\right|\lesssim\chi_{\intt}(\xi)|\xi|^3t
\end{align*}
from the mean value theorem with $\tilde{\mu}_{\mathrm{I}}\in(|\xi|,\mu_{\mathrm{I}})$, one realizes that
\begin{align} \label{eq:2.6}
	\chi_{\intt}(\xi)|D_2(t,\xi)| \lesssim\chi_{\intt}(\xi)\mathrm{e}^{-c|\xi|^2t}\left(|\widehat{\psi}_1(\xi)|+|\widehat{\psi}_2(\xi)|\right). 
\end{align}
Moreover, due to the estimates 
\begin{align*}
	\chi_{\intt}(\xi)\left|\frac{1}{\mu_{\mathrm{I}}(2\mu_{\mathrm{R}}\lambda_1-\mu_{\mathrm{I}}^2-\mu_{\mathrm{R}}^2-\lambda_1^2)}+\frac{\tau^2}{|\xi|}\right|&\lesssim\chi_{\intt}(\xi)\frac{|\,|\xi|-\tau^2\lambda_1^2\mu_{\mathrm{I}}|+\tau^2\mu_{\mathrm{I}}|2\mu_{\mathrm{R}}\lambda_1-\mu_{\mathrm{I}}^2-\mu_{\mathrm{R}}^2|}{\mu_{\mathrm{I}}|\xi|\,|2\mu_{\mathrm{R}}\lambda_1-\mu_{\mathrm{I}}^2-\mu_{\mathrm{R}}^2-\lambda_1^2|}\\
	&\lesssim\chi_{\intt}(\xi)|\xi|,
\end{align*}
we obtain 
\begin{align}\label{eq:2.7}
\chi_{\intt}(\xi)\big(|D_3(t,\xi)|+|D_4(t,\xi)|\big) \lesssim\chi_{\intt}(\xi) |\xi| \mathrm{e}^{-c|\xi|^2t}\left(|\widehat{\psi}_1(\xi)|+|\widehat{\psi}_2(\xi)|\right). 
\end{align}
Summing up the estimates \eqref{eq:2.47}, \eqref{eq:2.5}, \eqref{eq:2.6} and \eqref{eq:2.7}, we have the desired estimate \eqref{eq:2.44}. Another estimate \eqref{eq:2.45} can be demonstrated by a parallel way.

Next, we show the estimate \eqref{eq:2.46}, whose problematic part is the $D_2(t,\xi)$ because the lack of decay factor $|\xi|$ in the derived estimate \eqref{eq:2.6}. In other words, we have to search the dominant component in $D_2(t,\xi)$.
In the companion work, we just modify the decomposition as follows:
\begin{align*}
	\mbox{LHS of \eqref{eq:2.47}}=D_1(t,\xi)+D_{2,1}(t,\xi)+D_{2,2}(t,\xi)+D_3(t,\xi)+D_4(t,\xi)
\end{align*}
where $D_j(t,\xi)$ with $j=1,3,4$ were defined in preceding part of the text, and
\begin{align*}
	D_{2,1}(t,\xi)& := \mathrm{e}^{-\frac{\delta}{2}|\xi|^2t} \left( \sin(\mu_{\mathrm{I}}t)-\sin(|\xi|t)-\frac{\delta(4 \tau-\delta)}{8}|\xi|^{3} t \cos(|\xi| t) \right)\frac{-\lambda_1^2 \widehat{\psi}_1+\lambda_1\widehat{\psi}_2}{\mu_{\mathrm{I}}(2\mu_{\mathrm{R}}\lambda_1-\mu_{\mathrm{I}}^2-\mu_{\mathrm{R}}^2-\lambda_1^2)}, \\
	D_{2,2}(t,\xi)& := \mathrm{e}^{-\frac{\delta}{2}|\xi|^2t} \frac{\delta(4 \tau-\delta)}{8} |\xi|^{3} t \cos(|\xi| t) 
	\left(
	\frac{-\lambda_1^2 \widehat{\psi}_1+\lambda_1\widehat{\psi}_2}{\mu_{\mathrm{I}}(2\mu_{\mathrm{R}}\lambda_1-\mu_{\mathrm{I}}^2-\mu_{\mathrm{R}}^2-\lambda_1^2)} 
	-
	\frac{\widehat{\psi}_1+\tau \widehat{\psi}_2}{|\xi|}
	\right). 
\end{align*}
Noting the expansion 
\begin{align*}
	\sin(\mu_{\mathrm{I}}t)=\sin(|\xi|t) +(\mu_{\mathrm{I}}-|\xi|)t \cos(|\xi| t) -\frac{(\mu_{\mathrm{I}}-|\xi|)^{2}}{2}t^{2} \sin (\bar{\mu}_{\mathrm{I}} t)
\end{align*}
with $\bar{\mu}_{\mathrm{I}}\in(|\xi|,\mu_{\mathrm{I}})$,
which can be rephrased via 
\begin{align*}
	\sin(\mu_{\mathrm{I}}t)-\sin(|\xi|t) -\frac{\delta(4 \tau-\delta)}{8} |\xi|^{3} t \cos(|\xi| t) = t \ml{O}(|\xi|^5)+t^{2} \ml{O}(|\xi|^6)
\end{align*}
as $|\xi| \to 0$, by employing Proposition \ref{Prop_Expansion_01} again,
it follows \eqref{eq:2.46} as in the proof of \eqref{eq:2.44} since
\begin{align*}
	\chi_{\intt}(\xi)\left|\frac{-\lambda_1^2}{\mu_{\mathrm{I}}(2\mu_{\mathrm{R}}\lambda_1-\mu_{\mathrm{I}}^2-\mu_{\mathrm{R}}^2-\lambda_1^2)}-\frac{1}{|\xi|}\right|+\chi_{\intt}(\xi)\left|\frac{\lambda_1}{\mu_{\mathrm{I}}(2\mu_{\mathrm{R}}\lambda_1-\mu_{\mathrm{I}}^2-\mu_{\mathrm{R}}^2-\lambda_1^2)}-\frac{\tau}{|\xi|}\right|\lesssim\chi_{\intt}(\xi)|\xi|.
\end{align*}
Consequently, we complete the proof of this proposition. 
\end{proof}
With the same observation as 
 $\partial_{t}^{\ell} \widehat{\ml{G}}_1(t,\xi)$ for $\ell=1,2$ and small $|\xi|$, 
 one may see
 \begin{align}\label{eq:2.81}
& \partial_{t} \widehat{\ml{G}}_1(t,\xi) \sim \frac{-\lambda_1^2 \widehat{\psi}_1+\lambda_1\widehat{\psi}_2}{2\mu_{\mathrm{R}}\lambda_1-\mu_{\mathrm{I}}^2-\mu_{\mathrm{R}}^2-\lambda_1^2}\cos(\mu_{\mathrm{I}}t)\mathrm{e}^{\mu_{\mathrm{R}}t}
\sim \widehat{\ml{J}}_1(t,|\xi|)\widehat{\Psi}_{1,2}(\xi),\\
& \partial_{t}^{2} \widehat{\ml{G}}_1(t,\xi) \sim -\frac{\mu_{\mathrm{I}}(-\lambda_1^2 \widehat{\psi}_1+\lambda_1\widehat{\psi}_2)}{2\mu_{\mathrm{R}}\lambda_1-\mu_{\mathrm{I}}^2-\mu_{\mathrm{R}}^2-\lambda_1^2}\sin(\mu_{\mathrm{I}}t)\mathrm{e}^{\mu_{\mathrm{R}}t}
\sim - \widehat{\ml{J}}_0(t,|\xi|) |\xi|^{2} \widehat{\Psi}_{1,2}(\xi), 
 \end{align}
to be the first-order asymptotic profiles, and 
\begin{align*}
\partial_{t} \widehat{\ml{G}}_1(t,\xi) - \widehat{\ml{J}}_1(t,|\xi|)\widehat{\Psi}_{1,2}(\xi)
& \sim-\delta\left(\frac{4\tau-\delta}{8}|\xi|^2t+\frac{1}{2}\right)|\xi|^2\widehat{\ml{J}}_0(t,|\xi|)\widehat{\Psi}_{1,2}(\xi)=:\widehat{\ml{N}}_1(t,|\xi|)\widehat{\Psi}_{1,2}(\xi),\\
\partial_{t}^{2} \widehat{\ml{G}}_1(t,\xi)+\widehat{\ml{J}}_0(t,|\xi|)|\xi|^{2}  \widehat{\Psi}_{1,2}(\xi) 
&\sim -\delta\left(\frac{4\tau-\delta}{8}|\xi|^2t+1\right)|\xi|^2\widehat{\ml{J}}_1(t,|\xi|)\widehat{\Psi}_{1,2}(\xi)=:\widehat{\ml{N}}_2(t,|\xi|)\widehat{\Psi}_{1,2}(\xi),
\end{align*}
to be the second-order asymptotic profiles.
Similarly, we also arrive at
\begin{align}
&\partial_{t} \widehat{\ml{G}}_2(t,\xi) 
\sim - \widehat{\ml{J}}_0(t,|\xi|)|\xi|^{2}\widehat{\Psi}_{0,2}(\xi),\\
&\partial_{t}^{2} \widehat{\ml{G}}_2(t,\xi) 
\sim - \widehat{\ml{J}}_1(t,|\xi|)|\xi|^{2}\widehat{\Psi}_{0,2}(\xi),\label{eq:2.83}
\end{align}
when $|\xi|\leqslant \varepsilon_0\ll 1$. Summarizing the last computations, we have the pointwise estimates of the fundamental solutions in the Fourier space. 
\begin{prop} \label{prop:2.5}
Let $\xi\in\ml{Z}_{\intt}(\varepsilon_0)$ with a small number $\varepsilon_0$.
Then, the differences between Fourier multipliers fulfill
\begin{align*}
| \partial_{t} \widehat{\ml{G}}_1(t,\xi) - \widehat{\ml{J}}_1(t,|\xi|)\widehat{\Psi}_{1,2}(\xi)  |
&\lesssim |\xi| \mathrm{e}^{-c|\xi|^2t}\left(|\widehat{\psi}_1(\xi)|+|\widehat{\psi}_2(\xi)|\right),\\
| \partial_{t}^{2} \widehat{\ml{G}}_1(t,\xi) + \widehat{\ml{J}}_0(t,|\xi|)|\xi|^{2} \widehat{\Psi}_{1,2}(\xi)  |
&\lesssim |\xi|^{2} \mathrm{e}^{-c|\xi|^2t}\left(|\widehat{\psi}_1(\xi)|+|\widehat{\psi}_2(\xi)|\right),\\
| \partial_{t} \widehat{\ml{G}}_2(t,\xi) + \widehat{\ml{J}}_0(t,|\xi|)|\xi|^{2}\widehat{\Psi}_{0,2}(\xi)  |
&\lesssim |\xi|^{2} \mathrm{e}^{-c|\xi|^2t}\left(|\widehat{\psi}_0(\xi)|+|\widehat{\psi}_2(\xi)|\right),\\
| \partial_{t}^{2} \widehat{\ml{G}}_2(t,\xi) + \widehat{\ml{J}}_1(t,|\xi|)|\xi|^{2} \widehat{\Psi}_{0,2}(\xi)  |
&\lesssim |\xi|^{3} \mathrm{e}^{-c|\xi|^2t}\left(|\widehat{\psi}_0(\xi)|+|\widehat{\psi}_2(\xi)|\right), 
\end{align*}
and
\begin{align*}
		 | \partial_{t} \widehat{\ml{G}}_1(t,\xi) - \widehat{\ml{J}}_1(t,|\xi|)\widehat{\Psi}_{1,2}(\xi) -\widehat{\ml{N}}_1(t,|\xi|) \widehat{\Psi}_{1,2}(\xi)| &\lesssim |\xi|^{2} \mathrm{e}^{-c|\xi|^2t}\left(|\widehat{\psi}_1(\xi)|+|\widehat{\psi}_2(\xi)|\right),\\
	| \partial_{t}^{2} \widehat{\ml{G}}_1(t,\xi) + \widehat{\ml{J}}_0(t,|\xi|)|\xi|^{2}\widehat{\Psi}_{1,2}(\xi) -\widehat{\ml{N}}_2(t,|\xi|)\widehat{\Psi}_{1,2}(\xi)| &\lesssim |\xi|^{3} \mathrm{e}^{-c|\xi|^2t}\left(|\widehat{\psi}_1(\xi)|+|\widehat{\psi}_2(\xi)|\right).
\end{align*}
\end{prop}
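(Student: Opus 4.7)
The plan is to parallel the argument of Proposition \ref{prop:2.4}, applying it now to the first and second time derivatives of the bridges $\widehat{\ml{G}}_{1,2}$ defined in \eqref{auxilary_01}--\eqref{auxilary_02}. A direct differentiation yields, for instance,
\begin{align*}
\partial_{t}\widehat{\ml{G}}_1(t,\xi) = \frac{-\lambda_{1}^{2}\widehat{\psi}_1+\lambda_{1}\widehat{\psi}_2}{\mu_{\mathrm{I}}(2\mu_{\mathrm{R}}\lambda_{1}-\mu_{\mathrm{I}}^{2}-\mu_{\mathrm{R}}^{2}-\lambda_{1}^{2})}\bigl[\mu_{\mathrm{I}}\cos(\mu_{\mathrm{I}}t)+\mu_{\mathrm{R}}\sin(\mu_{\mathrm{I}}t)\bigr]\mathrm{e}^{\mu_{\mathrm{R}}t},
\end{align*}
and analogous explicit expressions for $\partial_{t}^{2}\widehat{\ml{G}}_{1}$, $\partial_{t}\widehat{\ml{G}}_{2}$, $\partial_{t}^{2}\widehat{\ml{G}}_{2}$. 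Substituting the expansions of $\lambda_{1},\mu_{\mathrm{R}},\mu_{\mathrm{I}}$ from Proposition \ref{Prop_Expansion_01} shows that the rational prefactor of $\partial_{t}\widehat{\ml{G}}_{1}$ collapses to $\widehat{\Psi}_{1,2}(\xi)/|\xi|+\ml{O}(|\xi|)$ and that of $\partial_{t}\widehat{\ml{G}}_{2}$ to $\widehat{\Psi}_{0,2}(\xi)+\ml{O}(|\xi|^{2})$, from which the leading-order profiles sketched in \eqref{eq:2.81}--\eqref{eq:2.83} are read off term by term.

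For the first-order estimates, I would imitate the four-piece splitting used in the proof of Proposition \ref{prop:2.4}: a piece that replaces $\mathrm{e}^{\mu_{\mathrm{R}}t}$ by $\mathrm{e}^{-\delta|\xi|^{2}t/2}$, bounded by $|\xi|^{2}\mathrm{e}^{-c|\xi|^{2}t}$ via the mean value theorem and $\mu_{\mathrm{R}}+\delta|\xi|^{2}/2=\ml{O}(|\xi|^{4})$; a piece that replaces $\sin(\mu_{\mathrm{I}}t)$ or $\cos(\mu_{\mathrm{I}}t)$ by its $|\xi|t$ analogue, contributing $\ml{O}(|\xi|^{3}t)$; a piece that swaps the rational coefficient for its leading approximation $\widehat{\Psi}_{1,2}/|\xi|$ or $\widehat{\Psi}_{0,2}$, contributing $\ml{O}(|\xi|)$; and a residual piece collecting manifestly smaller contributions such as $\mu_{\mathrm{R}}\sin(\mu_{\mathrm{I}}t)\mathrm{e}^{\mu_{\mathrm{R}}t}/|\xi|$, which intrinsically carries $|\xi|^{2}$. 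Absorbing the polynomial prefactors of $t$ into a slightly smaller Gaussian $\mathrm{e}^{-c'|\xi|^{2}t}$ produces the advertised orders $|\xi|$, $|\xi|^{2}$, $|\xi|^{2}$ and $|\xi|^{3}$ for the four differences.

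For the two second-order statements I would refine the decomposition in the style of $D_{2,1},D_{2,2}$ from the proof of Proposition \ref{prop:2.4}, by writing
\begin{align*}
\sin(\mu_{\mathrm{I}}t)&=\sin(|\xi|t)+(\mu_{\mathrm{I}}-|\xi|)t\cos(|\xi|t)-\tfrac{(\mu_{\mathrm{I}}-|\xi|)^{2}}{2}t^{2}\sin(\bar{\mu}_{\mathrm{I}}t),\\
\cos(\mu_{\mathrm{I}}t)&=\cos(|\xi|t)-(\mu_{\mathrm{I}}-|\xi|)t\sin(|\xi|t)-\tfrac{(\mu_{\mathrm{I}}-|\xi|)^{2}}{2}t^{2}\cos(\tilde{\mu}_{\mathrm{I}}t),
\end{align*}
together with $\mu_{\mathrm{I}}-|\xi|=\tfrac{\delta(4\tau-\delta)}{8}|\xi|^{3}+\ml{O}(|\xi|^{5})$ and $\mathrm{e}^{\mu_{\mathrm{R}}t}=\mathrm{e}^{-\delta|\xi|^{2}t/2}(1+\ml{O}(|\xi|^{4}t))$. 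The $-\tfrac{\delta(4\tau-\delta)}{8}|\xi|^{4}t\widehat{\ml{J}}_{0}$ portion of $\widehat{\ml{N}}_{1,2}$ then emerges from the linear phase correction $(\mu_{\mathrm{I}}-|\xi|)t$, while its $-\tfrac{\delta}{2}|\xi|^{2}\widehat{\ml{J}}_{0}$ portion emerges from the $\mu_{\mathrm{R}}\sin(\mu_{\mathrm{I}}t)$ or $\mu_{\mathrm{R}}\cos(\mu_{\mathrm{I}}t)$ factor through $\mu_{\mathrm{R}}\sim-\tfrac{\delta}{2}|\xi|^{2}$. The remaining terms—the quadratic Taylor remainder $\ml{O}(|\xi|^{6}t^{2})$, the exponential correction $\ml{O}(|\xi|^{4}t)$ and the next-order piece of the rational prefactor—are all bounded by $|\xi|^{2}\mathrm{e}^{-c|\xi|^{2}t}$ or $|\xi|^{3}\mathrm{e}^{-c|\xi|^{2}t}$ times $|\widehat{\psi}_{j}(\xi)|$.

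The main obstacle will be the bookkeeping in the second-order estimate for $\partial_{t}\widehat{\ml{G}}_{1}$: the three independent sources of $|\xi|^{2}$-order correction (the next term in the rational prefactor expansion, the $(\mu_{\mathrm{I}}-|\xi|)t$ correction inside $\mu_{\mathrm{I}}\cos(\mu_{\mathrm{I}}t)$, and the $\mu_{\mathrm{R}}\sin(\mu_{\mathrm{I}}t)$ term) must be consolidated so that the total matches $\widehat{\ml{N}}_{1}(t,|\xi|)\widehat{\Psi}_{1,2}(\xi)$ modulo a residual bounded by $|\xi|^{2}\mathrm{e}^{-c|\xi|^{2}t}(|\widehat{\psi}_{1}|+|\widehat{\psi}_{2}|)$. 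Verifying this consolidation—and its $\partial_{t}^{2}\widehat{\ml{G}}_{1}$ counterpart—is the only algebraically delicate step; everything else reduces to routine applications of the trigonometric and exponential mean value theorems already exploited in the proof of Proposition \ref{prop:2.4}.
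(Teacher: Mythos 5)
Your proposal is correct and follows essentially the same route as the paper: the paper establishes Proposition \ref{prop:2.5} precisely by differentiating $\widehat{\ml{G}}_{1,2}$, inserting the even/odd expansions of Proposition \ref{Prop_Expansion_01}, and repeating the decomposition from the proof of Proposition \ref{prop:2.4}, which is exactly your plan, including the correct identification of the phase correction $(\mu_{\mathrm{I}}-|\xi|)t$ and of the factor $\mu_{\mathrm{R}}\sim-\tfrac{\delta}{2}|\xi|^{2}$ as the two sources of $\widehat{\ml{N}}_{1}$ and $\widehat{\ml{N}}_{2}$. Only cosmetic imprecisions remain: in $\widehat{\ml{N}}_{2}$ the $t$-linear portion carries $\widehat{\ml{J}}_{1}$ (with coefficient $\delta$ rather than $\delta/2$ for the non-$t$ part, coming from $2\mu_{\mathrm{R}}\mu_{\mathrm{I}}$), not $\widehat{\ml{J}}_{0}$, and the residual $\mu_{\mathrm{R}}\sin(\mu_{\mathrm{I}}t)\mathrm{e}^{\mu_{\mathrm{R}}t}/|\xi|$ is of size $|\xi|$ rather than $|\xi|^{2}$, which is still exactly what the first estimate requires.
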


\subsection{Large-time profiles of first- and second-order for the MGT equation}\label{Subsec_first-second}
Let us state our first theorem on first-order asymptotic profiles of solution to the MGT equation \eqref{Eq_MGT} for large-time. This result simplifies the complicated profile $\tau J(t,|D|)\psi_2(x)$ in \cite[Equation (22)]{Chen-Ikehata=2021}, and indicates essential structure of the viscous MGT equation.
\begin{theorem}\label{Thm_First_Order_Prof}
Let $(\psi_0 ,\psi_1 ,\psi_2 )\in \ml{A}_s$ for $s\in[0,\infty)$. 
Then, the solution to the Cauchy problem \eqref{Eq_MGT} fulfills the following refined estimates:
\begin{align} \label{eq:2.9}
	\left\|\partial_t^{\ell}\psi(t,\cdot)-\frac{1}{|D|}\mathrm{e}^{\frac{\delta}{2}\Delta t}\partial_t^{\ell}\sin(|D|t)\Psi_{1,2}(\cdot)\right\|_{\dot{H}^k}\lesssim(1+t)^{-\frac{n}{4}-\frac{k+\ell}{2}}\|(\psi_0 ,\psi_1 ,\psi_2 )\|_{\ml{A}_{(k-2+\ell)^+}}
\end{align}
for $0\leqslant k\leqslant s+2-\ell$ with $\ell=0,1,2$, where the datum space $\ml{A}_s$ was defined in Subsection \ref{subsection:Notation}.
\end{theorem}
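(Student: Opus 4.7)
The plan is to localise in frequency via the cut-offs $\chi_{\intt}$, $\chi_{\bdd}$, $\chi_{\extt}$ and estimate each contribution to $\|\cdot\|_{\dot{H}^k}$ through Plancherel's theorem. The substantive region is $\ml{Z}_{\intt}(\varepsilon_0)$, where the dispersive--diffusive structure of $\widehat{\psi}$ is non-trivial; on $\ml{Z}_{\bdd}\cup\ml{Z}_{\extt}$ both the solution and the profile decay exponentially in $t$, so these zones contribute lower-order remainders.

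\textbf{Small frequencies.} A direct Fourier-side computation shows that $\frac{1}{|D|}\mathrm{e}^{\frac{\delta}{2}\Delta t}\partial_t^\ell\sin(|D|t)\Psi_{1,2}$ has symbol $\widehat{\ml{J}}_0(t,|\xi|)\widehat{\Psi}_{1,2}(\xi)$ for $\ell=0$, $\widehat{\ml{J}}_1(t,|\xi|)\widehat{\Psi}_{1,2}(\xi)$ for $\ell=1$, and $-|\xi|^2\widehat{\ml{J}}_0(t,|\xi|)\widehat{\Psi}_{1,2}(\xi)$ for $\ell=2$, which are exactly the targets appearing in \eqref{eq:2.44} and in the first two lines of Proposition~\ref{prop:2.5}. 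Denoting this symbol by $\widehat{\mathrm{Prof}}_\ell(t,\xi)$ and inserting the bridge $\widehat{\ml{G}}_1$ via the triangle inequality,
\begin{align*}
    \partial_t^\ell\widehat{\psi}-\widehat{\mathrm{Prof}}_\ell = \bigl(\partial_t^\ell\widehat{\psi}-\partial_t^\ell\widehat{\ml{G}}_1\bigr) + \bigl(\partial_t^\ell\widehat{\ml{G}}_1-\widehat{\mathrm{Prof}}_\ell\bigr),
\end{align*}
I control the first bracket by \eqref{eq:2.-1} and the second by Proposition~\ref{prop:2.4} (when $\ell=0$) or Proposition~\ref{prop:2.5} (when $\ell=1,2$). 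Both pieces yield an $|\xi|^\ell\mathrm{e}^{-c|\xi|^2 t}$ factor times $|\widehat{\psi}_0|+|\widehat{\psi}_1|+|\widehat{\psi}_2|$, whence
\begin{align*}
    \chi_{\intt}(\xi)|\xi|^k\bigl|\partial_t^\ell\widehat{\psi}-\widehat{\mathrm{Prof}}_\ell\bigr|\;\lesssim\;\chi_{\intt}(\xi)|\xi|^{k+\ell}\mathrm{e}^{-c|\xi|^2 t}\bigl(|\widehat{\psi}_0(\xi)|+|\widehat{\psi}_1(\xi)|+|\widehat{\psi}_2(\xi)|\bigr).
\end{align*}
Bounding $|\widehat{\psi}_j(\xi)|\leqslant \|\psi_j\|_{L^1}$ by Hausdorff--Young and invoking \eqref{Eq_05} for $\|\chi_{\intt}|\xi|^{k+\ell}\mathrm{e}^{-c|\xi|^2 t}\|_{L^2}$, Plancherel delivers the desired rate $(1+t)^{-n/4-(k+\ell)/2}$ times the $L^1$-parts of $\|(\psi_0,\psi_1,\psi_2)\|_{\ml{A}_{(k-2+\ell)^+}}$.

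\textbf{Bounded and large frequencies.} On $\ml{Z}_{\bdd}$ the solution kernels satisfy $|\partial_t^\ell\widehat{K}_j|\lesssim\mathrm{e}^{-ct}$, and the profile symbol $\widehat{\mathrm{Prof}}_\ell$ carries $\mathrm{e}^{-\frac{\delta}{2}|\xi|^2 t}\lesssim\mathrm{e}^{-c t}$ on this annulus, so both the solution and the profile contribute $\mathrm{e}^{-ct}\sum_j\|\psi_j\|_{L^2}$ at worst. On $\ml{Z}_{\extt}$ the pointwise bounds of Subsection~\ref{Sub_Sec_Extend} yield
\begin{align*}
    \chi_{\extt}(\xi)|\partial_t^\ell\widehat{\psi}|\lesssim\chi_{\extt}(\xi)\mathrm{e}^{-ct}\bigl(\max\{1,|\xi|^{\ell-1}\}|\widehat{\psi}_0|+|\xi|^{\ell-1}|\widehat{\psi}_1|+|\xi|^{\ell-2}|\widehat{\psi}_2|\bigr),
\end{align*}
while the profile is super-exponentially small on $\ml Z_{\extt}$ because of the heat kernel $\mathrm{e}^{-\frac{\delta}{2}|\xi|^2 t}$. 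After multiplying by $|\xi|^k$ and integrating in $L^2$, these contributions are dominated by $\mathrm{e}^{-ct}\|(\psi_0,\psi_1,\psi_2)\|_{\ml{A}_{(k-2+\ell)^+}}$, which is absorbed into $(1+t)^{-n/4-(k+\ell)/2}$ for $t\geqslant 0$.

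\textbf{Main obstacle.} Once the two preparatory Propositions~\ref{prop:2.4}--\ref{prop:2.5} are in hand, the present theorem reduces to a clean triangle-inequality argument plus the heat-type $L^2$ estimate \eqref{Eq_05}. The real difficulty has already been absorbed into those propositions: establishing the second-line bound in Proposition~\ref{prop:2.5} for $\ell=2$ requires the refined WKB expansion of $\mu_{\mathrm{R}},\mu_{\mathrm{I}}$ in Proposition~\ref{Prop_Expansion_01} with the extra $|\xi|^2$-order term, so that the gap between $\partial_t^\ell\widehat{\ml{G}}_1$ and the diffusion-wave symbol gains one full power of $|\xi|$ relative to the naive matching; without this, the $L^2$ integral of $\chi_{\intt}(\xi)|\xi|^{k+\ell}\mathrm{e}^{-c|\xi|^2 t}$ would lose the decisive factor of $(1+t)^{-1/2}$ needed in low dimensions.
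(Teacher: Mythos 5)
Your proposal is correct and follows essentially the same route as the paper: on small frequencies it combines \eqref{eq:2.-1} with \eqref{eq:2.44} (for $\ell=0$) and Proposition \ref{prop:2.5} (for $\ell=1,2$) via the triangle inequality to get the bound $\chi_{\intt}(\xi)|\xi|^{k+\ell}\mathrm{e}^{-c|\xi|^2t}$ times the data, then applies \eqref{Eq_05} with Hausdorff--Young, while the bounded and large frequency zones are absorbed by the exponential pointwise estimates of Subsection \ref{Sub_Sec_Extend}. The only difference is that you spell out the non-small-frequency zones (and the regularity bookkeeping for $\ml{A}_{(k-2+\ell)^+}$) explicitly, which the paper leaves implicit.
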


\begin{remark}
	By subtracting the first-order profiles in Theorem \ref{Thm_First_Order_Prof}, we observe an improvement of decay rate $t^{-\frac{3}{4}}$ when $n=1$, $(t\ln t)^{-\frac{1}{2}}$ when $n=2$, and $t^{-\frac{1}{2}}$ when $n\geqslant 3$ of the time-dependent coefficients for initial datum as $t\gg1$ if we compare with the first estimate in Proposition \ref{Prop_Estimate_Solution_itself}. For the other case, the decay rate has been enhanced $t^{-\frac{1}{2}}$ for large-time.
\end{remark}

\begin{remark}\label{Rem_MGT_Kuz}
By employing \cite[Lemma 2.1]{Ikehata=2014}, the solution $\varphi=\varphi(t,x)$ to linear Kuznetsov's equation with vanishing first data, namely,
\begin{align}\label{Lienar_Kuznetsov}
	\begin{cases}
		\varphi_{tt}-\Delta\varphi-\delta\Delta\varphi_t=0,&x\in\mb{R}^n,\ t>0,\\
		\varphi(0,x)=0,\ \varphi_t(0,x)=\Psi_{1,2}(x),&x\in\mb{R}^n,
	\end{cases}
\end{align}
fulfilling the refined estimate as $t\gg1$ that
\begin{align}\label{Eq_02}
\left\|\chi_{\intt}(D)\big(\varphi(t,\cdot)-\ml{J}_0(t,|D|)\Psi_{1,2}(\cdot)\big)\right\|_{L^2}\lesssim t^{-\frac{n}{4}}\|(\psi_1,\psi_2)\|_{(L^1)^2}.
\end{align}
Then, using the triangle inequality for \eqref{eq:2.9} with $k=0$, $\ell=0$ as well as \eqref{Eq_02}, we claim
\begin{align*}
	\left\|\psi(t,\cdot)-\varphi(t,\cdot)\right\|_{L^2}\lesssim t^{-\frac{n}{4}}\|(\psi_0,\psi_1,\psi_2)\|_{(L^2\cap L^1)^3}
\end{align*}
as $t\gg1$ for any $n\geqslant 1$. It shows approximated relation (or generalized diffusion phenomenon) between the MGT equation \eqref{Eq_MGT} and the linearized Kuznetsov's equation \eqref{Lienar_Kuznetsov} under some data conditions. This result completely answers the question proposed in \cite[Remark 3.2]{Chen-Ikehata=2021}.
\end{remark}
\begin{proof}
If one employs triangle inequalities between \eqref{eq:2.-1}, \eqref{eq:2.44} and those in Proposition \ref{prop:2.5}, then we obtain our desired difference with $\ell=0,1,2$, which can be controlled by
\begin{align*}
	\chi_{\intt}(\xi)|\xi|^{\ell}\mathrm{e}^{-c|\xi|^2t}\left(|\widehat{\psi}_0(\xi)|+|\widehat{\psi}_1(\xi)|+|\widehat{\psi}_2(\xi)|\right).
\end{align*}
With the aid of \eqref{Eq_05}, we complete the proof.
\end{proof}

Next, we state the second-order profiles, which leads to faster decay estimates than the subtraction of the first-order profiles in Theorem \ref{Thm_First_Order_Prof}. 
%
\begin{theorem}\label{Thm_Second_Order_Prof}
Under the same assumption as those in Theorem \ref{Thm_First_Order_Prof}, the refined estimates hold
\begin{align}
	&\left\|\partial_t^{\ell}\psi(t,\cdot)-\left(\frac{1}{|D|}\mathrm{e}^{\frac{\delta}{2}\Delta t}\partial_t^{\ell}\sin(|D|t)+\ml{N}_{\ell}(t,|D|)\right)\Psi_{1,2}(\cdot)-\mathrm{e}^{\frac{\delta}{2}\Delta t}\partial_t^{\ell}\cos(|D|t)\Psi_{0,2}(\cdot)\right\|_{\dot{H}^k}\notag\\
	&\qquad\lesssim (1+t)^{-\frac{n}{4}-\frac{k+1+\ell}{2}}\|(\psi_0 ,\psi_1 ,\psi_2 )\|_{\ml{A}_{(k-2+\ell)^+}}\label{eq:2.12}
\end{align}
for $0\leqslant k\leqslant s+2-\ell$ with $\ell=0,1,2$.
\end{theorem}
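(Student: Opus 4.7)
The plan is to mirror the proof of Theorem \ref{Thm_First_Order_Prof}, but replace the first-order approximations by the second-order refinements already contained in Propositions \ref{prop:2.4} and \ref{prop:2.5}. Viewing the profile in \eqref{eq:2.12} as the sum of a $\widehat{\Psi}_{1,2}$-piece coming from $\ml{G}_1$ (which carries the correction $\widehat{\ml N}_\ell$) and a $\widehat{\Psi}_{0,2}$-piece coming from $\ml{G}_2$, I would split the Fourier-side error in the form
$$
\partial_t^\ell \widehat{\psi}-\bigl(\text{profile}\bigr) = \bigl(\partial_t^\ell\widehat{\psi}-\partial_t^\ell\widehat{\ml{G}}_1-\partial_t^\ell\widehat{\ml{G}}_2\bigr)+\bigl(\partial_t^\ell\widehat{\ml{G}}_1-\text{$\widehat{\Psi}_{1,2}$-piece}\bigr)+\bigl(\partial_t^\ell\widehat{\ml{G}}_2-\text{$\widehat{\Psi}_{0,2}$-piece}\bigr).
$$

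On the interior zone $\ml{Z}_{\intt}(\varepsilon_0)$ each of the three summands is bounded pointwise by
$$
|\xi|^{\ell+1}\,\mathrm{e}^{-c|\xi|^2 t}\bigl(|\widehat{\psi}_0(\xi)|+|\widehat{\psi}_1(\xi)|+|\widehat{\psi}_2(\xi)|\bigr):
$$
the first by \eqref{eq:2.0}; the second by \eqref{eq:2.46} when $\ell=0$ and by its $\ell=1,2$ refinements in Proposition \ref{prop:2.5}; the third by \eqref{eq:2.45} together with the corresponding $\ml{G}_2$-estimates in Proposition \ref{prop:2.5}. The extra power of $|\xi|$ relative to the first-order proof is exactly what produces the additional decay factor $(1+t)^{-1/2}$ in \eqref{eq:2.12}. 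On $\ml{Z}_{\bdd}(\varepsilon_0,N_0)\cup\ml{Z}_{\extt}(N_0)$, the solution side is exponentially small by the pointwise bounds recalled in Subsection \ref{Sub_Sec_Extend}, and the profile side is exponentially small thanks to the Gaussian $\mathrm{e}^{-\delta|\xi|^2 t/2}$ carried by $\widehat{\ml{J}}_{0,1}$ and $\widehat{\ml{N}}_\ell$ together with $|\xi|$ being bounded below (so that the $t|\xi|^2$ factor in $\widehat{\ml{N}}_\ell$ is absorbed).

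To conclude, I would multiply by $|\xi|^k$, apply Plancherel, and invoke the sharp low-frequency bound \eqref{Eq_05} with $s=k+\ell+1$, obtaining the desired decay rate $(1+t)^{-(k+\ell+1)/2-n/4}$; on the interior side $|\widehat{\psi}_j(\xi)|$ is controlled by $\|\psi_j\|_{L^1}$, and on the remaining zones the inverse powers of $|\xi|$ that appear in $\widehat{K}_{0,1,2}$ as $|\xi|\to\infty$ account for precisely the regularity index $(k-2+\ell)^+$ in $\ml{A}_{(k-2+\ell)^+}$. I expect the main obstacle to be bookkeeping rather than analysis: one must verify term by term that the profile written in \eqref{eq:2.12} actually coincides with the second-order asymptotic expressions used in Propositions \ref{prop:2.4} and \ref{prop:2.5}, by identifying $\frac{1}{|D|}\mathrm{e}^{\frac{\delta}{2}\Delta t}\partial_t^\ell\sin(|D|t)$ with $\ml{J}_0,\ml{J}_1,-|D|^2\ml{J}_0$ for $\ell=0,1,2$, and similarly $\mathrm{e}^{\frac{\delta}{2}\Delta t}\partial_t^\ell\cos(|D|t)$ with $\ml{J}_1,-|D|^2\ml{J}_0,-|D|^2\ml{J}_1$. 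Once these identifications are in place, the remainder reduces to the same $L^2$-computation as in Theorem \ref{Thm_First_Order_Prof}, but with the envelope $|\xi|^{\ell+1}$ in place of $|\xi|^\ell$.
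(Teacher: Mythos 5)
Your proposal is correct and follows essentially the same route as the paper: the same three-term splitting via $\widehat{\ml{G}}_1,\widehat{\ml{G}}_2$, the same use of \eqref{eq:2.0}, \eqref{eq:2.45}, \eqref{eq:2.46} and Proposition \ref{prop:2.5} to get the common envelope $|\xi|^{\ell+1}\mathrm{e}^{-c|\xi|^2t}$ on $\ml{Z}_{\intt}(\varepsilon_0)$, and the same conclusion via \eqref{Eq_05} plus exponential decay on the remaining zones. The only difference is that you spell out the $\ell=1,2$ identifications and the high-frequency bookkeeping that the paper leaves implicit ("the other cases can be got by the same way"), which is consistent with its argument.
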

%
\begin{proof}
Here, we only prove the estimate \eqref{eq:2.12} with $\ell=0$. 
In what follows, we just need to concentrate on the case $\xi\in\ml{Z}_{\intt}(\varepsilon_0)$ by virtue of the other cases yielding some exponential decay estimates. Our proof strongly relies on the new expansions in Proposition \ref{Prop_Expansion_01}, especially, the odd-order powers of $|\xi|$ in higher-order expansion of $\mu_{\mathrm{I}}(|\xi|)$ play a crucial role. From the solution's formula \eqref{Represent_01}, \eqref{eq:2.0} with $\ell=0$, \eqref{eq:2.45} and \eqref{eq:2.46},
we get according to the triangle inequality
\begin{align*}
&\chi_{\intt}(\xi)\left|\widehat{\psi}(t,\xi)-\left(\widehat{\ml{J}}_0(t,|\xi|)+\widehat{\ml{N}}_0(t,|\xi|)  \right) \widehat{\Psi}_{1,2}(\xi)-\ml{J}_1(t,|\xi|)\widehat{\Psi}_{0,2}(\xi)\right|\\
&\qquad\lesssim\chi_{\intt}(\xi)\left|\widehat{\psi}(t,\xi)-\widehat{\ml{G}}_1(t,\xi) -\widehat{\ml{G}}_2(t,\xi) \right|+\chi_{\intt}(\xi)\left|\widehat{\ml{G}}_1(t,\xi)-\widehat{\ml{J}}_0(t,|\xi|)\widehat{\Psi}_{1,2}(\xi)-\widehat{\ml{N}}_{0}(t,|\xi|)\widehat{\Psi}_{1,2}(\xi) \right| \\
&\qquad\quad+\chi_{\intt}(\xi)\left|\widehat{\ml{G}}_2(t,\xi)-\widehat{\ml{J}}_1(t,|\xi|)\widehat{\Psi}_{0,2}(\xi) \right| \\
&\qquad\lesssim\chi_{\intt}(\xi)|\xi|\mathrm{e}^{-c|\xi|^2t}\left(|\widehat{\psi}_0(\xi)|+|\widehat{\psi}_1(\xi)|+|\widehat{\psi}_2(\xi)|\right),
\end{align*}
which means by the Hausdorff-Young inequality that
\begin{align*}
		& \left\|\chi_{\intt}(D) \left( \psi(t,\cdot)-\big(\ml{J}_0(t,|D|)+\ml{N}_0(t,|D|)  \big) \Psi_{1,2}(\cdot)-\ml{J}_1(t,|D|)\Psi_{0,2}(\cdot) \right)
	\right\|_{\dot{H}^{k}} \\
	&\qquad \lesssim \left\|\chi_{\intt}(\xi)|\xi|^{k+1}\mathrm{e}^{-c|\xi|^2t}\right\|_{L^2}\|(\psi_0 ,\psi_1 ,\psi_2 )\|_{(L^1 )^3} \\
	&\qquad \lesssim (1+t)^{-\frac{n}{4}-\frac{k+1}{2}}\|(\psi_0 ,\psi_1 ,\psi_2 )\|_{(L^1 )^3}.
\end{align*}
The other cases $\ell=1,2$ can be got by the same way. It completes the proof immediately.
\end{proof}

\begin{remark}\label{Remark_Linear_Profi}
	As investigated in Theorems \ref{Thm_First_Order_Prof} and \ref{Thm_Second_Order_Prof}, we discover some new connections between the MGT equation \eqref{Eq_MGT} and some linearized Kuznetsov's equations. It is one of the novelties for this work. First of all, we denote the solution to the linearized Kuznetsov's equation \eqref{Lienar_Kuznetsov}$_1$ with initial datum $\varphi(0,x)=\varphi_0(x)$ and  $\varphi_t(0,x)=\varphi_1(x)$ by $\varphi(t,x;\varphi_0,\varphi_1)$. According to the derived results in \cite{Ikehata-Todorova-Yordanov=2013,Ikehata=2014,Ikehata-Onoderta=2017}, the next large-time profile for Kuznetsov's equation holds:
	\begin{align*}
		\varphi\big(t,x;\varphi_0,\varphi_1\big)\sim \ml{J}_1(t,|D|)\varphi_0(x)+\ml{J}_0(t,|D|)\varphi_1(x).
	\end{align*}
	The derived results of this work, i.e. Theorems \ref{Thm_First_Order_Prof} and \ref{Thm_Second_Order_Prof}, show the large-time profiles for the viscous MGT equation \eqref{Eq_MGT} as
	\begin{align*}
		\psi(t,x)\sim \ml{J}_0(t,|D|)\big(\psi_1(x)+\tau\psi_2(x)\big),
	\end{align*}
	as well as the further profile satisfying
	\begin{align*}
		&\psi(t,x)-\ml{J}_0(t,|D|)\big(\psi_1(x)+\tau\psi_2(x)\big)\\
		&\qquad\qquad\quad\ \ \sim\ml{J}_1(t,|D|)\big(\psi_0(x)-\tau^2\psi_2(x)\big)+\frac{\delta(\delta-4\tau)}{8}t\Delta\ml{J}_1(t,|D|)\big(\psi_1(x)+\tau\psi_2(x)\big).
	\end{align*}
	By employing the same philosophy as the one in Remark \ref{Rem_MGT_Kuz}, we claim
	\begin{align*}
		\psi(t,x)\sim\varphi\big(t,x;0,\psi_1+\tau\psi_2\big),
	\end{align*}
	for the first-order profile, and
	\begin{align*}
		\psi(t,x)\sim \varphi\big(t,x;\psi_0-\tau^2\psi_2,\psi_1+\tau\psi_2\big)+\frac{\delta(\delta-4\tau)}{8}t\Delta\varphi\big(t,x;0,\psi_1+\tau\psi_2\big),
	\end{align*}
	for the second-order profile as $t\gg1$. The last approximations hint the connection between viscous MGT equation and the linear Kuznetsov's equation for large-time.
\end{remark}
Let us end this subsection with the asymptotic profiles of the solutions to \eqref{Eq_MGT} into the one of $K_2(t,x)\ast \psi_2(x)$, 
which plays essential character to investigate the asymptotic profiles for the nonlinear problem. Comparing \eqref{Representation_Fourier_u} with \eqref{Represent_01}, 
we achieve 
\begin{align*}
	\widehat{K}_2(t,|\xi|) =\frac{-\mathrm{e}^{\lambda_1t}+\cos(\mu_{\mathrm{I}}t)\mathrm{e}^{\mu_{\mathrm{R}}t}}{2\mu_{\mathrm{R}}\lambda_1-\mu_{\mathrm{I}}^2-\mu_{\mathrm{R}}^2-\lambda_1^2}-\frac{(\mu_{\mathrm{R}}-\lambda_1)\sin(\mu_{\mathrm{I}}t)\mathrm{e}^{\mu_{\mathrm{R}}t}}{\mu_{\mathrm{I}}(2\mu_{\mathrm{R}}\lambda_1-\mu_{\mathrm{I}}^2-\mu_{\mathrm{R}}^2-\lambda_1^2)}.
\end{align*}
We recall the observations \eqref{eq:2.81}-\eqref{eq:2.83} with 
\begin{align*}
\partial_{t}^{3} \widehat{\ml{G}}_1(t,\xi) &
\sim -\widehat{\ml{J}}_1(t,|\xi|)|\xi|^{2}\widehat{\Psi}_{1,2}(\xi),\\
\partial_{t}^{3} \widehat{\ml{G}}_1(t,\xi) + \widehat{\ml{J}}_1(t,|\xi|)|\xi|^{2} \widehat{\Psi}_{1,2}(\xi) &\sim\delta\left(\frac{4\tau-\delta}{8}|\xi|^2t+\frac{3}{2}\right)|\xi|^4\widehat{\ml{J}}_0(t,|\xi|)\widehat{\Psi}_{1,2}(\xi)=:\widehat{\ml{N}}_3(t,|\xi|)\widehat{\Psi}_{1,2}(\xi),	
\end{align*}
and
\begin{align*}
	 \partial_{t}^{3} \widehat{\ml{G}}_2(t,\xi) &\sim \widehat{\ml{J}}_0(t,|\xi|)|\xi|^{4}\widehat{\Psi}_{0,2}(\xi).
\end{align*}
In conclusion, since $\widehat{\Psi}_{1,2}(\xi)=\widehat{\psi}_1(\xi)+\tau\widehat{\psi}_2(\xi)$, we say
\begin{align*}
	\partial_{t}^{\ell}\widehat{K}_2(t,|\xi|) \sim \frac{\tau}{|\xi|} \mathrm{e}^{-\frac{\delta}{2}|\xi|^{2} t} \partial_{t}^{\ell} \sin(|\xi| t) 
\end{align*}
as the first-order asymptotic profiles, and 
\begin{align*}
	\partial_{t}^{\ell}\widehat{K}_2(t,|\xi|) - \frac{\tau}{|\xi|} \mathrm{e}^{-\frac{\delta}{2}|\xi|^{2} t} \partial_{t}^{\ell} \sin(|\xi| t) 
\sim \tau\widehat{\ml{N}}_{\ell}(t,|\xi|)
\end{align*}
as  the second-order asymptotic profiles,
where $\ell=0,\dots,3$.
As a result, we immediately have the following estimates.
\begin{coro}\label{coro:2.2}
	Let $\psi_2\in H^{s+1}\cap L^1$ for $s\in[0,\infty)$.
Then, the following decay estimates hold with $\ell=0,\dots,3$:
\begin{align}\label{eq:43}
	 &\left\|(\partial_{t}^{\ell} K_2)(t,\cdot) \ast \psi_2(\cdot)-\left(\frac{\tau}{|D|}\mathrm{e}^{\frac{\delta}{2}\Delta t}\partial_t^{\ell}\sin(|D|t)+\tau\ml{N}_{\ell}(t,|D|)^{m}\right)\psi_2(\cdot) \right\|_{\dot{H}^{s} }\notag\\
&\qquad\lesssim(1+t)^{-\frac{s+\ell+m}{2}-\frac{n}{4}} \|\psi_2 \|_{\dot{H}^{(s-2+\ell)^+} \cap L^1} 	
\end{align}
with $m=0,1$ denoting the consideration of second-order profiles.
\end{coro}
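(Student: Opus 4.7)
The plan is to combine the refined pointwise asymptotics of $\partial_t^\ell\widehat{K}_2(t,|\xi|)$ for small frequencies (which were already collected heuristically in the paragraph preceding the statement) with the exponential decay of the kernel on bounded and exterior frequencies, and then pass to $\dot H^s$ norms via Plancherel and Hausdorff--Young. Concretely, I would split
\begin{align*}
(\partial_t^\ell K_2)(t,\cdot)\ast\psi_2 - \Bigl(\tfrac{\tau}{|D|}\mathrm{e}^{\frac{\delta}{2}\Delta t}\partial_t^\ell\sin(|D|t)+\tau\,\ml{N}_\ell(t,|D|)^m\Bigr)\psi_2
\end{align*}
via the cut-offs $\chi_{\intt}+\chi_{\bdd}+\chi_{\extt}$ and estimate each piece separately.

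For the interior piece I would first upgrade the heuristic identities stated before the corollary to rigorous pointwise bounds:
\begin{align*}
\chi_{\intt}(\xi)\Bigl|\partial_t^\ell\widehat{K}_2(t,|\xi|)-\tfrac{\tau}{|\xi|}\mathrm{e}^{-\frac{\delta}{2}|\xi|^2 t}\partial_t^\ell\sin(|\xi|t)\Bigr|&\lesssim \chi_{\intt}(\xi)|\xi|^\ell\mathrm{e}^{-c|\xi|^2 t},\\
\chi_{\intt}(\xi)\Bigl|\partial_t^\ell\widehat{K}_2(t,|\xi|)-\tfrac{\tau}{|\xi|}\mathrm{e}^{-\frac{\delta}{2}|\xi|^2 t}\partial_t^\ell\sin(|\xi|t)-\tau\widehat{\ml{N}}_\ell(t,|\xi|)\Bigr|&\lesssim \chi_{\intt}(\xi)|\xi|^{\ell+1}\mathrm{e}^{-c|\xi|^2 t},
\end{align*}
for $\ell=0,1,2,3$. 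For $\ell=0,1,2$ these follow directly from Propositions \ref{prop:2.4} and \ref{prop:2.5} applied to the representation obtained by matching $\widehat{K}_2$ with the $\widehat{\psi}_2$-coefficients of $\widehat{\ml{G}}_1$ and $\widehat{\ml{G}}_2$ (equivalently, by using Theorems \ref{Thm_First_Order_Prof} and \ref{Thm_Second_Order_Prof} with $\psi_0=\psi_1\equiv 0$, noting $\Psi_{1,2}=\tau\psi_2$ and $\Psi_{0,2}=-\tau^2\psi_2$, so that the $\ml{J}_1$-profile for the first-order statement is absorbed into the remainder via $|\xi|^{\ell+1}\mathrm{e}^{-c|\xi|^2 t}$). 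Multiplying by $|\xi|^s|\widehat{\psi}_2(\xi)|\leqslant|\xi|^s\|\psi_2\|_{L^1}$ and using \eqref{Eq_05}, I get
\begin{align*}
\bigl\|\chi_{\intt}(\xi)|\xi|^{s+\ell+m}\mathrm{e}^{-c|\xi|^2 t}\bigr\|_{L^2}\|\psi_2\|_{L^1}\lesssim(1+t)^{-\frac{s+\ell+m}{2}-\frac{n}{4}}\|\psi_2\|_{L^1},
\end{align*}
which is the desired rate.

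For the bounded and exterior pieces, both $\partial_t^\ell\widehat{K}_2$ (by the pointwise bounds listed just before Proposition \ref{Prop_Estimate_Solution_itself}) and the profile $\frac{\tau}{|\xi|}\partial_t^\ell\sin(|\xi|t)\mathrm{e}^{-\frac{\delta}{2}|\xi|^2 t}$ decay exponentially in $t$ outside a neighbourhood of zero, uniformly up to polynomial losses in $|\xi|$ of order $|\xi|^{\ell-2}$. After multiplying by $|\xi|^s$ and $|\widehat\psi_2|$, a standard application of Plancherel absorbs these losses into $\|\psi_2\|_{\dot H^{(s-2+\ell)^+}}$ and delivers an $\mathrm{e}^{-ct}$ tail, which is dominated by the polynomial rate produced by the interior piece. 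Combining the three contributions by the triangle inequality yields \eqref{eq:43}.

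The main obstacle is the case $\ell=3$, which lies outside the range of Propositions \ref{prop:2.4} and \ref{prop:2.5}. For this, I would differentiate the expansions of $\widehat{\ml{G}}_1,\widehat{\ml{G}}_2$ one more time and use the heuristic comparisons
\begin{align*}
\partial_t^3\widehat{\ml{G}}_1(t,\xi)\sim -\widehat{\ml{J}}_1(t,|\xi|)|\xi|^2\widehat{\Psi}_{1,2}(\xi),\qquad \partial_t^3\widehat{\ml{G}}_2(t,\xi)\sim \widehat{\ml{J}}_0(t,|\xi|)|\xi|^4\widehat{\Psi}_{0,2}(\xi),
\end{align*}
recorded before the corollary, together with the sharper second-order remainder $\widehat{\ml{N}}_3$. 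The rigorous error control repeats, verbatim, the four-term decomposition used in the proof of Proposition \ref{prop:2.4}, with $\sin(\mu_{\mathrm I}t)$ replaced by $\partial_t^3\sin(\mu_{\mathrm I}t)$: the mean value theorem and the odd/even parity statement in Proposition \ref{Prop_Expansion_01} give each term the extra factor of $|\xi|$ or $|\xi|^2$ required for the first- and second-order bounds. Once these interior pointwise estimates are in place, the rest of the argument is identical to the $\ell=0,1,2$ case.
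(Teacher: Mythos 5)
Your overall route is the one the paper itself takes: the paper's ``proof'' of Corollary \ref{coro:2.2} is exactly the computation preceding it, namely matching $\widehat{K}_2$ with the $\widehat{\psi}_2$-coefficients of $\widehat{\ml{G}}_1$ and $\widehat{\ml{G}}_2$ (equivalently $\psi_0=\psi_1\equiv0$, $\Psi_{1,2}=\tau\psi_2$, $\Psi_{0,2}=-\tau^2\psi_2$), reading off the small-frequency profiles including the $\partial_t^3$ expansions and $\widehat{\ml{N}}_3$, and then invoking \eqref{Eq_05} for $\xi\in\ml{Z}_{\intt}(\varepsilon_0)$ together with exponential decay and the $\dot{H}^{(s-2+\ell)^+}$ regularity for the remaining frequencies. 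Your $m=0$ argument and your treatment of $\ell=3$ (differentiating the expansions once more and repeating the four-term decomposition of Proposition \ref{prop:2.4}) coincide with this, just written out in more detail.

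There is, however, a concrete gap in your $m=1$ step: the second displayed pointwise bound, $\chi_{\intt}(\xi)\bigl|\partial_t^\ell\widehat{K}_2-\tfrac{\tau}{|\xi|}\mathrm{e}^{-\frac{\delta}{2}|\xi|^2t}\partial_t^\ell\sin(|\xi|t)-\tau\widehat{\ml{N}}_\ell\bigr|\lesssim\chi_{\intt}(\xi)|\xi|^{\ell+1}\mathrm{e}^{-c|\xi|^2t}$, does not ``follow directly'' from Propositions \ref{prop:2.4} and \ref{prop:2.5}, and as written it is one power of $|\xi|$ too strong. With $\psi_0=\psi_1\equiv0$ one has $\Psi_{0,2}=-\tau^2\psi_2\neq0$, and the cosine block of $\widehat{K}_2$ (the $\widehat{\ml{G}}_2$-part, whose coefficient $1/(2\mu_{\mathrm{R}}\lambda_1-\mu_{\mathrm{I}}^2-\mu_{\mathrm{R}}^2-\lambda_1^2)$ tends to $-\tau^2$) contributes $-\tau^2\partial_t^\ell\bigl[\cos(\mu_{\mathrm{I}}t)\mathrm{e}^{\mu_{\mathrm{R}}t}\bigr]$, which is only $O(|\xi|^{\ell}\mathrm{e}^{-c|\xi|^2t})$; Proposition \ref{prop:2.5} yields the extra factor $|\xi|$ for $\widehat{\ml{G}}_2$ only after its own $\ml{J}$-profile has been subtracted, not when that profile is dumped into the remainder. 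So to reach the rate $(1+t)^{-\frac{s+\ell+1}{2}-\frac{n}{4}}$ you must keep the term $-\tau^2\mathrm{e}^{\frac{\delta}{2}\Delta t}\partial_t^\ell\cos(|D|t)\psi_2$ in the subtracted profile (this is precisely how the second-order information is actually used later: note the $\tau^2M_{00}$-terms inside $\phi^{(2,\ell)}$ in Corollary \ref{coro:4.2}), or else justify why it can be neglected for the data at hand. The terse display before the corollary in the paper glosses over the same point, but in your write-up this is the step that fails; for $m=0$ the cosine contribution sits at the admissible order $|\xi|^\ell$, so that half of your argument is sound.
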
 

\subsection{Approximation of solutions to the MGT equation}
Eventually, our main task is to derive the approximation formulas for $\partial_t^{\ell}\psi(t,\cdot)$ with $\ell=0,1,2$, the solutions to \eqref{Eq_MGT}, by diffusion-waves $J_0=J_0(t,x)$ and $J_1=J_1(t,x)$ such that
\begin{align*}
	J_0(t,x):=\mathcal{F}^{-1}_{\xi\to x} \left( \frac{\sin(|\xi|t)}{|\xi|}\mathrm{e}^{-\frac{\delta}{2}|\xi|^2t} \right)
	\ \ \mbox{and}\ \  J_1(t,x):=\mathcal{F}^{-1}_{\xi\to x} \left(\cos(|\xi|t)\,\mathrm{e}^{-\frac{\delta}{2}|\xi|^{2} t} \right),
\end{align*}
as is well-known, which will contribute to the lower bound estimates of the norms of $\psi(t,\cdot)$ as $t\gg1$. 
To do so, strongly motivated by asymptotic profiles, we define the auxiliary functions by the linear combination of diffusion-waves: 
\begin{align*}
\psi^{(1, \ell)}(t,x) := 
\begin{cases} 
(M_{1}+\tau M_{2} )J_0&\mbox{if} \ \ \ell=0,\\ 
(M_{1}+\tau M_{2} )J_1 &\mbox{if} \ \ \ell=1,\\ 
(M_{1}+\tau M_{2} ) \Delta J_0 &\mbox{if} \ \ \ell=2,  
\end{cases}
\end{align*} 
and the further ingenious approximations are 
\begin{align*}
	\psi^{(2, 0)}(t,x) &:= -(M_{1}+\tau M_{2} ) t \frac{\delta(4 \tau-\delta)}{8} \Delta J_1  +(P_{1}+\tau P_{2} ) \circ \nabla J_0 +(M_{0} -\tau^{2} M_{2} )J_1,\\
	\psi^{(2, 1)}(t,x) &:= (M_{1}+\tau M_{2} ) \delta\left(\frac{4\tau-\delta}{8}t\Delta-\frac{1}{2}\right) \Delta J_0  +(P_{1}+\tau P_{2} ) \circ \nabla J_1 - (M_{0} -\tau^{2} M_{2} )\Delta J_0,\\
	\psi^{(2, 2)}(t,x) & := (M_{1}+\tau M_{2} ) \delta\left(\frac{4\tau-\delta}{8}t\Delta-1\right)\Delta J_1  - (P_{1}+\tau P_{2} ) \circ \nabla\Delta J_0 - (M_{0} -\tau^{2} M_{2} )\Delta J_1.
\end{align*}
\begin{prop}\label{prop:2.6}
Under the same assumption as those in Theorem \ref{Thm_First_Order_Prof}, 
the following first-order approximations hold as $t\gg1$:
\begin{align}\label{eq:2.24}
	\left\|\partial_{t}^{\ell} \psi(t,\cdot)-\psi^{(1, \ell)}(t,\cdot)
	\right\|_{\dot{H}^{k}}=o\big(\ml{D}_{n,k+\ell}(t) \big)
\end{align}
for $0 \leqslant k \leqslant s+2-\ell$ with $\ell=0,1,2$.
\end{prop}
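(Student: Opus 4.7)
The plan is to combine Theorem \ref{Thm_First_Order_Prof} with a Riemann-Lebesgue type argument applied to the factor $\widehat{\Psi}_{1,2}(\xi)-\widehat{\Psi}_{1,2}(0)$. Introduce the three kernels $K_0:=J_0$, $K_1:=J_1$, $K_2:=\Delta J_0$; an elementary computation with $\partial_t\sin(|\xi|t)=|\xi|\cos(|\xi|t)$ and $\partial_t^2\sin(|\xi|t)=-|\xi|^2\sin(|\xi|t)$ shows that the first-order profile appearing in \eqref{eq:2.9} is exactly $K_\ell\ast\Psi_{1,2}$, while $\widehat{\Psi}_{1,2}(0)=M_1+\tau M_2$ means that $\psi^{(1,\ell)}=(M_1+\tau M_2)K_\ell$. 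Since $(1+t)^{-n/4-(k+\ell)/2}=o(\ml{D}_{n,k+\ell}(t))$ in every regime of $n,k,\ell$ covered by the statement, the triangle inequality together with Theorem \ref{Thm_First_Order_Prof} reduces the proof to establishing
\begin{align*}
	\|K_\ell(t,\cdot)\ast\Psi_{1,2}(\cdot)-(M_1+\tau M_2)K_\ell(t,\cdot)\|_{\dot{H}^k}=o\big(\ml{D}_{n,k+\ell}(t)\big)
\end{align*}
as $t\to\infty$, for each admissible pair $(k,\ell)$.

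Next, I would apply Plancherel's identity to rewrite the previous norm as
\begin{align*}
	\int_{\mb{R}^n}|\xi|^{2k}|\widehat{K}_\ell(t,|\xi|)|^2\big|\widehat{\Psi}_{1,2}(\xi)-\widehat{\Psi}_{1,2}(0)\big|^2\,\mathrm{d}\xi,
\end{align*}
and split the integration at an auxiliary threshold $|\xi|=\varepsilon\in(0,\varepsilon_0]$. On $\{|\xi|\geqslant\varepsilon\}$ the Gaussian factor $\mathrm{e}^{-\delta|\xi|^2t}$ present in every $\widehat{K}_\ell$, combined with $\|\widehat{\Psi}_{1,2}\|_{L^\infty}\leqslant\|\Psi_{1,2}\|_{L^1}$, yields a bound of order $\mathrm{e}^{-c\varepsilon^2 t}$ times a constant depending only on $\varepsilon$ and the data. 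This is exponentially small in $t$ and hence $o(\ml{D}_{n,k+\ell}(t)^2)$ for any fixed $\varepsilon>0$.

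For the low-frequency piece I would exploit the continuity of $\widehat{\Psi}_{1,2}$ at the origin. Since $\Psi_{1,2}\in L^1$, its Fourier transform is uniformly continuous on $\mb{R}^n$, so for any $\eta>0$ there exists $\varepsilon=\varepsilon(\eta)\in(0,\varepsilon_0]$ such that $|\widehat{\Psi}_{1,2}(\xi)-\widehat{\Psi}_{1,2}(0)|<\eta$ whenever $|\xi|\leqslant\varepsilon$. Coupling this with the sharp bounds
\begin{align*}
	\|\chi_{\intt}(\xi)|\xi|^k\widehat{K}_\ell(t,|\xi|)\|_{L^2}\lesssim \ml{D}_{n,k+\ell}(t),
\end{align*}
which follow from \eqref{Eq_05} and $|\sin(|\xi|t)|\leqslant 1$ by a case analysis on $\ell=0,1,2$ analogous to the one carried out in Propositions \ref{Prop_Estimate_Solution_itself} and \ref{Prop_Estimate_Time_Derivative}, forces the low-frequency contribution to be at most $C\eta\,\ml{D}_{n,k+\ell}(t)$. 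Putting the two pieces together and sending first $t\to\infty$ and then $\eta\to 0$, one obtains $\limsup_{t\to\infty}\ml{D}_{n,k+\ell}(t)^{-1}\|K_\ell\ast\Psi_{1,2}-(M_1+\tau M_2)K_\ell\|_{\dot{H}^k}\leqslant C\eta$, and the arbitrariness of $\eta$ closes the argument.

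The main obstacle I anticipate is the verification of the sharp kernel bound in the borderline cases $\ell=0$, $k=0$ with $n=1,2$, in which $\ml{D}_{n,0}(t)$ grows polynomially or logarithmically, respectively. There the trivial bound $|\sin(|\xi|t)|\leqslant 1$ misses the correct rate and one must retain the oscillation of $\sin(|\xi|t)$ to reproduce $\ml{D}_n(t)$; however, this sharp behaviour has already been recorded in the third line of \eqref{Eq_05} and was used to prove the first estimate in Proposition \ref{Prop_Estimate_Solution_itself}, so the difficulty amounts to careful bookkeeping in a case that has essentially been settled.
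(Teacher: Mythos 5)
Your proposal is correct and follows essentially the same route as the paper: the triangle inequality splits $\partial_t^{\ell}\psi-\psi^{(1,\ell)}$ into the error already controlled by Theorem \ref{Thm_First_Order_Prof} (which is $o(\ml{D}_{n,k+\ell}(t))$) plus the difference between $\ml{J}$-type multipliers applied to $\Psi_{1,2}$ and $(M_1+\tau M_2)$ times the corresponding diffusion-wave kernel. The only deviation is that where the paper simply invokes the approximation fact \eqref{eq:2.261} (by the method of Ikehata--Takeda/Michihisa), you prove it directly via Plancherel, a frequency splitting, uniform continuity of $\widehat{\Psi}_{1,2}$ at the origin, and the sharp kernel bounds \eqref{Eq_05} — a valid self-contained substitute that also handles the borderline cases $k=\ell=0$, $n=1,2$ correctly.
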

\begin{proof}
We only show the estimate \eqref{eq:2.24} with $\ell=0$ in detail.
The other estimates are able to be proved by the same way.
First of all,  we recall the effective facts that 
\begin{align}
 \left\| \ml{J}_j(t,|D|)g(\cdot) - \int_{\mb{R}^n}g(x)\mathrm{d}x\, J_{j}(t,\cdot) \right\|_{\dot{H}^{k}} =o\big(\ml{D}_{n,k+j}(t)\big) \label{eq:2.261}
\end{align}
for all $k \geqslant 0$ and $j=0,1$, by applying the method in \cite[Proposition 1.3]{Ikehata-Takeda-2019} (one may see also \cite{Michihisa-2021}). Now, we decompose the integrand $\psi(t,x)-\psi^{(1, 0)}(t,x)$ into two parts:
\begin{align*}
	\psi(t,x)-\psi^{(1, 0)}(t,x)  = \big( \psi(t,x)-\ml{J}_0(t,|D|) \Psi_{1,2}(x) \big) + \big( \ml{J}_0(t,|D|) \Psi_{1,2}(x)-(M_{1} +\tau M_{2}) J_{0}(t,x) \big).
\end{align*}
Therefore, the estimates \eqref{eq:2.9} and \eqref{eq:2.261} shows
\begin{align*}
	 \left\| \psi(t, \cdot)-\psi^{(1, 0)}(t,\cdot) \right\|_{\dot{H}^{k}} &\lesssim \left\| 
	\psi(t,\cdot)-\ml{J}_0(t,|D|) \Psi_{1,2}(\cdot) 
	\right\|_{\dot{H}^{k}} 
	+ \left\|  \ml{J}_0(t,|D|) \Psi_{1,2}(\cdot)-(M_{1} +\tau M_{2}) J_{0}(t,\cdot) \right\|_{\dot{H}^{k}} \\
	& \lesssim  (1+t)^{-\frac{k}{2}-\frac{n}{4}} \|(\psi_0 ,\psi_1 ,\psi_2 )\|_{\ml{A}_{(k-2)^+}} +o \big(\ml{D}_{n,k}(t)\big),
\end{align*}
as $t\gg1$, which is the desired estimate. We now complete the proof.
\end{proof}

\begin{prop}\label{prop:2.7}
Under the same assumption as those in Theorem \ref{Thm_First_Order_Prof} with additionally $(\psi_{1}, \psi_{2}) \in L^{1}_{1}\times L^{1}_{1}$, the following second-order approximations hold as $t\gg1$: 
\begin{align}\label{eq:2.27}
\left\|\partial_{t}^{\ell} \psi(t,\cdot)-\psi^{(1, \ell)}(t,\cdot)-\psi^{(2, \ell)}(t,\cdot)
\right\|_{\dot{H}^{k}}=o\big(\ml{D}_{n,k+1+\ell}(t) \big)
\end{align}
for $0 \leqslant k \leqslant s+2-\ell$ with $\ell=0,1,2$.
\end{prop}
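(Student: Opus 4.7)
The plan is to combine the kernel-level second-order expansion from Theorem~\ref{Thm_Second_Order_Prof} with moment expansions of the three diffusion-wave Fourier multipliers acting on the data, namely the two oscillatory-Gaussian operators $\frac{1}{|D|}\mathrm{e}^{\frac{\delta}{2}\Delta t}\partial_{t}^{\ell}\sin(|D|t)$, $\mathrm{e}^{\frac{\delta}{2}\Delta t}\partial_{t}^{\ell}\cos(|D|t)$, and the correction operator $\ml{N}_{\ell}(t,|D|)$. Concretely, Theorem~\ref{Thm_Second_Order_Prof} already reduces the difference $\partial_{t}^{\ell}\psi-\psi^{(1,\ell)}-\psi^{(2,\ell)}$, modulo $\mathcal{O}\bigl((1+t)^{-\frac{n}{4}-\frac{k+1+\ell}{2}}\bigr)$, to three error terms of the type ``$\mathcal{M}(t,|D|)g-(\text{moment approximation})$'', one for $g=\Psi_{1,2}$ acted on by the sine-type multiplier, one for $g=\Psi_{1,2}$ acted on by $\ml{N}_{\ell}$, and one for $g=\Psi_{0,2}$ acted on by the cosine-type multiplier. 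It remains to control each of them by $o(\ml{D}_{n,k+1+\ell}(t))$ in $\dot{H}^{k}$.

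The basic analytic tool is the second-order Taylor expansion of the Fourier transform: for $g\in L^{1}_{1}$ one has
\begin{align*}
	\widehat{g}(\xi)=M_{g}-i\,\xi\circ P_{g}+r_{g}(\xi),\qquad |r_{g}(\xi)|=o(|\xi|)\ \text{as }|\xi|\to0,
\end{align*}
where the $o$-rate follows by splitting the defining integral for $r_{g}$ at the scale $|x|=1/|\xi|$ and invoking dominated convergence against the finite measure $|x|\,|g(x)|\,dx$. Combined with the sharp localized bound \eqref{Eq_05}, this yields the abstract estimate that for any multiplier $\mathcal{M}$ with $|\chi_{\intt}(\xi)\mathcal{M}(t,\xi)|\lesssim\chi_{\intt}(\xi)|\xi|^{\alpha}\mathrm{e}^{-c|\xi|^{2}t}$,
\begin{align*}
	\bigl\|\chi_{\intt}(D)\mathcal{M}(t,|D|)\bigl(g-M_{g}\delta_{0}+P_{g}\circ\nabla\delta_{0}\bigr)\bigr\|_{\dot{H}^{k}}=o\bigl((1+t)^{-\frac{n}{4}-\frac{k+\alpha+1}{2}}\bigr),
\end{align*}
improving the first-order Ikehata--Takeda bound \eqref{eq:2.261} by an additional factor $(1+t)^{-\frac{1}{2}}$. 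Bounded and large frequencies contribute exponentially decaying remainders by the pointwise estimates recalled in Subsection~\ref{Sub_Sec_Extend}.

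Applying this abstract estimate term by term: the sine-type multiplier acting on $\Psi_{1,2}$ is approximated to second order, producing the two summands $M_{\Psi_{1,2}}\cdot(\text{derivative of }J_{0})$ and $P_{\Psi_{1,2}}\circ\nabla\cdot(\text{derivative of }J_{0})$; the multiplier $\ml{N}_{\ell}(t,|D|)$ already carries a prefactor $|D|^{2}$ (plus a factor of $t|D|^{4}$ in the $t$-growing piece), so only a first-order moment expansion $\Psi_{1,2}\mapsto M_{\Psi_{1,2}}$ is needed to achieve the target rate, which produces precisely the $t\Delta J_{0/1}$ and $\Delta J_{0/1}$ contributions appearing in $\psi^{(2,\ell)}$; the cosine-type multiplier acting on $\Psi_{0,2}$ is also expanded only to first order, yielding $(M_{0}-\tau^{2}M_{2})$ times a derivative of $J_{1}$. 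Collecting these contributions and using the identifications $M_{\Psi_{1,2}}=M_{1}+\tau M_{2}$, $P_{\Psi_{1,2}}=P_{1}+\tau P_{2}$, and $M_{\Psi_{0,2}}=M_{0}-\tau^{2}M_{2}$ reproduces the combinations defining $\psi^{(1,\ell)}+\psi^{(2,\ell)}$ verbatim; the cases $\ell=1,2$ are handled identically since the extra time derivatives act only on the oscillatory factors and the resulting symbols still satisfy the required localized bound.

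The main technical obstacle is promoting the pointwise small-frequency estimate $|r_{g}(\xi)|=o(|\xi|)$ to the claimed $\dot{H}^{k}$-norm $o$-bound uniformly in $t$, which is exactly the point where the hypothesis $(\psi_{1},\psi_{2})\in L^{1}_{1}\times L^{1}_{1}$ is used; under only $L^{1}$ regularity one recovers the first-order approximation of Proposition~\ref{prop:2.6} but not the second-order refinement. This is a standard dominated-convergence argument on the Fourier side: one decomposes $\xi$ into $|\xi|\leqslant\eta$ and $\eta\leqslant|\xi|\leqslant\varepsilon_{0}$, sends $\eta\to0$ after $t\to\infty$, and observes that in the remaining compact annulus the Gaussian weight supplies arbitrarily fast decay. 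Once this abstract remainder estimate is in place, the rest of the proof is algebraic bookkeeping, and no additional difficulty arises from the values of $\ell$ or $k$.
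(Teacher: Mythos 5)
Your proposal is correct and follows the same overall decomposition as the paper: first use Theorem \ref{Thm_Second_Order_Prof} to replace $\partial_t^{\ell}\psi$ by the three multiplier terms acting on the actual data (an error that is $O\bigl((1+t)^{-\frac{n}{4}-\frac{k+1+\ell}{2}}\bigr)$, hence negligible), then treat the $\ml{N}_{\ell}$-piece and the cosine-piece with only a zeroth-order moment replacement (the paper does this via \eqref{eq:2.261}, exactly as you argue, and like you it never needs $\psi_0\in L^1_1$), and finally carry out a genuine second-order moment approximation for the leading $\ml{J}_0$-term. The only real difference is where that last step is performed: you Taylor-expand $\widehat{\Psi}_{1,2}(\xi)=M-i\,\xi\circ P+r(\xi)$ with $r(\xi)=o(|\xi|)$ on the Fourier side and convert this to the $\dot H^k$ bound by splitting frequencies at $|\xi|=\eta$ and using the Gaussian weight, packaged as a reusable multiplier lemma; the paper instead works in physical space ($D_6$), splitting the convolution at $|y|=t^{1/8}$, Taylor-expanding the kernel $J_0$ to second order on the near region (which even yields an explicit extra factor $t^{-1/8}$) and using the $L^1_1$-tail $\int_{|y|\geqslant t^{1/8}}|y|(|\psi_1|+|\psi_2|)\,\mathrm{d}y\to0$ on the far region. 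Both routes use $L^1_1$ in the same place and give the same rates, so your argument is a legitimate frequency-side variant of the paper's proof. One bookkeeping caveat: your expansion naturally produces the approximation $M_{\Psi_{1,2}}J_0-P_{\Psi_{1,2}}\circ\nabla J_0$ (consistent with $J_0(t,x-y)\approx J_0(t,x)-y\circ\nabla J_0(t,x)$), whereas the paper's $\psi^{(2,0)}$ is written with $+(P_1+\tau P_2)\circ\nabla J_0$; so the claim that you recover the stated combinations ``verbatim'' should be checked against the paper's sign conventions rather than asserted — this sign does not affect the rates or the later optimality statements (where only $|\mathbb{B}|^2$ enters), but it should be made consistent.
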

\begin{proof}
Our strategy for the proof of Proposition \ref{prop:2.7} is similar to the proof of Proposition \ref{prop:2.6}.
We only show the estimate \eqref{eq:2.27} when $\ell=0$.
For this purpose, we divide LHS of \eqref{eq:2.27}  into four components
\begin{align*}
\psi(t,x)-\psi^{(1, 0)}(t,x)-\psi^{(2, 0)}(t,x)  = D_{5}(t,x)+D_{6}(t,x) +D_{7}(t,x) +D_{8}(t,x),	
\end{align*}
where each component is chose by
\begin{align*}
	D_{5}(t,x) & := \psi(t,x)-\left(\ml{J}_0(t,|D|)-t \frac{\delta(4 \tau-\delta)}{8} \Delta \ml{J}_1(t,|D|)  \right) \Psi_{1,2}(x)-\ml{J}_1(t,|D|)\Psi_{0,2}(x), \\
	D_{6}(t,x) & := \ml{J}_0(t,|D|) \Psi_{1,2}(x)-(M_{1} +\tau M_{2}) J_{0}(t,x)-(P_{1} +\tau P_{2}) \circ \nabla J_{0}(t,x), \\
	D_{7}(t,x) & := -t \frac{\delta(4 \tau-\delta)}{8} \Delta \big( \ml{J}_1(t,|D|)  \Psi_{1,2}(x) -(M_{1} +\tau M_{2}) J_{1}(t,x) \big), \\ 
	D_{8}(t,x) & := \ml{J}_1(t,|D|) \Psi_{0,2}(x)-(M_{0} -\tau^2 M_{2}) J_{1}(t,x).
\end{align*}
So, it is easy to see that 
\begin{align*}
	\| D_{5}(t,\cdot) \|_{\dot{H}^{k}}+ \| D_{7}(t,\cdot) \|_{\dot{H}^{k}} + \| D_{8}(t,\cdot) \|_{\dot{H}^{k}} \lesssim  t^{-\frac{k+1}{2}-\frac{n}{4}} \|(\psi_0 ,\psi_1 ,\psi_2 )\|_{\ml{A}_{(k-2)^+}} +o \big(\ml{D}_{n,k+1}(t)\big) 
\end{align*}
as $t \gg 1$ by considering \eqref{eq:2.12} and \eqref{eq:2.261}.
The remainder part of the proof is to verify the estimate 
\begin{align}
	\label{eq:2.292}
	\| D_{6}(t,\cdot) \|_{\dot{H}^{k}}= o \big(\ml{D}_{n,k+1}(t)\big) 
\end{align}
as $t \gg 1$. Recalling the definition of the symbol $\ml{J}_0(t,|D|)$, we observe $D_{6}(t,x)$ may decompose as
\begin{align*}
	D_{6,1}(t,x)& := \int_{|y| \leqslant t^{\frac{1}{8}} } \big(J_{0}(t,x-y) -J_{0}(t,x)-y \circ \nabla J_{0}(t,x)\big) \Psi_{1,2}(y) \mathrm{d}y,\\
	D_{6,2}(t,x)& := \int_{|y| \geqslant t^{\frac{1}{8}} } \big(J_{0}(t,x-y) -J_{0}(t,x) \big) \Psi_{1,2}(y) \mathrm{d}y- \int_{|y| \geqslant t^{\frac{1}{8}} } y \circ \nabla J_{0}(t,x) \Psi_{1,2}(y) \mathrm{d}y.
\end{align*}
For the control of $D_{6,1}(t,x)$, noting that 
\begin{align*}
	|J_{0}(t,x-y) -J_{0}(t,x)-y \circ \nabla J_{0}(t,x) | \lesssim |y|^{2} |\nabla^{2}J_{0}(t, x-\theta_{0} y)|
\end{align*}
equipping $\theta_{0} \in (0,1)$, we have
\begin{align} \label{eq:2.293}
	\|D_{6,1}(t,\cdot) \|_{\dot{H}^{k}} \lesssim t^{\frac{3}{8}} \| \nabla^{2}J_{0}(t, \cdot) \|_{\dot{H}^{k}} \|(\psi_{1}, \psi_{2})  \|_{(L^{1})^{2}} \lesssim t^{-\frac{k}{2}-\frac{1}{8}-\frac{n}{4} }  \|(\psi_{1}, \psi_{2})  \|_{(L^{1})^{2}}.
\end{align} 
Similarly, from the estimate that 
\begin{align*}
|J_{0}(t,x-y) -J_{0}(t,x)| \lesssim |y|\, |\nabla J_{0}(t, x-\theta_{1} y)|	
\end{align*}
for some $\theta_{1} \in (0,1)$, we arrive at the estimate as $t\gg1$ 
\begin{align*}
	\| D_{6,2}(t,\cdot) \|_{\dot{H}^{k}} &  \lesssim\| \nabla J_{0}(t,\cdot) \|_{\dot{H}^{k}}
	\int_{|y| \geqslant t^{\frac{1}{8}} } |y| \big(|\psi_{1}(y) |+ |\psi_{2} (y)|\big) \mathrm{d}y\\
	&   \lesssim t^{-\frac{k}{2}-\frac{n}{4} }  \int_{|y| \geqslant t^{\frac{1}{8}} } |y| \big(|\psi_{1}(y) |+ |\psi_{2} (y)|\big) \mathrm{d}y.
\end{align*}
This immediately leads to
\begin{align} \label{eq:2.294}
	\| D_{6,2}(t,\cdot) \|_{\dot{H}^{k}} =o( t^{-\frac{k}{2}-\frac{n}{4} })  
\end{align}
as $t \gg 1$ by virtue of
\begin{align*}
	\lim_{t \to \infty} \int_{|y| \geqslant t^{\frac{1}{8}} } |y| \big(|\psi_{1}(y) |+ |\psi_{2} (y)|\big) \mathrm{d}y=0,
\end{align*}
since $(\psi_{1}, \psi_{2}) \in L^{1}_{1}\times L^{1}_{1}$.
Combining \eqref{eq:2.293} and \eqref{eq:2.294}, we conclude \eqref{eq:2.292}, 
which proves the result.
\end{proof}
The estimates for first-order approximations $\psi^{(1, \ell)}(t, \cdot)$ and second-order approximations $\psi^{(2, \ell)}(t, \cdot)$ are vital to state the sharpness of the first-order approximation of the solutions to \eqref{Eq_MGT}.
\begin{prop} \label{prop:2.8}
	Let us take the same assumption as those in Theorem \ref{Thm_First_Order_Prof} and $(\psi_{1}, \psi_{2}) \in L^{1}_{1}\times L^{1}_{1}$.
	\begin{description}
		\item[(i)] Assume that $M_{1} +\tau M_{2} \neq 0$.
		Then, the following estimates hold:
		\begin{align}
			\label{eq:2.30}
			|M_{1} +\tau M_{2} | \ml{D}_{n,k+\ell}(t) \lesssim \|\psi^{(1, \ell)}(t, \cdot) \|_{\dot{H}^{k}} \lesssim \ml{D}_{n,k+\ell}(t) \|(\psi_0 ,\psi_1 ,\psi_2 )\|_{\ml{A}_k}
		\end{align}
		for $\ell=0$ when $k=0$ or $k\geqslant 1$, and $\ell=1,2$ when $k\geqslant 0$.
		\item[(ii)] Assume that $A_{0}:=(M_{1} +\tau M_{2})\frac{\tau(4 \tau-\delta)}{8} \neq 0$ or $A_{1}:= M_{0} -\tau^{2} M_{2} \neq 0$ or $\mathbb{B}:= P_{1} +\tau P_{2} \neq  0 $.
		Then, the following estimates hold:
		\begin{align}
			\label{eq:2.31}
			\widetilde{A} \ml{D}_{n,k+1+\ell}(t) \lesssim \|\psi^{(2, \ell)}(t, \cdot) \|_{\dot{H}^{k}} \lesssim \ml{D}_{n,k+1+\ell}(t) \|(\psi_0 ,\psi_1 ,\psi_2 )\|_{\ml{A}_k}
		\end{align}
		for $\ell=0,1,2$ and $k \geqslant 0$ as $t \gg 1$, where $\widetilde{A}=\widetilde{A}(A_{0},A_{1},\mathbb{B})$ is a positive constant.
	\end{description}
\end{prop}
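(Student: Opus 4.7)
The overall plan is to work on the Fourier side via Plancherel, rescale frequencies by $\xi=\eta/\sqrt{t}$ so that the Gaussian weight $\mathrm{e}^{-\delta|\xi|^{2}t}$ freezes to $\mathrm{e}^{-\delta|\eta|^{2}}$, and then invoke the Riemann-Lebesgue lemma to replace the residual rapidly oscillating factors $\cos^{2}(|\eta|\sqrt{t})$ and $\sin^{2}(|\eta|\sqrt{t})$ by their average $\tfrac{1}{2}$ against integrable weights. The Gaussian supplies an integrable dominant throughout, so the dominated convergence theorem will justify the passage to the limit and turn the rescaled integrals into strictly positive constants with the desired rates.

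For Part~(i), each $\psi^{(1,\ell)}$ is the fixed scalar $(M_{1}+\tau M_{2})$ times a single diffusion-wave profile, so \eqref{eq:2.30} reduces to the sharp two-sided asymptotics $\|J_0(t,\cdot)\|_{\dot{H}^k}\sim \ml{D}_{n,k}(t)$ and $\|J_1(t,\cdot)\|_{\dot{H}^k}\sim \ml{D}_{n,k+1}(t)$ as $t\gg1$. The upper halves follow from Plancherel combined with $|\sin|,|\cos|\leqslant1$ and \eqref{Eq_05}, after splitting the frequency integral at $|\xi|\sim t^{-1/2}$; in particular the slow-growth rates $\ml{D}_{n}(t)$ in dimensions $n\in\{1,2\}$ come from $|\xi|\leqslant t^{-1/2}$ where $\sin(|\xi|t)/|\xi|\sim t$. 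The matching lower bounds follow from the rescaling/Riemann-Lebesgue scheme above. Since $|M_{1}+\tau M_{2}|\leqslant \|\psi_1\|_{L^1}+\tau\|\psi_2\|_{L^1}\lesssim \|(\psi_0,\psi_1,\psi_2)\|_{\ml{A}_k}$, the right-hand inequality of \eqref{eq:2.30} will hold, while the non-degeneracy $M_{1}+\tau M_{2}\neq 0$ supplies the left-hand one.

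The main obstacle is Part~(ii), where three components compete. The upper bound in \eqref{eq:2.31} follows from the triangle inequality applied to the three summands defining $\psi^{(2,\ell)}$: each summand carries rate $\ml{D}_{n,k+1+\ell}(t)$, because the extra factor $t$ in the term $t\Delta J_1$ is exactly compensated by the additional $|\xi|^{2}$ from $\Delta$ after rescaling. For the lower bound I would compute the Fourier symbol explicitly; for $\ell=0$ it takes the structure
\begin{align*}
	\widehat{\psi^{(2,0)}}(t,\xi)=\big[(\alpha\, t|\xi|^{2}+A_1)\cos(|\xi|t)+i(\mb{B}\circ\xi)\tfrac{\sin(|\xi|t)}{|\xi|}\big]\mathrm{e}^{-\frac{\delta}{2}|\xi|^{2}t},
\end{align*}
where $\alpha$ is a constant proportional to $A_0$ (so $\alpha=0\Leftrightarrow A_0=0$). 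The cosine-part is real and the sine-part purely imaginary, hence $|\widehat{\psi^{(2,0)}}|^{2}$ has no cross term. Substituting $\xi=\eta/\sqrt{t}$ yields
\begin{align*}
	t^{\frac{n}{2}+k}\|\psi^{(2,0)}(t,\cdot)\|_{\dot{H}^k}^{2}=\int_{\mb{R}^n}|\eta|^{2k}\mathrm{e}^{-\delta|\eta|^{2}}\Big[(\alpha|\eta|^{2}+A_1)^{2}\cos^{2}(|\eta|\sqrt{t})+\tfrac{(\mb{B}\circ\eta)^{2}}{|\eta|^{2}}\sin^{2}(|\eta|\sqrt{t})\Big]\mathrm{d}\eta,
\end{align*}
which by Riemann-Lebesgue converges as $t\to\infty$ to the strictly positive limit
\begin{align*}
	\tfrac{1}{2}\int_{\mb{R}^n}|\eta|^{2k}\mathrm{e}^{-\delta|\eta|^{2}}\Big[(\alpha|\eta|^{2}+A_1)^{2}+\tfrac{(\mb{B}\circ\eta)^{2}}{|\eta|^{2}}\Big]\mathrm{d}\eta,
\end{align*}
the positivity holding under any one of the three non-degeneracy hypotheses because the bracketed integrand is a sum of squares that cannot vanish identically unless $\alpha=A_1=0$ and $\mb{B}=0$. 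The cases $\ell=1,2$ will follow from the same scheme after computing the corresponding symbols (the roles of cosine and sine may swap, but the real/imaginary orthogonality of the two groups of summands is preserved). The delicate technical point is ensuring that the Gaussian $\mathrm{e}^{-\delta|\eta|^{2}}$ dominates the polynomial growth in the brackets; this makes the Riemann-Lebesgue step routine via dominated convergence.
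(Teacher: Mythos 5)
Your treatment of part (ii) is essentially the paper's own proof: the explicit symbol of $\widehat{\psi}^{(2,0)}$ with real cosine-group and purely imaginary sine-group (so no cross term in the modulus squared), the rescaling $\eta=\sqrt{t}\,\xi$, the Riemann--Lebesgue lemma to kill the $\cos(2|\eta|\sqrt{t})$ contributions against the integrable Gaussian-weighted density, and positivity of the limiting integral unless $A_0=A_1=0$ and $\mathbb{B}=0$; the upper bound by the triangle inequality over the three summands (with $t$ compensated by the extra $|\xi|^2$ from $\Delta$) is also the argument in the paper. Up to an immaterial sign in front of the $A_0\,t|\xi|^2$ term, this part is fine and at the same level of detail as the paper (both treat only $\ell=0$ and declare $\ell=1,2$ analogous).

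The gap is in part (i), where the paper simply cites \cite{Ikehata=2014,Ikehata-Onoderta=2017} but you propose an argument. Your lower bound ``follows from the rescaling/Riemann--Lebesgue scheme above'' fails precisely in the cases responsible for the exceptional rates $\ml{D}_1(t)=(1+t)^{1/2}$ and $\ml{D}_2(t)=\sqrt{\ln(\mathrm{e}+t)}$, namely $\ell=0$, $k=0$, $n=1,2$. There the rescaled weight is $|\eta|^{-2}\mathrm{e}^{-\delta|\eta|^2}$, which is not integrable near $\eta=0$ in dimensions $1$ and $2$, so the ``replace $\sin^2(|\eta|\sqrt{t})$ by $\tfrac12$'' limit does not exist; and if you restrict to a region where the weight is integrable (say $|\eta|\geqslant 1$), the scheme only yields $\|J_0(t,\cdot)\|_{L^2}\gtrsim t^{1/4}$ for $n=1$ (and $\gtrsim 1$ for $n=2$), strictly weaker than the claimed sharp rates $t^{1/2}$ and $\sqrt{\ln t}$. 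The correct lower bound in these cases is obtained not by averaging the oscillation but by exploiting the low-frequency region, e.g. $|\xi|\leqslant 1/(2t)$, where $|\sin(|\xi|t)|\gtrsim|\xi|t$ and hence
\begin{align*}
	\|J_0(t,\cdot)\|_{L^2}^2\gtrsim t^2\int_{|\xi|\leqslant \frac{1}{2t}}\mathrm{e}^{-\delta|\xi|^2t}\,\mathrm{d}\xi\gtrsim t^{2-n},
\end{align*}
for $n=1$ (with the analogous intermediate-frequency computation producing the logarithm when $n=2$); this is exactly the device in the references the paper invokes. For $n\geqslant3$, or for $k\geqslant1$, or for $\ell=1,2$, the rescaled weight is integrable and your scheme does give the stated two-sided bounds, so the defect is confined to the growth/log cases --- but those are part of the statement of \eqref{eq:2.30}, so as written your proof of (i) is incomplete.
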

\begin{proof}
The proof of the estimate \eqref{eq:2.30} is well-established (for example, \cite{Ikehata=2014} and \cite{Ikehata-Onoderta=2017}). 
We only prove the estimate \eqref{eq:2.31} for $\ell=0$, since the other cases can be shown by an analogous method.  
Indeed, the direct calculation leads to the upper bound estimates 
\begin{align*}
	\|\psi^{(2, 0)}(t, \cdot) \|_{\dot{H}^{k}} \lesssim (1+t)^{-\frac{k}{2}-\frac{n}{4}} \|(\psi_0 ,\psi_1 ,\psi_2 )\|_{\ml{A}_k}
\end{align*}
by the facts that \eqref{eq:2.261} as well as
\begin{align*}
	t   \| \Delta J_1(t,\cdot ) \|_{\dot{H}^{k}} + \| \nabla J_0(t,\cdot ) \|_{\dot{H}^{k}} +\| J_1(t,\cdot ) \|_{\dot{H}^{k}}  \lesssim (1+t)^{-\frac{k}{2}-\frac{n}{4}}.
\end{align*}
In what follows, we show the estimate from the below
\begin{align*}
	 \|\psi^{(2, 0)}(t, \cdot) \|_{\dot{H}^{k}}\gtrsim (1+t)^{-\frac{k}{2}-\frac{n}{4}}.
\end{align*}
Here, we initially notice that 
\begin{align*}
\widehat{\psi}^{(2, 0)}(t, \xi)= \mathrm{e}^{-\frac{\delta}{2}  |\xi|^{2} t} \left( 
(-A_{0} t |\xi|^{2} +A_{1}) \cos(|\xi| t) + i \mathbb{B} \circ \frac{\xi}{|\xi|} \sin(|\xi|t) 
\right)
\end{align*}
and then 
\begin{align*}
	|\widehat{\psi}^{(2, 0)}(t, \xi)|^{2}= \mathrm{e}^{-\delta |\xi|^{2} t} \left( 
	(-A_{0} t |\xi|^{2} +A_{1})^{2} \cos^{2}(|\xi| t) + \left( \mathbb{B} \circ \frac{\xi}{|\xi|} \right)^2 \sin^{2}(|\xi|t) 
	\right).
\end{align*}
Therefore,
using the simple rules  $2\cos^{2}(|\xi| t)=1-\cos(2 |\xi| t)$, $2\sin^{2}(|\xi| t)=1+\cos(2 |\xi| t)$, and changing the variable such that $\eta = \sqrt{t} \xi$, one gains
\begin{align} \label{eq:2.320}
\| \psi^{(2, 0)}(t, \cdot) \|_{\dot{H}^{k}}^{2} =\|\,|\xi|^k \widehat{\psi}^{(2, 0)}(t, \xi) \|_{L^{2}}^{2}=\widetilde{I}_{1}(t)+\widetilde{I}_{2}(t)+\widetilde{I}_{3}(t)+\widetilde{I}_{4}(t),
\end{align}
where we denoted
\begin{align*}
\widetilde{I}_{1}(t) & := \frac{
t^{-\frac{n}{2}-k}
}{2}   
\int_{\mathbb{R}^{n}} \mathrm{e}^{-\delta |\eta|^{2}} |\eta|^{2k} (-A_{0}|\eta|^{2} +A_{1})^{2} \mathrm{d} \eta, \\
\widetilde{I}_{2}(t) & := -\frac{
t^{-\frac{n}{2}-k}
}{2}   
\int_{\mathbb{R}^{n}} \mathrm{e}^{-\delta |\eta|^{2}} |\eta|^{2k} (-A_{0}|\eta|^{2} +A_{1})^{2} \cos(2 |\eta| \sqrt{t}) \mathrm{d} \eta, \\
\widetilde{I}_{3}(t) & := \frac{
t^{-\frac{n}{2}-k}
}{2}   
\int_{\mathbb{R}^{n}} \mathrm{e}^{-\delta |\eta|^{2}} |\eta|^{2k} \left(\mathbb{B} \circ \frac{\eta}{|\eta|} \right)^{2} \mathrm{d} \eta, \\
\widetilde{I}_{4}(t) & := \frac{
t^{-\frac{n}{2}-k}
}{2}   
\int_{\mathbb{R}^{n}} \mathrm{e}^{-\delta |\eta|^{2}} |\eta|^{2k}\left(\mathbb{B} \circ \frac{\eta}{|\eta|} \right)^{2} \cos(2 |\eta| \sqrt{t}) \mathrm{d} \eta.
\end{align*}
For this reason, we claim that 
\begin{align}
\widetilde{I}_{1}(t)+\widetilde{I}_{3}(t)& \geqslant C t^{-k-\frac{n}{2}}, \label{eq:2.33} \\
\widetilde{I}_{2}(t)+\widetilde{I}_{4}(t) &=o(t^{-k-\frac{n}{2}}), \label{eq:2.34}
\end{align}
as $t \gg1$, where $C=C(|A_{0}|, |A_{1}|)>0$. The estimate \eqref{eq:2.34} is a direct consequence of the Riemann-Lebesgue theorem (cf. \cite{Folland-1995}).
This idea is firstly proposed in \cite{Ikehata=2014}.
More precisely, 
taking into account the polar coordinate, and denoting the measure of $\mathbb{S}^{n-1}$ by $|\mathbb{S}^{n-1}|$,
we easily see that 
\begin{align*}
2t^{k+\frac{n}{2}}\big(\widetilde{I}_{2}(t)+\widetilde{I}_{4}(t)\big) & = -|\mathbb{S}^{n-1}| \int_{0}^{\infty} \mathrm{e}^{-\delta r^{2}} |r|^{2k+n-1} (-A_{0}r^{2} +A_{1})^{2} \cos(2 r \sqrt{t}) \mathrm{d}r \\
& \quad + \int_{0}^{\infty} \mathrm{e}^{-\delta r^{2}} |r|^{2k+n-1} \cos(2 r \sqrt{t}) \mathrm{d}r \int_{\mathbb{S}^{n-1}} (\mathbb{B} \circ \omega)^{2} \mathrm{d} \omega\\
&= - \frac{|\mathbb{S}^{n-1}|}{2} \int_{\mathbb{R}} \mathrm{e}^{-\delta r^{2}} |r|^{2k+n-1} (-A_{0}r^{2} +A_{1})^{2} \cos(2 r \sqrt{t}) \mathrm{d}r \\
& \quad + \frac{1}{2} \int_{\mathbb{R}} \mathrm{e}^{-\delta r^{2}} |r|^{2k+n-1} \cos(2 r \sqrt{t}) \mathrm{d}r \int_{\mathbb{S}^{n-1}} (\mathbb{B} \circ \omega)^{2} \mathrm{d} \omega \\
& = o(1)
\end{align*}
as $t \gg 1$ by the Riemann-Lebesgue theorem,
where we used the facts that $\mathrm{e}^{-\delta r^{2}} |r|^{2k+n-1} (-A_{0}r^{2} +A_{1})^{2} $ and $\mathrm{e}^{-\delta r^{2}} |r|^{2k+n-1}$ belong to $L^{1}$ and they are even functions.
That leads to \eqref{eq:2.34} exactly. To show the estimate \eqref{eq:2.33}, we recall that if $t^{\frac{n}{2}+k} \widetilde{I}_{1}(t) =0$, then both $A_{0}=0$ and $A_{1}=0$.
In other words, if we assume that $A_{0} \neq 0$ or $A_{1} \neq 0$, then there exists a constant $C=C(|A_{0}|, |A_{1}|)>0$ such that $\widetilde{I}_{1}(t) \gtrsim t^{-\frac{n}{2}-k}$. On the other hand, noting that 
\begin{align*}
\int_{\mathbb{S}^{n-1}} \omega_{j} \omega_{k} \mathrm{d} \omega = 
\begin{cases}
 \frac{1}{n} |\mathbb{S}^{n-1}| & \mbox{if} \ \  j=k, \\
 0& \mbox{if} \ \  j \neq k, 
\end{cases}
\end{align*}
where $\omega = (\omega_{1}, \omega_{2}, \dots, \omega_{n})$, 
we use the polar coordinate again to compute 
\begin{align*}
\widetilde{I}_{3}(t)  & = \frac{
t^{-\frac{n}{2}-k}
}{2}   
\int_{0}^{\infty} \mathrm{e}^{-\delta r^{2}} |r|^{2k+n-1} \mathrm{d}r \int_{\mathbb{S}^{n-1}} \left(\mathbb{B} \circ \omega \right)^{2} \mathrm{d} \omega \\
& =
 \frac{
t^{-\frac{n}{2}-k}
}{2} 
|\mathbb{B}|^{2} |\mathbb{S}^{n-1}|
\int_{0}^{\infty} \mathrm{e}^{-\delta r^{2}} |r|^{2k+n-1} \mathrm{d}r \int_{\mathbb{S}^{n-1}} \mathrm{d} \omega,
\end{align*}
which implies that if $\mathbb{B} \neq 0$, we see that  $\widetilde{I}_{3}(t)  \geqslant C t^{-\frac{n}{2}-k}$ for some $C=C(|\mathbb{B}|)>0$ as $t\gg1$. Summing up, we obtain the estimate \eqref{eq:2.33}.
By \eqref{eq:2.320}, \eqref{eq:2.33} and \eqref{eq:2.34}, we have the estimate \eqref{eq:2.31} for $\ell=0$, which is the desired result.
We complete the proof of Proposition \ref{prop:2.8}.
\end{proof}
As some consequences of Propositions \ref{prop:2.6}-\ref{prop:2.7}, we conclude lower bound estimates for $\psi(t,\cdot)$, the solution to \eqref{Eq_MGT}. For example, the idea of the proof has been widely applied in damped wave equation \cite[Equation (1.9)]{Ikehata=2014}, or the methodology of optimal estimates in this paper.  
\begin{theorem}\label{thm:2.3}
Let us take the same assumption as those in Theorem \ref{Thm_First_Order_Prof} and $(\psi_{1}, \psi_{2}) \in L^{1}_{1}\times L^{1}_{1}$.
\begin{description}
	\item[(i)] Suppose that $M_{1}+\tau M_{2} \neq 0$.
	Then, the following optimal estimates hold as $t\gg1$:
	\begin{align*}
		|M_{1}+\tau M_{2}| \ml{D}_{n,k+\ell}(t) \lesssim \|\partial_t^{\ell}\psi(t,\cdot)
		\|_{\dot{H}^{k}}\lesssim \ml{D}_{n,k+\ell}(t)  \|(\psi_0 ,\psi_1 ,\psi_2 )\|_{\ml{A}_k} 
	\end{align*}
	for $\ell=0$ when $k=0$ or $k\geqslant 1$, and $\ell=1,2$ when $k\geqslant 0$.
	\item[(ii)] Suppose that $A_{0} \neq 0$ or $A_{1} \neq 0$ or $\mathbb{B} \neq  0$.
	Then, the following optimal estimates hold as $t\gg1$:
	\begin{align} \label{eq:2.41}
		\widetilde{A} \ml{D}_{n,k+1+\ell}(t) \lesssim \|\partial_t^{\ell}\psi(t,\cdot)-\psi^{(1, \ell)}(t,\cdot)
		\|_{\dot{H}^{k}}\lesssim \ml{D}_{n,k+1+\ell}(t) \|(\psi_0 ,\psi_1 ,\psi_2 )\|_{\ml{A}_k}
	\end{align}
	for $0 \leqslant k \leqslant s+2-\ell$ with $\ell=0,1,2$, where $\widetilde{A}=\widetilde{A}(A_{0},A_{1},\mathbb{B})$ is a positive constant.
\end{description}
\end{theorem}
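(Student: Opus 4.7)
The upper bounds in both parts~(i) and (ii) are essentially already at our disposal: for part~(i) they follow directly from the $\dot{H}^k$ estimates in Propositions~\ref{Prop_Estimate_Solution_itself}--\ref{Prop_Estimate_Time_Derivative}, while for part~(ii) they are precisely Theorem~\ref{Thm_First_Order_Prof} rewritten against the first-order profile $\psi^{(1,\ell)}$, since the Hausdorff--Young inequality applied to $\widehat{\ml{J}}_{0,1}(t,|\xi|)\widehat{\Psi}_{1,2}(\xi)$ gives exactly the stated time-rates. Hence the plan reduces entirely to establishing the lower bounds, following the three-step methodology announced at the beginning of Section~\ref{Sec_Prof}.

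For part~(i), I would invoke the reverse triangle inequality
\begin{align*}
\|\partial_t^{\ell}\psi(t,\cdot)\|_{\dot{H}^k}\;\geqslant\;\|\psi^{(1,\ell)}(t,\cdot)\|_{\dot{H}^k}\;-\;\|\partial_t^{\ell}\psi(t,\cdot)-\psi^{(1,\ell)}(t,\cdot)\|_{\dot{H}^k},
\end{align*}
bound the first term from below by $|M_1+\tau M_2|\,\ml{D}_{n,k+\ell}(t)$ via Proposition~\ref{prop:2.8}(i), and bound the second term by $o(\ml{D}_{n,k+\ell}(t))$ via Proposition~\ref{prop:2.6}. Under the hypothesis $M_1+\tau M_2\neq 0$ the leading profile absorbs the error as $t\gg1$, yielding the desired lower estimate.

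For part~(ii) the same machinery runs one order finer. I would apply
\begin{align*}
\|\partial_t^{\ell}\psi(t,\cdot)-\psi^{(1,\ell)}(t,\cdot)\|_{\dot{H}^k}\;\geqslant\;\|\psi^{(2,\ell)}(t,\cdot)\|_{\dot{H}^k}\;-\;\|\partial_t^{\ell}\psi(t,\cdot)-\psi^{(1,\ell)}(t,\cdot)-\psi^{(2,\ell)}(t,\cdot)\|_{\dot{H}^k},
\end{align*}
controlling the first term from below by $\widetilde{A}\,\ml{D}_{n,k+1+\ell}(t)$ through Proposition~\ref{prop:2.8}(ii) under the non-degeneracy $A_0\neq 0$ or $A_1\neq 0$ or $\mathbb{B}\neq 0$, and the second (faster-decaying) term by $o(\ml{D}_{n,k+1+\ell}(t))$ through Proposition~\ref{prop:2.7}. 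Combining with the corresponding upper bounds from Theorem~\ref{Thm_First_Order_Prof} delivers \eqref{eq:2.41}.

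Since each of the three ingredients --- the first-order approximation (Proposition~\ref{prop:2.6}), the second-order approximation requiring the $L^1_1$ moment assumption (Proposition~\ref{prop:2.7}), and the sharp lower bounds for the constructed auxiliary functions via the Riemann--Lebesgue cancellation in Fourier (Proposition~\ref{prop:2.8}) --- has already been verified, the argument here amounts to a clean triangle-inequality packaging. There is no genuine obstacle at this stage; the only point worth care is matching the admissible range $0\leqslant k\leqslant s+2-\ell$ with the regularity used in Propositions~\ref{prop:2.6}--\ref{prop:2.8}, which is automatic because those propositions were stated under exactly the assumptions of Theorem~\ref{Thm_First_Order_Prof}. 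The serious analytic work was done upstream in Steps 1--3a of the methodology.
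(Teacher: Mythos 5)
Your proposal is correct and is exactly the argument the paper intends: Theorem \ref{thm:2.3} is stated as a direct consequence of Propositions \ref{prop:2.6}--\ref{prop:2.8} via the reverse-triangle-inequality scheme announced in Step 3b of the methodology (the paper omits the details, pointing to \cite[Equation (1.9)]{Ikehata=2014}), with the upper bounds coming from Propositions \ref{Prop_Estimate_Solution_itself}--\ref{Prop_Estimate_Time_Derivative} and Theorem \ref{Thm_First_Order_Prof} just as you describe. No substantive difference from the paper's route.
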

\begin{remark}
The assumption on (ii) of Theorem \ref{thm:2.3} implies that
we can obtain the lower bound estimates \eqref{eq:2.41},
even if $P_{1}+\tau P_{2} =0$.
\end{remark}

\section{Global (in time) existence of Sobolev solution to the JMGT equation}\label{Section_GESDS_JMGT}
\subsection{Main result and discussion on global (in time) solution}
To begin with this section, let us state our main result for global (in time) existence of solution for the nonlinear Cauchy problem, i.e. the JMGT equation \eqref{JMGT_Dissipative} for any $n\geqslant 1$. 
\begin{theorem}\label{Thm_GESDS}
	Let 
	$s>(n/2-1)^+$ for all $n\geqslant 1$.
	Then, there exists a constant $\epsilon>0$ such that for all $(\psi_0,\psi_1,\psi_2)\in\ml{A}_s$ with $\|(\psi_0,\psi_1,\psi_2)\|_{\ml{A}_s}\leqslant \epsilon$, there is a uniquely determined Sobolev solution
	\begin{align*}
		\psi\in\ml{C}([0,\infty),H^{s+2})\cap\ml{C}^1([0,\infty),H^{s+1})\cap \ml{C}^2([0,\infty),H^s)
	\end{align*}
	to the JMGT equation \eqref{JMGT_Dissipative}. Furthermore, the following estimates hold:
	\begin{align*}
		\|\psi(t,\cdot)\|_{L^2}&\lesssim\ml{D}_n(t)\|(\psi_0,\psi_1,\psi_2)\|_{\ml{A}_s},\\
		\|\psi(t,\cdot)\|_{\dot{H}^1}&\lesssim (1+t)^{-\frac{n}{4}} \|(\psi_0,\psi_1,\psi_2)\|_{\ml{A}_s},\\
		\|\partial_t^{\ell}\psi(t,\cdot)\|_{L^2}&\lesssim (1+t)^{\frac{1}{2}-\frac{\ell}{2}-\frac{n}{4}}\|(\psi_0,\psi_1,\psi_2)\|_{\ml{A}_s}\ \ \mbox{if}\ \ \ell=1,2,\\
		\|\partial_{t}^{\ell}\psi(t,\cdot)\|_{\dot{H}^{s+2-\ell}}&\lesssim (1+t)^{-\frac{1}{2}-\frac{s}{2}-\frac{n}{4}}\|(\psi_0,\psi_1,\psi_2)\|_{\ml{A}_s}\ \ \mbox{if}\ \ \ell=0,1,2,
	\end{align*}
where the time-dependent coefficients $\ml{D}_n(t)$ were defined in Subsection \ref{subsection:Notation}.
\end{theorem}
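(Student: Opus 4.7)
The plan is to close a Banach fixed-point argument for Duhamel's operator in a time-weighted evolution space tailored to the estimates claimed in the theorem. I would introduce the space
\begin{align*}
X(T) := \left\{\psi \in \ml{C}([0,T],H^{s+2})\cap\ml{C}^1([0,T],H^{s+1})\cap\ml{C}^2([0,T],H^s) : \|\psi\|_{X(T)}<\infty\right\},
\end{align*}
where $\|\psi\|_{X(T)}$ is the supremum over $t\in[0,T]$ of the six weighted quantities appearing in the conclusion of Theorem \ref{Thm_GESDS}, namely $\ml{D}_n(t)^{-1}\|\psi(t,\cdot)\|_{L^2}$, $(1+t)^{n/4}\|\psi(t,\cdot)\|_{\dot{H}^1}$, together with the weighted $L^2$-norms of $\partial_t^{\ell}\psi$ and the weighted $\dot{H}^{s+2-\ell}$-norms of $\partial_t^{\ell}\psi$ for $\ell=0,1,2$. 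Propositions \ref{Prop_Estimate_Solution_itself} and \ref{Prop_Estimate_Time_Derivative} then guarantee that the linear part $\psi^L:=K_0\ast\psi_0+K_1\ast\psi_1+K_2\ast\psi_2$ satisfies $\|\psi^L\|_{X(T)}\lesssim \|(\psi_0,\psi_1,\psi_2)\|_{\ml{A}_s}$ uniformly in $T>0$.

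The essential new point is the treatment of the nonlinear source $\partial_t(\frac{B}{2A}(\psi_t)^2+|\nabla\psi|^2)$, whose naive Duhamel contribution would force us to control $\psi_t\psi_{tt}$ directly. Setting $F[\psi](s,x):=\frac{B}{2A}(\partial_s\psi(s,x))^2+|\nabla\psi(s,x)|^2$, I would integrate by parts in time inside Duhamel's formula and use $K_2(0,\cdot)=0$ to obtain the reformulated decomposition
\begin{align*}
\psi^{NL}(t,\cdot) = -\frac{1}{\tau}K_2(t,\cdot)\ast F[\psi](0,\cdot) + \frac{1}{\tau}\int_0^t(\partial_t K_2)(t-s,\cdot)\ast F[\psi](s,\cdot)\,\mathrm{d}s.
\end{align*}
No second time derivative ever appears again; moreover, the kernel $\partial_t K_2$ enjoys precisely the improved decay captured by Corollary \ref{coro_Estimate_Time_Derivative} (with $F[\psi](s,\cdot)$ in place of $\psi_2$). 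Estimating $F[\psi]$ in $L^1$, $L^2$ and $\dot{H}^{s+1}$ then reduces, via Gagliardo-Nirenberg interpolation, the fractional Leibniz rule, and the Sobolev embedding --- which is where the hypothesis $s>(n/2-1)^+$ enters --- to a bound by $\|\psi\|_{X(T)}^2$.

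It remains to verify that $\Phi[\psi]:=\psi^L+\psi^{NL}$ maps a small ball $B_R\subset X(T)$ into itself and is a contraction. For each of the six weighted norms one splits $\int_0^t=\int_0^{t/2}+\int_{t/2}^t$: on $[0,t/2]$ one exploits the $(L^1\cap L^2)$--$L^2$ type kernel decay \eqref{eq:2.3} for $\partial_t K_2$ together with $\|F[\psi](s,\cdot)\|_{L^1}\lesssim\|\psi\|_{X(T)}^2(1+s)^{-n/2}$, while on $[t/2,t]$ one uses the $L^2$--$L^2$ bound \eqref{eq:2.4} and the $\dot{H}^{s+1}$ regularity of $F[\psi]$. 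The contraction estimate follows verbatim from the bilinear identity $F[\psi]-F[\tilde{\psi}]=\frac{B}{2A}(\partial_s\psi+\partial_s\tilde{\psi})(\partial_s\psi-\partial_s\tilde{\psi})+\nabla(\psi+\tilde{\psi})\circ\nabla(\psi-\tilde{\psi})$. The main obstacle will be closing these nonlinear estimates in the low-dimensional cases $n=1,2$: here $\ml{D}_n(t)$ is non-decaying (growing for $n=1$, logarithmic for $n=2$) and $\|F[\psi](s,\cdot)\|_{L^1}$ is only marginally integrable ($n=2$) or non-integrable ($n=1$) in $s$, so matching exponents in the split-time integral requires precisely the gain of one temporal derivative purchased through the integration by parts and Corollary \ref{coro_Estimate_Time_Derivative}; without this gain the contraction cannot be closed. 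Once $\Phi$ is shown to be a contractive self-map of $B_R$ for sufficiently small $\epsilon$, the Banach fixed-point theorem yields the unique global Sobolev solution together with the stated decay estimates.
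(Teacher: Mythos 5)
Your plan is essentially the paper's own proof: the same time-weighted space $X_s(T)$, the same Duhamel-plus-integration-by-parts reformulation that trades the source $f=\partial_t f_0(\psi)$ for $f_0(\psi)$ convolved with $\partial_t^{\ell+1}K_2$ (the paper's \eqref{eq:29}--\eqref{eq:30}, with the kernel bounds of Corollary \ref{coro_Estimate_Time_Derivative}), the same $[0,t/2]$/$[t/2,t]$ splitting with $(L^1\cap L^2)$--$L^2$ versus $L^2$--$L^2$ estimates, the same Gagliardo--Nirenberg/fractional Leibniz/Sobolev treatment of $f_0$ under $s>(n/2-1)^+$, and the same bilinear contraction argument. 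The only deviation is that you keep the fully integrated form on all of $[0,t]$, whereas the paper reverts to the original source $f$ on $[t/2,t]$ via the mixed decomposition \eqref{eq:33} for the borderline case $n=1$, $\ell=2$; your version still closes (the exponents match exactly at the borderline), so this is a stylistic rather than substantive difference.
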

\begin{remark}
	The derived estimates of solutions to the JMGT equation coincide with those in Propositions \ref{Prop_Estimate_Solution_itself} and \ref{Prop_Estimate_Time_Derivative} for the MGT equation. That is to say that we observe the phenomenon of no loss of decay to the corresponding linear problem \eqref{Eq_MGT}.  
\end{remark}

\begin{remark}\label{Rem_Other_Model}
	Our approach of the proof for Theorem \ref{Thm_GESDS} also can be applied to demonstrate global (in time) existence of Sobolev solutions for
	\begin{itemize}
		\item the Cauchy problem for the following Westervelt's type JMGT equation:
		\begin{align*}
			\tau\psi_{ttt}+\psi_{tt}-\Delta\psi-(\delta+\tau)\Delta\psi_t=\partial_t\left(\left(1+\frac{B}{2A}\right)(\psi_t)^2\right),
		\end{align*}
		where we neglect local nonlinear effects so that $|\nabla\psi|^2\approx(\psi_t)^2$ in the modeling;
		\item the Cauchy problem for the following acoustic pressure equation:
		\begin{align*}
			\tau p_{ttt}+p_{tt}-\Delta p-(\delta+\tau)\Delta p_t=\partial_{tt}\left(\frac{1}{2\rho_0}\left(1+\frac{B}{2A}\right)p^2\right),
		\end{align*}
		with the medium density (constant) $\rho_0>0$, where $p=p(t,x)\in\mb{R}$ denotes the acoustic pressure;
	\end{itemize}
	without additional difficulty (the absence of the gradient nonlinearity makes the problem easier), where the control of nonlinear terms can follow the one in next parts.
\end{remark}

To end this discussion, let us propose three remarks for comparing the previous result in \cite{Racke-Said-2020}.
\begin{remark}
	In the view of global (in time) solution for the JMGT equation \eqref{JMGT_Dissipative} in the three-dimensional case, \cite[Theorem 1.1]{Racke-Said-2020} required $(\psi_0,\psi_1,\psi_2)\in H^{s+2}\times H^{s+2}\times H^{s+1}$ carrying $s>5/2$. Differently from their result, in Theorem \ref{Thm_GESDS} for $n=3$, we assume $(\psi_0,\psi_1,\psi_2)\in H^{s+2}\times H^{s+1}\times H^{s}$ with additional $L^1$ regularity carrying $s>1/2$. In other words, we require less regularity for $\psi_1$ and $\psi_2$. The assumption for additional $L^1$ regularity is beneficial to the demand of higher-order $H^s$ regularity.  
\end{remark}
\begin{remark}
	Concerning estimates for the global (in time) solution and its derivatives for the JMGT equation \eqref{JMGT_Dissipative} in the three-dimensional case,  \cite[Theorem 1.3]{Racke-Said-2020} assumed $H^s\cap L^1$ regularity for initial data and weighted $L^1$	regularity carrying $\int_{\mb{R}^3}\psi_{tt}(0,x)\mathrm{d}x=0=\int_{\mb{R}^3}\psi_{ttt}(0,x)\mathrm{d}x$. Differently from their result, in Theorem \ref{Thm_GESDS} for $n=3$, we derived estimates for each derivative of solution separately (even for the solution itself) instead of a total energy without any special assumption for initial data.
\end{remark}
\begin{remark}
We proved existence of global (in time) solution for the JMGT equation \eqref{JMGT_Dissipative} for any $n\geqslant 1$, even for the physical dimensions $n=1,2$ that did not be consider in \cite{Racke-Said-2020}.
\end{remark}
\subsection{Philosophy of the proof for global (in time) existence of unique solution}
 For the sake of  clarity,
we denote a time-dependent function by
\begin{align*}
		\widetilde{\ml{D}}_{n,s}(t):=\begin{cases}
			(1+t)^{-\frac{s}{2}+\frac{1}{4}}&\mbox{if} \ \ n=1,\\
			(1+t)^{-\frac{s}{2}-\frac{1}{2}} \ln (\mathrm{e}+t)&\mbox{if}\ \ n=2,\\
			(1+t)^{-\frac{s}{2}-\frac{n}{4}}&\mbox{if} \ \ n\geqslant 3,
		\end{cases}
\end{align*}
which is useful to describe the decay properties of the nonlinear term.
Let us construct a sort of time-weighted evolution spaces for any $T>0$ such that
\begin{align*}
	X_s(T):=\ml{C}([0,T],H^{s+2})\cap\ml{C}^1([0,T],H^{s+1})\cap \ml{C}^2([0,T],H^s),
\end{align*}
carrying the corresponding norms
\begin{align*}
	\|\psi\|_{X_s(T)}:=\sup\limits_{t\in[0,T]}&\left(\big(\ml{D}_n(t)\big)^{-1}\|\psi(t,\cdot)\|_{L^2}+(1+t)^{\frac{n}{4}}\|\,|D| \psi(t,\cdot)\|_{L^2}+\sum\limits_{\ell=1,2}(1+t)^{\frac{\ell-1}{2}+\frac{n}{4}}\|\partial_t^{\ell}\psi(t,\cdot)\|_{L^2} \right.\\
	&\quad\left.+\sum\limits_{\ell=0,1,2}(1+t)^{\frac{1}{2}+\frac{s}{2}+\frac{n}{4}}\|\,|D|^{s+2-\ell}\partial_t^{\ell}\psi(t,\cdot)\|_{L^2}\right)
\end{align*}
for $n=1,2$, and 
\begin{align*}
	\|\psi\|_{X_s(T)}:=\sup\limits_{t\in[0,T]}\left(\sum\limits_{\ell=0,1,2}(1+t)^{\frac{\ell-1}{2}+\frac{n}{4}}\|\partial_t^{\ell}\psi(t,\cdot)\|_{L^2}+\sum\limits_{\ell=0,1,2}(1+t)^{\frac{1}{2}+\frac{s}{2}+\frac{n}{4}}\|\,|D|^{s+2-\ell}\partial_t^{\ell}\psi(t,\cdot)\|_{L^2}\right)
\end{align*}
for $n\geqslant 3$, 
where $s>(n/2-1)^+$ for all $n\geqslant 1$.
\begin{remark}
In the case when $n\geqslant 3$, we may get the bounded estimates for $(1+t)^{\frac{n}{4}}\|\,|D|\psi(t,\cdot)\|_{L^2}$ from the definition of $X_s(T)$ by using some interpolations as follows:
\begin{align*}
	(1+t)^{\frac{n}{4}}\|\psi(t,\cdot)\|_{\dot{H}^1}\lesssim\left((1+t)^{-\frac{1}{2}+\frac{n}{4}}\|\psi(t,\cdot)\|_{L^2}\right)^{\frac{s+1}{s+2}}\left((1+t)^{\frac{1}{2}+\frac{s}{2}+\frac{n}{4}}\|\psi(t,\cdot)\|_{\dot{H}^{s+2}}\right)^{\frac{1}{s+2}}\lesssim\|\psi\|_{X_s(t)},
\end{align*}
 whereas we need to construct weighted $\dot{H}^1$ norm in $X_s(T)$ when $n=1,2$ as a supplement.
\end{remark}
We may introduce the operator $N$ by
\begin{align}\label{*N4}
	N:\ \psi(t,x)\in X_s(T)\to N\psi(t,x):=\psi^{\lin} (t,x)+\psi^{\non}(t,x),
\end{align}
where $\psi^{\lin} =\psi^{\lin} (t,x)$ is the solution to the linearized Cauchy problem \eqref{Eq_MGT}, and the integral operator is denoted by
\begin{align}\label{*N1}
	\psi^{\non}(t,x):=\int_0^tK_2(t-\sigma,x)\ast f\big(\psi(\sigma,x)\big)\mathrm{d}\sigma,
\end{align}
where $K_2(t,x)$ is the kernel for the third data of the linearized problem \eqref{Eq_MGT}, which is motivated by Duhamel's principle. Here, we denote by $f(\psi)$ the nonlinear term on the right-hand side of \eqref{JMGT_Dissipative}$_1$, precisely,
\begin{align*}
	f(\psi)  :=\partial_t\left(\frac{B}{2A}(\psi_t)^2+|\nabla\psi|^2\right)=\frac{B}{A}\psi_t\,\psi_{tt}+2\nabla\psi\cdot\nabla\psi_t, 
\end{align*}
and the quadratic nonlinear term is
\begin{align*}
	f_{0}(\psi):=\frac{B}{2A}(\psi_t)^2+|\nabla\psi|^2\ \ \mbox{so that}\ \ f(\psi)=\partial_tf_0(\psi).
\end{align*}
Indeed, they hold
\begin{align*}
\widehat{K}_{2}(0,|\xi|)=\partial_{t} \widehat{K}_{2}(0,|\xi|)=0,\ \ \partial_{t}^{2} \widehat{K}_{2}(0,|\xi|)=1
\end{align*}
 moreover,
\begin{align} \label{eq:28}
	\partial_{t}^{\ell} \psi^{\non}(t,x)= \int_0^t \partial_{t}^{\ell} K_2(t-\sigma,x)\ast f\big(\psi(\sigma,x)\big)\mathrm{d}\sigma
\end{align}
for $\ell=0,1,2$. Using these notations and integration by parts in regard to $t$,  we may arrive at 
\begin{align}\label{eq:29} 
	\partial_{t}^{\ell} \psi^{\non}(t,x)= -(\partial_{t}^{\ell} K_2)(t,x)\ast f_{0}\big(\psi(0,x)\big)-\int_0^t \partial_{t}^{\ell+1} K_2(t-\sigma,x)\ast f_{0}\big(\psi(\sigma,x)\big)\mathrm{d}\sigma.
\end{align}
for $\ell=0,1$, and
\begin{align}
	\partial_{t}^{2} \psi^{\non}(t,x)=f_{0}\big(\psi(t,x)\big) -(\partial_{t}^{2} K_2)(t,x)\ast f_{0}\big(\psi(0,x)\big)-\int_0^t \partial_{t}^{3} K_2(t-\sigma,x)\ast f_{0}\big(\psi(\sigma,x)\big)\mathrm{d}\sigma. \label{eq:30}
\end{align}
Occasionally, the following decomposition is useful to obtain sharp decay properties in some norms:
\begin{align}\label{eq:33}
		 \partial_{t}^{\ell} \psi^{\non}(t,x)
	& = (\partial_{t}^{\ell} K_2)(\tfrac{t}{2},x)\ast f_{0}\big(\psi(\tfrac{t}{2},x)\big) -(\partial_{t}^{\ell} K_2)(t,x)\ast f_{0}\big(\psi(0,x)\big)\notag\\
	&\quad -\int_0^\frac{t}{2} \partial_{t}^{\ell+1} K_2(t-\sigma,x)\ast f_{0}\big(\psi(\sigma,x)\big)\mathrm{d}\sigma +\int_\frac{t}{2}^{t} \partial_{t}^{\ell} K_2(t-\sigma,x)\ast f\big(\psi(\sigma,x)\big)\mathrm{d}\sigma,
\end{align}
for $\ell=0,1,2$,
which was derived by \eqref{eq:28} and the use of integration by parts in the interval $[0,t/2]$. We should underline that the nonlinear term is $f_0(\psi)$ in the integral on $[0,t/2]$, while the nonlinear term is $f(\psi)=\partial_tf_0(\psi)$ in another integral on $[t/2,t]$. That is one of the strategic part to overcome some technical difficulties in lower dimensional cases.

In the next parts, we will demonstrate global (in time) existence and uniqueness of small data Sobolev solutions to the JMGT equation \eqref{JMGT_Dissipative} via proving a fixed point of operator $N$ which means $N\psi\in X_s(T)$ for any $T>0$. Namely, the following two crucial inequalities:
\begin{align}
	\|N\psi\|_{X_s(T)}&\lesssim\|(\psi_0,\psi_1,\psi_2)\|_{\ml{A}_{s} }+\|(\psi_0,\psi_1,\psi_2)\|_{\ml{A}_{s} }^2+\|\psi\|_{X_s(T)}^2,\label{Cruc-01}\\
	\|N\psi-N\bar{\psi}\|_{X_s(T)}&\lesssim\|\psi-\bar{\psi}\|_{X_s(T)}\left(\|\psi\|_{X_s(T)}+\|\bar{\psi}\|_{X_s(T)}\right),\label{Cruc-02}
\end{align}
will be proved, respectively, for any $T>0$ with the data space $\ml{A}_s$ defined in Subsection \ref{subsection:Notation}. In our second inequality \eqref{Cruc-02}, $\psi$ and $\bar{\psi}$ are two solutions to the viscous JMGT equation \eqref{JMGT_Dissipative}. Providing that we take $\|(\psi_0,\psi_1,\psi_2)\|_{\ml{A}_{s} }=\epsilon$ as a sufficiently small and positive constant, then we combine \eqref{Cruc-01} with \eqref{Cruc-02} to claim that there exists global (in time) small data Sobolev solution $\psi^{*}=\psi^{*}(t,x)\in X_s(T)$ by using Banach's fixed point theorem.

With the aim of  demonstrating the crucial inequalities \eqref{Cruc-01} and \eqref{Cruc-02}, we will divide our proof into two steps (two subsections, respectively) as follows:
\begin{description}
	\item[Step A.]  Under the construction of evolution spaces $X_s(T)$, we control the nonlinear terms in $L^m$ and $\dot{H}^s$ norms with $m=1,2$ and suitable $s>(n/2-1)^+$ by some interpolations and embedding theorems.
	\item[Step B.] With the aid of the derived estimates in Step A, we rigorously prove our desired estimates and complete the proof by suitable representations \eqref{eq:29}-\eqref{eq:33}.
\end{description}

\subsection{Step A. Some estimates for nonlinear terms}\label{Subsec_Est_Nonlinear_Term}
Our purpose in this part is to give some estimates to the nonlinear terms in some norms as follows: $f_0(\psi)$ in $L^1$, $L^2$, $\dot{H}^1$, $\dot{H}^s$ as well as $\dot{H}^{s+1}$ with $n\geqslant 1$; and $f(\psi)$ in $\dot{H}^{s}$  with $n=1$  for any $s>(n/2-1)^+$
which will be controlled by $\|\psi\|_{X_s(T)}^2$ with suitable time-dependent coefficients for any $\sigma\in[0,T]$.
\medskip

\noindent\textbf{Substep A1.} Estimates of $f_{0}(\psi)$ in some norms for $n \geqslant 1$.\\
The estimate for the $L^{1}$ norm is clearly obtained by
\begin{align} \label{Est_02}
	\big\|f_{0}\big(\psi(\sigma,\cdot)\big)\big\|_{L^1}&\lesssim\|\psi_t(\sigma,\cdot)\|_{L^2}^{2}+\|\nabla\psi(\sigma,\cdot)\|_{L^2}^2 \lesssim (1+\sigma)^{-\frac{n}{2}}\|\psi\|_{X_s(\sigma)}^2.
\end{align}
Next, we show the estimate for the $L^{2}$ norm. Applying H\"older's inequality and the Sobolev inequality \eqref{eq:C1} equipping $s+1>n/2$, we arrive at
\begin{align}\label{Est_03}
	\big\|f_{0}\big(\psi(\sigma,\cdot)\big)\big\|_{L^2}&\lesssim\|\psi_t(\sigma,\cdot) \|_{L^{\infty}} \| \psi_{t}(\sigma,\cdot)\|_{L^2}+\|\nabla\psi(\sigma,\cdot) \|_{L^{\infty}} \|\nabla\psi(\sigma,\cdot)\|_{L^2}\notag\\
&\lesssim\|\psi_t(\sigma,\cdot) \|_{L^{2}}^{2-\frac{n}{2(s+1)}} \| \psi_{t}(\sigma,\cdot)\|_{\dot{H}^{s+1}}^{\frac{n}{2(s+1)}}+\|\nabla \psi(\sigma,\cdot) \|_{L^{2}}^{2-\frac{n}{2(s+1)}} \| \nabla \psi (\sigma,\cdot)\|_{\dot{H}^{s+1}}^{\frac{n}{2(s+1)}}\notag \\
& \lesssim (1+\sigma)^{-\frac{3n}{4}}\|\psi\|_{X_s(\sigma)}^2.
\end{align}
For the estimate in the $\dot{H}^{s+1}$ norm, we apply the fractional Leibniz rule (see Proposition \ref{fractionleibnizrule}) and the Sobolev inequality \eqref{eq:C1} to have 
\begin{align} \label{eq:3.1}
	\big\|f_{0}\big(\psi(\sigma,\cdot)\big)\big\|_{\dot{H}^{s+1}}&\lesssim\|\psi_t(\sigma,\cdot) \|_{L^{\infty}} \|\, |D|^{s+1} \psi_{t}(\sigma,\cdot)\|_{L^2}+\|\nabla\psi(\sigma,\cdot) \|_{L^{\infty}} \|\,|D|^{s+1}\nabla \psi(\sigma,\cdot)\|_{L^2}\notag\\
	& \lesssim (1+\sigma)^{-\frac{3n}{4}-\frac{s+1}{2}}\|\psi\|_{X_s(\sigma)}^2,
\end{align}
in which the definition of the evolution space $X_s(T)$ was considered. Once we have \eqref{Est_03} and \eqref{eq:3.1}, they ensure
\begin{align}
\big\| f_{0}\big(\psi(\sigma,\cdot)\big)\big\|_{\dot{H}^{1}}& \lesssim (1+\sigma)^{-\frac{3n}{4}-\frac{1}{2}}\|\psi\|_{X_s(\sigma)}^2,  \label{eq:35} \\
\big\| f_{0}\big(\psi(\sigma,\cdot)\big)\big\|_{\dot{H}^{s}}& \lesssim (1+\sigma)^{-\frac{3n}{4}-\frac{s}{2}}\|\psi\|_{X_s(\sigma)}^2,\notag
\end{align}
by suitable interpolations between $L^2$ and $\dot{H}^{s+1}$.

\medskip
\noindent\textbf{Substep A2.} Estimate of $f(\psi)$ in the $\dot{H}^s$ norms for $n=1$.\\
Being differ from Substep A1, we employ the fractional Leibniz rule (see Proposition \ref{fractionleibnizrule}) to deduce
\begin{align*}
	\big\|f\big(\psi(\sigma,\cdot)\big)\big\|_{\dot{H}^s}&\lesssim\|\psi_t(\sigma,\cdot)\psi_{tt}(\sigma,\cdot)\|_{\dot{H}^s}+\|\nabla\psi(\sigma,\cdot)\cdot\nabla\psi_t(\sigma,\cdot)\|_{\dot{H}^s}\\
	&\lesssim \|\,|D|^s\psi_t(\sigma,\cdot)\|_{L^{\infty}}\|\psi_{tt}(\sigma,\cdot)\|_{L^{2}}+\|\psi_t(\sigma,\cdot)\|_{L^{\infty}}\|\psi_{tt}(\sigma,\cdot)\|_{\dot{H}^s}\\
	&\quad+\|\,|D|^{s+1}\psi(\sigma,\cdot)\|_{L^{\infty}}\|\,|D|\psi_t(\sigma,\cdot)\|_{L^{2}}+\|\,|D|\psi(\sigma,\cdot)\|_{L^{\infty}}\|\,|D|\psi_t(\sigma,\cdot)\|_{\dot{H}^s}\\
	&=:I_1(\sigma)+I_2(\sigma)+I_3(\sigma)+I_4(\sigma).
\end{align*}
By applying the Sobolev inequality \eqref{eq:C2}  associated with the Gagliardo-Nirenberg inequality in Proposition \ref{fractionalgagliardonirenbergineq}, one notices
\begin{align*}
I_1(\sigma)&\lesssim\|\psi_t(\sigma,\cdot)\|_{\dot{H}^s}^{\frac{1}{2}} \|\psi_t(\sigma,\cdot)\|_{\dot{H}^{s+1}}^{\frac{1}{2}} \|\psi_{tt}(\sigma,\cdot)\|_{L^2}\\
&\lesssim\|\psi_t(\sigma,\cdot)\|_{L^2}^{\frac{1}{2(s+1)}}\|\psi_t(\sigma,\cdot)\|_{\dot{H}^{s+1}}^{\frac{2s+1}{2(s+1)}}\|\psi_{tt}(\sigma,\cdot)\|_{L^2}\\
&\lesssim (1+\sigma)^{-\frac{5}{4}-\frac{s}{2}}\|\psi\|_{X_s(T)}^2.
\end{align*} 
Repeating the same way, we conclude
\begin{align*}
	I_2(\sigma)&\lesssim \|\psi_t(\sigma,\cdot)\|_{L^{2}}^{\frac{1}{2}} \|\psi_t(\sigma,\cdot)\|_{\dot{H}^{1}}^{\frac{1}{2}} \|\psi_{tt}(\sigma,\cdot)\|_{\dot{H}^s}\\
	&\lesssim\|\psi_t(\sigma,\cdot)\|_{L^{2}}^{\frac{2s+1}{2(s+1)}}\|\psi_t(\sigma,\cdot)\|_{\dot{H}^{s+1}}^{\frac{1}{2(s+1)}}\|\psi_{tt}(\sigma,\cdot)\|_{\dot{H}^s},
\end{align*}
and
\begin{align*}
I_3(\sigma)&\lesssim\|\psi(\sigma,\cdot)\|_{\dot{H}^{s+1}}^{\frac{1}{2}}\|\psi(\sigma,\cdot)\|_{\dot{H}^{s+2}}^{\frac{1}{2}}\|\psi_t(\sigma,\cdot)\|_{\dot{H}^{1}}\\
&\lesssim\|\psi(\sigma,\cdot)\|_{\dot{H}^1}^{\frac{1}{2(s+1)}}\|\psi(\sigma,\cdot)\|_{\dot{H}^{s+2}}^{\frac{2s+1}{2(s+1)}}\|\psi_t(\sigma,\cdot)\|_{L^2}^{\frac{s}{s+1}}\|\psi_t(\sigma,\cdot)\|_{\dot{H}^{s+1}}^{\frac{1}{s+1}},
\end{align*}
where we are benefit from the $\|\,|D|\psi(t,\cdot)\|_{L^2}$ norm in the evolution space, as well as
\begin{align*}
	I_4(\sigma)&\lesssim \|\psi(\sigma,\cdot)\|_{\dot{H}^{1}}^{\frac{1}{2}} \|\psi(\sigma,\cdot)\|_{\dot{H}^{2}}^{\frac{1}{2}}\|  \psi_t(\sigma,\cdot)\|_{\dot{H}^{s+1}}\\
	&\lesssim\|\psi(\sigma,\cdot)\|_{\dot{H}^1}^{\frac{2s+1}{2(s+1)}}\|\psi(\sigma,\cdot)\|_{\dot{H}^{s+2}}^{
	\frac{1}{2(s+1)}}\|  \psi_t(\sigma,\cdot)\|_{\dot{H}^{s+1}}.
\end{align*}
As a consequence, we may find
\begin{align}
	\big\|f\big(\psi(\sigma,\cdot)\big)\big\|_{\dot{H}^s} \lesssim I_1(\sigma)+I_2(\sigma) +I_3(\sigma)+I_4(\sigma) \lesssim (1+\sigma)^{-\frac{5}{4}-\frac{s}{2}}\|\psi\|_{X_s(T)}^2 \label{eq:3.3}
\end{align}
for $n=1$, since $\|\cdot\|_{X_s(\sigma)}\leqslant \|\cdot\|_{X_s(T)}$ for any $0\leqslant \sigma\leqslant T$.

\subsection{Step B. Proof of desired estimates in the evolution space} \label{subs:3.4}
First of all, according to the derived estimates in Propositions 2.1 as well as 2.2, we claim that
\begin{align*}
	\|\psi^{\lin} \|_{X_s(T)}\lesssim\|(\psi_0,\psi_1,\psi_2)\|_{\ml{A}_s}.
\end{align*}
In other words, concerning the aim \eqref{Cruc-01}, we just need to estimate $\|\psi^{\non}\|_{X_s(T)}$ in the remaindering of this subsection.
We firstly show the estimates for $\partial_{t}^{\ell} \psi^{\non}(t,\cdot)$ carrying $\ell=0,1$.
By using the expression \eqref{eq:29}, and then dividing the interval into $[0,t/2]$ as well as $[t/2,t]$, we may see
\begin{align}\label{*N2}
	 \|\partial_{t}^{\ell} \psi^{\non}(t,\cdot)\|_{L^2} & \lesssim \ml{D}_{n,\ell}(t) \|(\psi_0,\psi_1,\psi_2)\|_{\ml{A}_s}^2+\int_0^{\frac{t}{2}} (1+t-\sigma)^{-\frac{\ell}{2}-\frac{n}{4}} \big\|f_{0} \big(\psi(\sigma,\cdot)\big)\big\|_{L^2\cap L^1}\mathrm{d}\sigma  \notag\\
	& \quad
	+\int_{\frac{t}{2}}^{t} (1+t-\sigma)^{-\frac{\ell}{2}}\big\|f_{0}\big(\psi(\sigma,\cdot)\big)\big\|_{L^2}\mathrm{d}\sigma \notag\\
	&\lesssim \ml{D}_{n,\ell}(t) \|(\psi_0,\psi_1,\psi_2)\|_{\ml{A}_s}^2+  (1+t)^{-\frac{\ell}{2}-\frac{n}{4}} \int_{0}^{\frac{t}{2}} (1+\sigma)^{-\frac{n}{2}} \mathrm{d}\sigma\,\|\psi\|_{X_s(T)}^2 \notag\\
     & \quad
	+  (1+t)^{-\frac{3n}{4}} \int_{\frac{t}{2}}^{t} (1+t-\sigma)^{-\frac{\ell}{2}} \mathrm{d}\sigma\,\|\psi\|_{X_s(T)}^2 \notag\\
	&\lesssim \ml{D}_{n,\ell}(t) \|(\psi_0,\psi_1,\psi_2)\|_{\ml{A}_s}^2 + \widetilde{\ml{D}}_{n,\ell}(t) \|\psi\|_{X_s(T)}^2
\end{align}
by derived $(L^2\cap L^1)-L^2$ estimates \eqref{Est_04} for $[0,t/2]$ and $L^2-L^2$ estimates \eqref{Est_05} for $[t/2,t]$.  
In the above, we used the fact that
\begin{align*}
	\big\|(\partial_t^{\ell}K_2)(t,\cdot)\ast f_0\big(\psi(0,\cdot)\big)\big\|_{L^2}
	\lesssim\ml{D}_{n,\ell}(t)\left(\|\psi_1\|_{L^4\cap L^2}^2+\|\nabla\psi_0\|_{L^4\cap L^2}^2\right)\lesssim\ml{D}_{n,\ell}(t)\|(\psi_0,\psi_1,\psi_2)\|_{\ml{A}_s}^2
\end{align*}
for $\ell=0,1$.  Similarly, in the case  $\ell=2$ with $n\geqslant 2$, we have 
\begin{align*}
	 \|\partial_{t}^{2} \psi^{\non}(t,\cdot)\|_{L^2}&\lesssim \big\|f_0\big(\psi(t,\cdot)\big)\big\|_{L^2}+(1+t)^{-\frac{1}{2}-\frac{n}{4}}\big\|f_0\big(\psi(0,\cdot)\big)\big\|_{L^2\cap L^1}\\
	 &\quad+\int_0^{\frac{t}{2}}(1+t-\sigma)^{-1-\frac{n}{4}}\big\|f_0\big(\psi(\sigma,\cdot)\big)\big\|_{\dot{H}^1\cap L^1}\mathrm{d}\sigma+\int_{\frac{t}{2}}^t(1+t-\sigma)^{-\frac{1}{2}}\big\|f_0\big(\psi(\sigma,\cdot)\big)\big\|_{\dot{H}^1}\mathrm{d}\sigma\\
	 & \lesssim \ml{D}_{n,2}(t) \|(\psi_0,\psi_1,\psi_2)\|_{\ml{A}_s}^2 + \widetilde{\ml{D}}_{n,2}(t) \|\psi\|_{X_s(T)}^2.
\end{align*} 
 For the remainder case $\ell=2$ with $n=1$, 
noting that 
\begin{align*}
\big\| (\partial_{t}^{2} K_2)(\tfrac{t}{2},\cdot)\ast f_{0}\big(\psi(\tfrac{t}{2},\cdot)\big) \big\|_{L^{2}} 
&\lesssim (1+t)^{-\frac{3}{4}} \big\| f_{0}\big(\psi(\tfrac{t}{2},\cdot)\big)  \big\|_{L^2\cap L^{1}}\\
&\lesssim (1+t)^{-\frac{5}{4}} \|\psi\|_{X_s(\sigma)}^2
\end{align*}
by employing \eqref{Est_04}, \eqref{Est_02} and \eqref{Est_03}, 
we may apply the derived estimates \eqref{eq:2.3} with $s=0$, $\ell=2$ for $[0,t/2]$ and \eqref{Est_05} to \eqref{eq:33} to see that 
\begin{align*}
 \| \partial_{t}^{2} \psi^{\non}(t,\cdot) \|_{L^{2}}
& \lesssim \ml{D}_{1,2}(t) \|(\psi_0,\psi_1,\psi_2)\|_{\ml{A}_s}^2 + \widetilde{\ml{D}}_{1,2}(t) \|\psi\|_{X_s(T)}^2 \\
& \quad+\int_0^\frac{t}{2} (1+t-\sigma)^{-\frac{5}{4}} \big\| f_{0}\big(\psi(\sigma,\cdot)\big) \big\|_{\dot{H}^{1} \cap L^{1}}\mathrm{d}\sigma
+\int_\frac{t}{2}^{t}  (1+t-\sigma)^{-\frac{1}{2}}  \big\| f\big(\psi(\sigma,\cdot)\big) \big\|_{L^2} \mathrm{d}\sigma \\
& \lesssim \ml{D}_{1,2}(t) \|(\psi_0,\psi_1,\psi_2)\|_{\ml{A}_s}^2 + \widetilde{\ml{D}}_{1,2}(t) \|\psi\|_{X_s(T)}^2+ (1+t)^{-\frac{5}{4}} \int_0^\frac{t}{2} (1+\sigma)^{-\frac{1}{2}} \mathrm{d}\sigma\,\|\psi\|_{X_s(T)}^2 \\
& \quad +  (1+t)^{-\frac{5}{4}} \int_\frac{t}{2}^{t}  (1+t-\sigma)^{-\frac{1}{2}}   \mathrm{d}\sigma\,\|\psi\|_{X_s(T)}^2 \\
& \lesssim \ml{D}_{1,2}(t) \|(\psi_0,\psi_1,\psi_2)\|_{\ml{A}_s}^2 + \widetilde{\ml{D}}_{1,2}(t) \|\psi\|_{X_s(T)}^2
\end{align*}
by concerning estimates \eqref{Est_02}, \eqref{eq:35} and \eqref{eq:3.3} with $s=0$.  Consequently,
\begin{align*}
	\big(\ml{D}_{n,\ell}(t)\big)^{-1}\| \partial_{t}^{\ell} \psi^{\non}(t,\cdot)\|_{L^{2}}\lesssim\|(\psi_0,\psi_1,\psi_2)\|_{\ml{A}_s}^2 + \|\psi\|_{X_s(T)}^2 
\end{align*}
for any $\ell=0,1,2$ because of $\widetilde{\ml{D}}_{n,\ell}(t)\lesssim\ml{D}_{n,\ell}(t)$.
%

Now, we derive the estimate for $|D| \psi^{\non}(t,\cdot)$ in the $L^2$ norm for $n=1,2$.
We apply estimates \eqref{eq:16} and \eqref{Est_09} equipping $s=0$, $\ell=1$ to \eqref{eq:29} to obtain 
\begin{align*}
	\|\,|D| \psi^{\non}(t,\cdot)\|_{L^2} 
	&\lesssim \ml{D}_{n,1}(t) \|(\psi_0,\psi_1,\psi_2)\|_{\ml{A}_s}^2+  (1+t)^{-\frac{1}{2}-\frac{n}{4}} \int_{0}^{\frac{t}{2}} (1+\sigma)^{-\frac{n}{2}} \mathrm{d}\sigma\, \|\psi\|_{X_s(T)}^2 \\
& \quad
+  (1+t)^{-\frac{3n}{4}} \int_{\frac{t}{2}}^{t}(1+t-\sigma)^{-\frac{1}{2}} \mathrm{d}\sigma\,\|\psi\|_{X_s(T)}^2 \\
	&\lesssim \ml{D}_{n,1}(t) \|(\psi_0,\psi_1,\psi_2)\|_{\ml{A}_s}  + \widetilde{\ml{D}}_{n,1}(t)  \|\psi\|_{X_s(T)}^2. 
\end{align*}

Finally, concerning higher-order derivative of solution, by the formula \eqref{eq:29}, 
we will use \eqref{eq:2.3}, \eqref{Est_02} and \eqref{eq:3.1} in the sub-interval $[0,t/2]$ and \eqref{eq:2.4}, \eqref{eq:3.1} in the sub-interval $[t/2,t]$.
Precisely, since 
\begin{align*} 
\big\||D|^{s+2-\ell}(\partial_t^{\ell}K_2)(t,\cdot)\ast f_0\big(\psi(0,\cdot)\big)\big\|_{L^2}&\lesssim (1+t)^{-\frac{1}{2}-\frac{s}{2}-\frac{n}{4}}\left(\|\,|\psi_1|^2\|_{\dot{H}^s\cap L^1}+\|\,|\nabla\psi_0|^2\|_{\dot{H}^s\cap L^1}\right)\\
&\lesssim (1+t)^{-\frac{1}{2}-\frac{s}{2}-\frac{n}{4}}\|(\psi_0,\psi_1,\psi_2)\|_{\ml{A}_s}^2,
\end{align*}
 when $\ell=0,1$, one may notice
\begin{align}\label{*N3}
	 \|\,|D|^{s+2-\ell}\partial_t^{\ell}\psi^{\non}(t,\cdot)\|_{L^2}&\lesssim \ml{D}_{n,s+2}(t) \|(\psi_0,\psi_1,\psi_2)\|_{\ml{A}_s}^2+\int_0^{\frac{t}{2}} (1+t-\sigma)^{-1-\frac{s}{2}-\frac{n}{4}} \big\|f_{0} \big(\psi(\sigma,\cdot)\big)\big\|_{\dot{H}^{s+1}\cap L^1}\mathrm{d}\sigma   \notag\\
	& 
	\quad+\int_{\frac{t}{2}}^{t}(1+t-\sigma)^{-\frac{1}{2}} \big\|f_{0}\big(\psi(\sigma,\cdot)\big)\big\|_{\dot{H}^{s+1}}\mathrm{d}\sigma \notag \\
	&\lesssim \ml{D}_{n,s+2}(t) \|(\psi_0,\psi_1,\psi_2)\|_{\ml{A}_s}^2 +  (1+t)^{-1-\frac{s}{2}-\frac{n}{4}} \int_{0}^{\frac{t}{2}} (1+\sigma)^{-\frac{n}{2}} \mathrm{d}\sigma\,\|\psi\|_{X_s(T)}^2\notag\\
& 
	\quad+  (1+t)^{-\frac{3n}{4}-\frac{s+1}{2}} \int_{\frac{t}{2}}^{t} (1+t-\sigma)^{-\frac{1}{2}} \mathrm{d}\sigma\,\|\psi\|_{X_s(T)}^2 \notag\\
	&\lesssim \ml{D}_{n,s+2}(t) \|(\psi_0,\psi_1,\psi_2)\|_{\ml{A}_s}^2  + \widetilde{\ml{D}}_{n,s+2}(t)  \|\psi\|_{X_s(T)}^2. 
\end{align} 
In the case $\ell=2$ with $n\geqslant 2$, we apply the same argument to the last one, which implies 
\begin{align*}
	 \|\,|D|^{s}\partial_t^{2}\psi^{\non}(t,\cdot)\|_{L^2}&\lesssim \ml{D}_{n,s+2}(t) \|(\psi_0,\psi_1,\psi_2)\|_{\ml{A}_s}^2+(1+t)^{-\frac{3n}{4}-\frac{s}{2}}\|\psi\|_{X_s(T)}^2  \\
	&\quad +\int_0^{\frac{t}{2}} (1+t-\sigma)^{-1-\frac{s}{2}-\frac{n}{4}} \big\|f_{0} \big(\psi(\sigma,\cdot)\big)\big\|_{\dot{H}^{s+1}\cap L^1}\mathrm{d}\sigma\\ 
	&\quad +\int_{\frac{t}{2}}^{t}(1+t-\sigma)^{-\frac{1}{2}} \big\|f_{0}\big(\psi(\sigma,\cdot)\big)\big\|_{\dot{H}^{s+1}}\mathrm{d}\sigma \\
	&\lesssim \ml{D}_{n,s+2}(t) \|(\psi_0,\psi_1,\psi_2)\|_{\ml{A}_s}^2  + \widetilde{\ml{D}}_{n,s+2}(t)  \|\psi\|_{X_s(T)}^2.
\end{align*}
 On the other hand, 
when $\ell=2$ with $n=1$, we consider the representation \eqref{eq:33} and obtained estimates \eqref{eq:2.3}, \eqref{Est_02} and \eqref{eq:3.1} for $[0,t/2]$ and \eqref{Est_05}, \eqref{eq:3.3} for $[t/2, t]$ to deduce
\begin{align*}
 &\|\,|D|^{s} \partial_{t}^{2} \psi^{\non}(t,\cdot) \|_{L^{2}}\\
  &\qquad \lesssim \ml{D}_{1,s+2}(t) \|(\psi_0,\psi_1,\psi_2)\|_{\ml{A}_s}^2 + \widetilde{\ml{D}}_{1,s+2}(t) \|\psi\|_{X_s(T)}^2 \\
&\qquad\quad +\int_0^\frac{t}{2} (1+t-\sigma)^{-\frac{s}{2}-\frac{5}{4}} \big\| f_{0}\big(\psi(\sigma,\cdot)\big) \big\|_{\dot{H}^{s+1} \cap L^{1}}\mathrm{d}\sigma
+\int_\frac{t}{2}^{t}  (1+t-\sigma)^{-\frac{1}{2}}  \big\| f\big(\psi(\sigma,\cdot)\big) \big\|_{\dot{H}^{s}} \mathrm{d}\sigma \\
&\qquad \lesssim \ml{D}_{1,s+2}(t) \|(\psi_0,\psi_1,\psi_2)\|_{\ml{A}_s}^2 + \widetilde{\ml{D}}_{1,s+2}(t) \|\psi\|_{X_s(T)}^2 \\
&\qquad\quad +\|\psi\|_{X_s(T)}^2 (1+t)^{-\frac{s}{2}-\frac{5}{4}} \int_0^\frac{t}{2} (1+\sigma)^{-\frac{1}{2}} \mathrm{d}\sigma
+\|\psi\|_{X_s(T)}^2  (1+\sigma)^{-\frac{5}{4}-\frac{s}{2}} \int_\frac{t}{2}^{t}  (1+t-\sigma)^{-\frac{1}{2}}   \mathrm{d}\sigma \\
&\qquad \lesssim \ml{D}_{1,s+2}(t) \|(\psi_0,\psi_1,\psi_2)\|_{\ml{A}_s}^2 + \widetilde{\ml{D}}_{1,s+2}(t) \|\psi\|_{X_s(T)}^2.
\end{align*}

%
%
All in all, summarizing the above all derived estimates and making use of $\widetilde{\ml{D}}_{n,s+2}(t)\lesssim\ml{D}_{n,s+2}(t)$ for any $n\geqslant 1$, we directly conclude
\begin{align*}
	\|\psi^{\non}\|_{X_s(T)}\lesssim\|(\psi_0,\psi_1,\psi_2)\|_{\ml{A}_s}^2+\|\psi\|_{X_s(T)}^2,
\end{align*}
which says that the desired estimate \eqref{Cruc-01} holds if $s>(n/2-1)^+$ for all $n\geqslant 1$.

Let us sketch the proof of the Lipschitz condition via remarking
\begin{align*}
	\big\|\,|D|^{s+2-\ell}\partial_t^{\ell}\big(N\psi(t,\cdot)-N\bar{\psi}(t,\cdot)\big)\big\|_{L^2}=\left\|\,|D|^{s+2-\ell}\partial_t^{\ell}\int_0^tK_2(t-\sigma,\cdot)\ast \left(f\big(\psi(\sigma,\cdot)\big)-f\big(\bar{\psi}(\sigma,\cdot)\big)\right)\mathrm{d}\sigma\right\|_{L^2}
\end{align*}
for $s\in\{\ell-2\}\cup(0,\infty)$ and $\ell=0,1,2$. By using some estimates in Propositions \ref{Prop_Estimate_Solution_itself} and \ref{Prop_Estimate_Time_Derivative} again, we just need to estimate
$f_{0}(\psi)-f_{0}(\bar{\psi})$ in $L^1$, $L^2$ as well as $\dot{H}^{s+1}$ norms for $n \geqslant 1$; and $f_{0}(\psi)-f_{0}(\bar{\psi})$ in $\dot{H}^s$ norm for $n=1$.
Actually, we may estimate the difference of the nonlinear terms by
\begin{align*}
	|f_{0}(\psi)-f_{0}(\bar{\psi})|&\lesssim|(\psi_t)^{2}-(\bar{\psi}_t)^{2}|+|\, |\nabla\psi|^{2} -|\nabla\bar{\psi}|^{2}|\\
	&\lesssim|\psi_t(\psi_{t}-\bar{\psi}_{t})|+|(\psi_t-\bar{\psi}_t)\bar{\psi}_{t}|+|\nabla\psi\cdot\nabla(\psi-\bar{\psi})|+|\nabla(\psi-\bar{\psi})\cdot\nabla\bar{\psi}|
\end{align*}
and
\begin{align*}
	|f(\psi)-f(\bar{\psi})|&\lesssim|\psi_t\psi_{tt}-\bar{\psi}_t\bar{\psi}_{tt}|+|\nabla\psi\cdot\nabla\psi_t-\nabla\bar{\psi}\cdot\nabla\bar{\psi}_t|\\
	&\lesssim|\psi_t(\psi_{tt}-\bar{\psi}_{tt})|+|(\psi_t-\bar{\psi}_t)\bar{\psi}_{tt}|+|\nabla\psi\cdot\nabla(\psi_t-\bar{\psi}_t)|+|\nabla(\psi-\bar{\psi})\cdot\nabla\bar{\psi}_t|.
\end{align*}
Therefore, by repeating the same procedure as Subsection \ref{Subsec_Est_Nonlinear_Term}, we obtain
\begin{align*}
	\big\|f_{0}\big(\psi(\sigma,\cdot)\big)-f_{0}\big(\bar{\psi}(\sigma,\cdot)\big)\big\|_{L^m}\lesssim\|\psi-\bar{\psi}\|_{X_s(T)}\left(\|\psi\|_{X_s(T)}+\|\bar{\psi}\|_{X_s(T)}\right) (1+\sigma)^{-\frac{nm}{4}-\frac{n}{4}},
\end{align*}
where $m=1,2$. 
Furthermore, the next estimates hold:
\begin{align*}
	\big\|f_{0}\big(\psi(\sigma,\cdot)\big)-f_{0}\big(\bar{\psi}(\sigma,\cdot)\big)\big\|_{\dot{H}^{s+1}}\lesssim\|\psi-\bar{\psi}\|_{X_s(T)}\left(\|\psi\|_{X_s(T)}+\|\bar{\psi}\|_{X_s(T)}\right)
		(1+\sigma)^{-\frac{s+1}{2}-\frac{3n}{4}}
\end{align*}
for $n \geqslant 1$, and 
\begin{align*}
	\big\|f\big(\psi(\sigma,\cdot)\big)-f\big(\bar{\psi}(\sigma,\cdot)\big)\big\|_{\dot{H}^s}\lesssim\|\psi-\bar{\psi}\|_{X_s(T)}\left(\|\psi\|_{X_s(T)}+\|\bar{\psi}\|_{X_s(T)}\right)
		(1+\sigma)^{-\frac{s}{2}-\frac{5}{4}}
\end{align*}
for $n=1$.
Then, following the same steps as those in the proof of \eqref{Cruc-01}, we can conclude \eqref{Cruc-02}. The proof is complete.

\section{Large-time profiles for the JMGT equation}\label{Sec-4}
\subsection{First-order approximation by diffusion-waves}
Before stating Theorem \ref{thm:4.1}, we describe the first-order profiles of the nonlinear portion of solution to \eqref{JMGT_Dissipative} in the sense of the integral form (Duhamel's principle in \eqref{*N1}) by using $\ml{J}_0(t,|D|)$ and $\ml{J}_1(t,|D|)$, which were defined in \eqref{Symbol_J}.
\begin{prop}\label{N_Prop_First_Order_Prof}
Let $n \geqslant 2$.
The nonlinear part of the global (in time) solution constructed in Theorem \ref{Thm_GESDS} satisfies the following refined estimates:
\begin{align*}
	\left\|\partial_t^{\ell}\psi^{\non}(t,\cdot) + \frac{\tau}{|D|}\mathrm{e}^{\frac{\delta}{2}\Delta t}\partial_t^{\ell}\sin(|D|t) f_{0}\big(\psi(0,\cdot)\big)
	\right\|_{\dot{H}^{k}}&\lesssim (1+t)^{-\frac{k+\ell}{2}-\frac{n}{4}}\|(\psi_0,\psi_1,\psi_2)\|_{\ml{A}_s}
\end{align*}
for $0\leqslant k\leqslant s+2-\ell$ with $\ell=0,1,2$. 
\end{prop}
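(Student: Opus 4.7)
The plan is to start from the integration-by-parts representations \eqref{eq:29} and \eqref{eq:30} of $\partial_t^\ell\psi^{\non}$, which for $\ell=0,1$ give
\begin{align*}
&\partial_t^\ell\psi^{\non}(t,x)+\tfrac{\tau}{|D|}\mathrm{e}^{\frac{\delta}{2}\Delta t}\partial_t^\ell\sin(|D|t)f_0\big(\psi(0,x)\big)\\
&\quad=-\Big[(\partial_t^\ell K_2)(t,x)\ast f_0\big(\psi(0,x)\big)-\tfrac{\tau}{|D|}\mathrm{e}^{\frac{\delta}{2}\Delta t}\partial_t^\ell\sin(|D|t)f_0\big(\psi(0,x)\big)\Big]\\
&\quad\quad-\int_0^t (\partial_t^{\ell+1}K_2)(t-\sigma,x)\ast f_0\big(\psi(\sigma,x)\big)\,\mathrm{d}\sigma,
\end{align*}
with an additional pointwise-in-time term $f_0(\psi(t,x))$ appearing for $\ell=2$. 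So the target reduces to controlling, in $\dot H^k$, three pieces: (i) the first-order approximation error of the data-type convolution $(\partial_t^\ell K_2)\ast f_0(\psi(0,\cdot))$, (ii) the nonlinear convolution integral, and (iii) the remainder $f_0(\psi(t,\cdot))$ in the $\ell=2$ case.

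For piece (i), I would apply Corollary \ref{coro:2.2} with $m=0$ to $g:=f_0(\psi(0,\cdot))$, obtaining the bound $(1+t)^{-(k+\ell)/2-n/4}\|f_0(\psi(0,\cdot))\|_{\dot H^{(k+\ell-2)^+}\cap L^1}$. The nonlinear estimates of Substep~A1 bound the latter norm by $\|(\psi_0,\psi_1,\psi_2)\|_{\ml A_s}^2$, and the smallness condition $\|(\psi_0,\psi_1,\psi_2)\|_{\ml A_s}\leqslant\epsilon$ from Theorem \ref{Thm_GESDS} reduces this quadratic factor to a linear one, producing exactly the asserted dependence.

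For piece (ii), I would split $[0,t]=[0,t/2]\cup[t/2,t]$. On $[0,t/2]$, estimate \eqref{eq:2.3} of Corollary \ref{coro_Estimate_Time_Derivative} applied to $\partial_t^{\ell+1}K_2$ yields kernel decay $(1+t-\sigma)^{-(k+\ell+1)/2-n/4}\sim(1+t)^{-(k+\ell+1)/2-n/4}$; combining with the Substep~A1 bound $\|f_0(\psi(\sigma,\cdot))\|_{\dot H^{(k+\ell-1)^+}\cap L^1}\lesssim(1+\sigma)^{-n/2}\|\psi\|_{X_s(T)}^2$ (obtained by interpolating \eqref{Est_02}, \eqref{Est_03} and \eqref{eq:3.1}) and integrating in $\sigma$ produces at worst $(1+t)^{-(k+\ell+1)/2-n/4}\ln(\mathrm{e}+t)$ when $n=2$ and $(1+t)^{-(k+\ell+1)/2-n/4}$ when $n\geqslant 3$, both absorbed into the target rate $(1+t)^{-(k+\ell)/2-n/4}$. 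On $[t/2,t]$, I would use the $\dot H^{k+\ell-1}\to\dot H^k$ estimate \eqref{eq:2.4}, losing only $(1+t-\sigma)^{-1/2}$ in the kernel but gaining faster decay from $f_0(\psi(\sigma,\cdot))$ in $\dot H^{k+\ell-1}$ at $\sigma\sim t$.

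The main obstacle is piece (iii) for $\ell=2$. Interpolating the Substep~A1 estimates gives $\|f_0(\psi(t,\cdot))\|_{\dot H^k}\lesssim(1+t)^{-k/2-3n/4}\|\psi\|_{X_s(T)}^2$, which reaches the required $(1+t)^{-(k+2)/2-n/4}$ exactly when $3n/4\geqslant 1+n/4$, i.e.\ when $n\geqslant 2$. This is the structural reason for the restriction $n\geqslant 2$ in the statement. The logarithmic issue at $n=2$ on $[0,t/2]$ is a secondary bookkeeping matter, since the half-power of extra decay produced by having $\partial_t^{\ell+1}K_2$ rather than $\partial_t^\ell K_2$ inside the convolution after integration by parts in \eqref{eq:29}--\eqref{eq:30} dominates $\ln(\mathrm{e}+t)$ for large time.
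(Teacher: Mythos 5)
Your overall route coincides with the paper's: you start from the integrated-by-parts Duhamel representations \eqref{eq:29}--\eqref{eq:30}, estimate the data-type convolution $(\partial_t^{\ell}K_2)(t,\cdot)\ast f_0\big(\psi(0,\cdot)\big)$ minus its profile by Corollary \ref{coro:2.2} with $m=0$ (with $\psi_2$ replaced by $f_0(\psi(0,\cdot))$, whose $\dot H^{(k+\ell-2)^+}\cap L^1$ norm is controlled by $\|(\psi_0,\psi_1,\psi_2)\|_{\ml A_s}^2$), bound the Duhamel integral by the linear kernel estimates combined with the Step-A bounds on $f_0$ (this is exactly what the paper packages as \eqref{eq:68}--\eqref{eq:69}, i.e. the computations \eqref{*N2}--\eqref{*N3}), and use $\|\psi\|_{X_s(T)}\lesssim\|(\psi_0,\psi_1,\psi_2)\|_{\ml A_s}\leqslant\epsilon$ to reduce the quadratic data dependence to a linear one. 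Your explicit treatment of the extra term $f_0\big(\psi(t,\cdot)\big)$ for $\ell=2$, with rate $(1+t)^{-\frac k2-\frac{3n}{4}}$ and the observation that $\frac{3n}{4}\geqslant 1+\frac n4$ precisely when $n\geqslant 2$, fills in what the paper dismisses with ``the other cases can be shown in a same way''.

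There is, however, one concrete slip. Estimate \eqref{eq:2.3} of Corollary \ref{coro_Estimate_Time_Derivative}, applied at regularity $k$ (i.e.\ with its parameter $s$ taken as $k+\ell-2$), gives the kernel rate $(1+t-\sigma)^{-\frac{k+\ell}{2}-\frac n4}$, not $(1+t-\sigma)^{-\frac{k+\ell+1}{2}-\frac n4}$ as you assert; equivalently, $|\partial_t^{\ell+1}\widehat K_2(t,|\xi|)|\lesssim|\xi|^{\ell}\mathrm e^{-c|\xi|^2t}$ at small frequencies, so convolution with $L^1$-data can only produce the exponent $-\frac{k+\ell}{2}-\frac n4$. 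For $n\geqslant 3$ this is harmless, since $\int_0^{t/2}(1+\sigma)^{-\frac n2}\mathrm d\sigma$ is bounded and the $[0,t/2]$ piece still lands exactly on the target rate. For $n=2$, though, the $[0,t/2]$ piece becomes $(1+t)^{-\frac{k+\ell}{2}-\frac12}\ln(\mathrm e+t)$, i.e.\ precisely $\widetilde{\ml D}_{2,k+\ell}(t)$, and the ``extra half-power'' you invoke to absorb the logarithm is simply not available from \eqref{eq:2.3}; the integration by parts gains one time-derivative on the kernel but simultaneously replaces $f$ by $f_0$, whose $L^1$ norm decays only like $(1+\sigma)^{-1}$ when $n=2$. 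So your justification of the two-dimensional case is incomplete as written. To be fair, the paper's own proof is no sharper at this exact point: it bounds the Duhamel term by $\widetilde{\ml D}_{n,k}(t)\|(\psi_0,\psi_1,\psi_2)\|_{\ml A_s}$ via \eqref{eq:68}--\eqref{eq:69} and then asserts the rate $(1+t)^{-\frac n4-\frac k2}$, which for $n=2$ hides the same logarithm, so you are not missing an idea the paper supplies; but your stated mechanism for removing the log is incorrect and should either be dropped (accepting a possible $\ln(\mathrm e+t)$ loss at $n=2$) or replaced by a genuinely sharper bound on $\|f_0(\psi(\sigma,\cdot))\|_{L^1}$ or on the kernel.
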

\begin{proof}
From Theorem \ref{Thm_GESDS}, it is clear that $\|\psi\|_{X_s(T)}\lesssim\|(\psi_0,\psi_1,\psi_2)\|_{\ml{A}_s}$, which is small. We recall the derived estimates in Subsection \ref{subs:3.4}, e.g. \eqref{*N2} and \eqref{*N3} to have 
\begin{align}\label{eq:68}
\left\| 
\int_0^t \partial_{t}^{\ell+1} K_2(t-\sigma,\cdot)\ast f_{0}\big(\psi(\sigma,\cdot)\big)\mathrm{d}\sigma
\right\|_{L^2}
\lesssim \widetilde{\ml{D}}_{n,\ell}(t)	\|(\psi_0,\psi_1,\psi_2)\|_{\ml{A}_s},
\end{align}
and 
\begin{align}\label{eq:69}
\left\| |D|^{s+2-\ell}
\int_0^t \partial_{t}^{\ell+1} K_2(t-\sigma,\cdot)\ast f_{0}\big(\psi(\sigma,\cdot)\big)\mathrm{d}\sigma
\right\|_{L^2}
\lesssim \widetilde{\ml{D}}_{n,s+2}(t)\|(\psi_0,\psi_1,\psi_2)\|_{\ml{A}_s},
\end{align}
where we took $\ell=0,1,2$. Therefore, when $\ell=0$ in our aim estimate, we may use the representation \eqref{eq:29} associated with \eqref{eq:68}, \eqref{eq:69} and \eqref{eq:43} with $m=0$ as well as the replacement of source term to see
\begin{align*} 
	& \big\|\psi^{\non}(t,\cdot) + \tau \ml{J}_0(t,|D|) f_{0}\big(\psi(0, \cdot)\big)
	\big\|_{\dot{H}^{k}} \\
	&\qquad\lesssim \big\| K_{2} (t,\cdot) \ast f_{0}\big(\psi(0,\cdot)\big) - \tau \ml{J}_0(t,|D|) f_{0}\big(\psi(0, \cdot)\big)
	\big\|_{\dot{H}^{k}}
	+ \widetilde{\ml{D}}_{n,k}(t) \|(\psi_0,\psi_1,\psi_2)\|_{\ml{A}_s}\\
	&\qquad\lesssim
	 (1+t)^{-\frac{n}{4}-\frac{k}{2}}\|(\psi_0,\psi_1,\psi_2)\|_{\ml{A}_s}.
\end{align*}
The other cases can be shown in a same way, so we omit the detail.
\end{proof}
Once we have Proposition \ref{N_Prop_First_Order_Prof}, by combining with \eqref{eq:2.261}, 
we may easily obtain the approximation formulas of the nonlinear portion with the aid of diffusion-waves, in which we employed the triangle inequality.
\begin{coro}\label{N_coro_First_Order_Prof}
Let $n \geqslant 2$.
Then, the global (in time) small data solution constructed in Theorem \ref{Thm_GESDS} fulfills the following estimates:
\begin{align*}
	\left\|\partial_{t}^{\ell} \psi^{\non}(t,\cdot)-\widetilde{\psi}^{(1, \ell)}(t,\cdot)
	\right\|_{\dot{H}^{k}}=o\big(\ml{D}_{n,k+\ell}(t) \big)
\end{align*}
as $t \gg 1$ for $0 \leqslant k \leqslant s+2-\ell$ with $\ell=0,1,2$,
where we define
\begin{align*}
\widetilde{\psi}^{(1, \ell)}(t,x) := 
\begin{cases} 
-\tau M_{00}J_0&\mbox{if} \ \ \ell=0,\\ 
- \tau M_{00} J_1 &\mbox{if} \ \ \ell=1,\\ 
-\tau M_{00} \Delta J_0 &\mbox{if} \ \ \ell=2,
\end{cases}
\end{align*} 
and 
\begin{align*}
M_{00} := \int_{\mathbb{R}^{n}} \left(\frac{B}{2A}|\psi_{1}(x)|^2+|\nabla\psi_{0}(x)|^2 \right)\mathrm{d}x.
\end{align*}
\end{coro}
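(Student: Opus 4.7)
The plan is to combine Proposition~\ref{N_Prop_First_Order_Prof} with the diffusion-wave asymptotic \eqref{eq:2.261} via the triangle inequality. Since the solution of Theorem~\ref{Thm_GESDS} is constructed from data $(\psi_0,\psi_1,\psi_2)\in\ml{A}_s$, in particular $\psi_1\in L^2$ and $\nabla\psi_0\in L^2$, so
\begin{align*}
f_0\big(\psi(0,\cdot)\big)=\frac{B}{2A}|\psi_1|^2+|\nabla\psi_0|^2\in L^1,\qquad M_{00}=\int_{\mb{R}^n}f_0\big(\psi(0,x)\big)\,\mathrm{d}x
\end{align*}
is a well-defined scalar, and \eqref{eq:2.261} is applicable to the source $g:=f_0(\psi(0,\cdot))$.

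A direct symbol computation (using $\partial_t\sin(|D|t)=|D|\cos(|D|t)$ and $\partial_t^2\sin(|D|t)=-|D|^2\sin(|D|t)$) identifies the Fourier multiplier $\tfrac{1}{|D|}\mathrm{e}^{\frac{\delta}{2}\Delta t}\partial_t^{\ell}\sin(|D|t)$ appearing in Proposition~\ref{N_Prop_First_Order_Prof} with $\ml{J}_0(t,|D|)$, $\ml{J}_1(t,|D|)$, and $\Delta\ml{J}_0(t,|D|)$ for $\ell=0,1,2$ respectively. Writing $\ml{K}_{\ell}$ for this triple of operators and $K_{\ell}$ for the corresponding diffusion-wave kernels $(J_0,J_1,\Delta J_0)$, Proposition~\ref{N_Prop_First_Order_Prof} then reads
\begin{align*}
\big\|\partial_t^{\ell}\psi^{\non}(t,\cdot)+\tau\,\ml{K}_{\ell}(t,|D|)g\big\|_{\dot{H}^k}\lesssim(1+t)^{-\frac{k+\ell}{2}-\frac{n}{4}}\|(\psi_0,\psi_1,\psi_2)\|_{\ml{A}_s},
\end{align*}
while the target profile is simply $\widetilde{\psi}^{(1,\ell)}=-\tau M_{00}K_{\ell}(t,\cdot)$.

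Applying \eqref{eq:2.261} to $g\in L^1$ with index pair $(j,m)=(0,k)$ for $\ell=0$, $(1,k)$ for $\ell=1$, and $(0,k+2)$ for $\ell=2$ (the last case commuting the two Laplacian derivatives outside the $\dot{H}^k$ norm) gives
\begin{align*}
\|\ml{K}_{\ell}(t,|D|)g-M_{00}K_{\ell}(t,\cdot)\|_{\dot{H}^k}=o\big(\ml{D}_{n,k+\ell}(t)\big)\quad\text{as } t\gg1.
\end{align*}
Combining this with the previous display via the triangle inequality gives the stated conclusion, provided the polynomial remainder $(1+t)^{-(k+\ell)/2-n/4}$ of Proposition~\ref{N_Prop_First_Order_Prof} is itself $o(\ml{D}_{n,k+\ell}(t))$. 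This is immediate for $n\geqslant2$: when $k+\ell\geqslant1$, $\ml{D}_{n,k+\ell}(t)=(1+t)^{-(k+\ell-1)/2-n/4}$ exceeds the remainder by a factor $(1+t)^{1/2}$; when $k=\ell=0$, $\ml{D}_n(t)$ equals $\sqrt{\ln(\mathrm{e}+t)}$ for $n=2$ and $(1+t)^{1/2-n/4}$ for $n\geqslant3$, each strictly dominating $(1+t)^{-n/4}$ as $t\gg1$.

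I do not foresee a genuine obstacle: all analytic content has already been packaged in Proposition~\ref{N_Prop_First_Order_Prof} and in the diffusion-wave expansion \eqref{eq:2.261}. The only minor bookkeeping is the $\ell=2$ case, where recognising the relevant operator as $\Delta\ml{J}_0$ rather than a new propagator allows one to invoke \eqref{eq:2.261} at the shifted regularity level $k+2$ so as to retain the sharp $o(\ml{D}_{n,k+2}(t))$ bound, producing the $\Delta J_0$ that appears inside $\widetilde{\psi}^{(1,2)}$.
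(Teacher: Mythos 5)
Your proposal is correct and follows essentially the same route as the paper: the paper also obtains the corollary by combining Proposition \ref{N_Prop_First_Order_Prof} with \eqref{eq:2.261} via the triangle inequality, the only bookkeeping being the identification of the multipliers $\ml{J}_0$, $\ml{J}_1$, $\Delta\ml{J}_0$ for $\ell=0,1,2$ and the check that the polynomial remainder is $o(\ml{D}_{n,k+\ell}(t))$ for $n\geqslant2$, exactly as you did.
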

Moreover, summing up Propositions \ref{prop:2.6} and \ref{N_Prop_First_Order_Prof}, as well as Corollary \ref{N_coro_First_Order_Prof} by applying \eqref{*N4}, 
we conclude the first-order approximation of the global (in time) solution to \eqref{JMGT_Dissipative} and the lower bounds of norms.
\begin{theorem}\label{thm:4.1}
Let us take the same assumption as those in Theorem \ref{Thm_GESDS} with $n \geqslant 2$.
\begin{description}
	\item[(i)] Then, the refined estimates hold
	\begin{align*}
	\left\|\partial_{t}^{\ell} \psi(t,\cdot)-\psi^{(1, \ell)}(t,\cdot) -\widetilde{\psi}^{(1, \ell)}(t,\cdot)
	\right\|_{\dot{H}^{k}}=o\big(\ml{D}_{n,k+\ell}(t) \big)
\end{align*}
as $t \gg 1$ for $0 \leqslant k \leqslant s+2-\ell$ with $\ell=0,1,2$.
	\item[(ii)] Suppose that $M_{1}+\tau M_{2}-\tau M_{00} \neq 0$. Then, the following estimates hold:
	\begin{align*}
		|M_{1}+\tau M_{2}-\tau M_{00}| \ml{D}_{n,k+\ell}(t) \lesssim \|\partial_t^{\ell}\psi(t,\cdot)
		\|_{\dot{H}^{k}}\lesssim \ml{D}_{n,k+\ell}(t)  \|(\psi_0 ,\psi_1 ,\psi_2 )\|_{\ml{A}_s} 
	\end{align*}
	as $t \gg 1$ for $\ell=0$ when $k=0$ or $k\geqslant 1$, and $\ell=1,2$ when $k\geqslant0$.
\end{description}
\end{theorem}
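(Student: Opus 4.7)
The plan is to exploit the splitting $\psi = \psi^{\lin} + \psi^{\non}$ from \eqref{*N4} and reduce part (i) to combining the approximation estimates already at our disposal, and then to obtain part (ii) via the sharpness statement for diffusion-waves in Proposition \ref{prop:2.8}(i) applied with a modified mass constant. Throughout we assume $t \gg 1$ so that the $o(\cdot)$ statements from earlier in the paper may be invoked.

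For part (i), I would write
\begin{align*}
\partial_{t}^{\ell}\psi(t,\cdot)-\psi^{(1,\ell)}(t,\cdot)-\widetilde{\psi}^{(1,\ell)}(t,\cdot)
=\bigl(\partial_{t}^{\ell}\psi^{\lin}(t,\cdot)-\psi^{(1,\ell)}(t,\cdot)\bigr)
+\bigl(\partial_{t}^{\ell}\psi^{\non}(t,\cdot)-\widetilde{\psi}^{(1,\ell)}(t,\cdot)\bigr)
\end{align*}
and apply the triangle inequality in $\dot{H}^{k}$. The first bracket is $o(\ml{D}_{n,k+\ell}(t))$ by Proposition \ref{prop:2.6} (applied with data $(\psi_0,\psi_1,\psi_2)\in\ml{A}_s$, which is guaranteed by the hypotheses of Theorem \ref{Thm_GESDS}); the second bracket is $o(\ml{D}_{n,k+\ell}(t))$ by Corollary \ref{N_coro_First_Order_Prof}. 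This settles (i) directly, without further computation.

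For part (ii), the key observation is that the sum of the two first-order profiles collapses to a single multiple of the relevant diffusion-wave:
\begin{align*}
\psi^{(1,\ell)}(t,x)+\widetilde{\psi}^{(1,\ell)}(t,x)=(M_{1}+\tau M_{2}-\tau M_{00})\times\begin{cases} J_{0}(t,x) & \text{if } \ell=0,\\ J_{1}(t,x) & \text{if } \ell=1,\\ \Delta J_{0}(t,x) & \text{if } \ell=2.\end{cases}
\end{align*}
So under the assumption $M_{1}+\tau M_{2}-\tau M_{00}\neq 0$, Proposition \ref{prop:2.8}(i), whose proof depends only on the scalar prefactor in front of the diffusion-wave, yields
\begin{align*}
|M_{1}+\tau M_{2}-\tau M_{00}|\,\ml{D}_{n,k+\ell}(t)\lesssim\bigl\|\psi^{(1,\ell)}(t,\cdot)+\widetilde{\psi}^{(1,\ell)}(t,\cdot)\bigr\|_{\dot{H}^{k}}.
\end{align*}
Combining this with part (i) via the reverse triangle inequality
\begin{align*}
\|\partial_{t}^{\ell}\psi(t,\cdot)\|_{\dot{H}^{k}}\geqslant\bigl\|\psi^{(1,\ell)}(t,\cdot)+\widetilde{\psi}^{(1,\ell)}(t,\cdot)\bigr\|_{\dot{H}^{k}}-\bigl\|\partial_{t}^{\ell}\psi(t,\cdot)-\psi^{(1,\ell)}(t,\cdot)-\widetilde{\psi}^{(1,\ell)}(t,\cdot)\bigr\|_{\dot{H}^{k}}
\end{align*}
gives the lower bound for $t$ sufficiently large, since the subtracted quantity is $o(\ml{D}_{n,k+\ell}(t))$ and is therefore absorbed into the positive leading term.

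The matching upper bound is already available: the bound for $\partial_{t}^{\ell}\psi^{\lin}$ follows from Propositions \ref{Prop_Estimate_Solution_itself} and \ref{Prop_Estimate_Time_Calling}--sorry, \ref{Prop_Estimate_Time_Derivative}, and the bound for $\partial_{t}^{\ell}\psi^{\non}$ is precisely what was shown (in even sharper form with $\widetilde{\ml{D}}_{n,\ell}(t)$ in place of $\ml{D}_{n,\ell}(t)$) in the derivations \eqref{*N2}, \eqref{*N3} of Subsection \ref{subs:3.4}. Adding these and recalling $\|(\psi_0,\psi_1,\psi_2)\|_{\ml{A}_s}\leqslant\epsilon$ yields the $\lesssim\ml{D}_{n,k+\ell}(t)\|(\psi_0,\psi_1,\psi_2)\|_{\ml{A}_s}$ side. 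I expect no serious obstacle here; the only delicate point is tracking the exact algebraic combination $M_{1}+\tau M_{2}-\tau M_{00}$ and verifying that the hypotheses of Proposition \ref{prop:2.8}(i) on the power $k$ and derivative order $\ell$ match the ones stated in the present theorem, which they do verbatim.
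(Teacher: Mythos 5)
Your proposal is correct and follows essentially the same route as the paper: decompose $\psi=\psi^{\lin}+\psi^{\non}$ via \eqref{*N4}, apply Proposition \ref{prop:2.6} to the linear part and Corollary \ref{N_coro_First_Order_Prof} to the nonlinear part for (i), and obtain the lower bound in (ii) by noting that the two profiles sum to $(M_1+\tau M_2-\tau M_{00})$ times the relevant diffusion-wave and repeating the sharpness argument of Proposition \ref{prop:2.8}(i) together with the reverse triangle inequality, with the upper bound coming from the linear estimates and the bounds of Subsection \ref{subs:3.4}. No gaps; this is the paper's argument.
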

\begin{remark}
It is easily seen that for one-dimensional case, the estimates in Corollary \ref{N_coro_First_Order_Prof} and Theorem \ref{thm:4.1} are valid when $\ell=0$ and $k=0$,
since $\ml{D}_{1,k}(t)=\widetilde{\ml{D}}_{1,k}(t)$ for all $k \geqslant 1$.
\end{remark}
\begin{remark}\label{Rem-4.n}
	According to Theorem \ref{thm:4.1}, the first-order profile of global (in time) small data solution to the JMGT equation \eqref{JMGT_Dissipative} can be described by diffusion-waves such that
	\begin{align*}
	\psi(t,x)\sim\int_{\mb{R}^n}\left(\psi_1(x)+\tau\psi_2(x)-\frac{\tau B}{2A}|\psi_1(x)|^2-\tau|\nabla\psi_0(x)|^2\right)\mathrm{d}x\, J_0(t,x)
	\end{align*}
as $t\gg1$. Moreover, the second conclusion of Theorem \ref{thm:4.1} tells us the optimality of derived estimates in Theorem \ref{Thm_GESDS}.
\end{remark}

\subsection{Second-order approximation by diffusion-waves}
This subsection is devoted to the study of second-order profiles and second-order approximation by diffusion-waves of the global (in time) small data solutions to the nonlinear problem \eqref{JMGT_Dissipative}.
To do so, one may study the  crucial estimates for the integral term in the expression formulas in \eqref{eq:28}-\eqref{eq:30}.
\begin{prop} \label{prop:4.2}
Let $n \geqslant 3$. Then, the following refined estimates for the kernel hold for $t \gg 1$:
\begin{align}\label{eq:72}
\left\| 
\int_0^t \partial_{t} K_2(t-\sigma,\cdot)\ast f_{0}\big(\psi(\sigma,\cdot)\big)\mathrm{d}\sigma - M_{\non} J_{1}(t,\cdot)
\right\|_{\dot{H}^{k}}
&=o(t^{-\frac{k}{2}-\frac{n}{4}})\ \ \ \ \mbox{for}\ \ 0\leqslant k\leqslant s+2, \\
\left\| 
\int_0^t \partial_{t}^{2} K_2(t-\sigma,\cdot)\ast f_{0}\big(\psi(\sigma,\cdot)\big)\mathrm{d}\sigma - M_{\non} \Delta J_{0}(t,\cdot)
\right\|_{\dot{H}^{k}}
&=o(t^{-\frac{k+1}{2}-\frac{n}{4}})\ \ \mbox{for}\ \ 0\leqslant k\leqslant s+1,\label{eq:73} \\
\left\| 
\int_0^t \partial_{t}^{3} K_2(t-\sigma,\cdot)\ast f_{0}\big(\psi(\sigma,\cdot)\big)\mathrm{d}\sigma - M_{\non} \Delta J_{1}(t,\cdot)
\right\|_{\dot{H}^{k}}
&=o(t^{-\frac{k+2}{2}-\frac{n}{4}})\ \  \mbox{for}\ \ 0\leqslant k\leqslant s, \label{eq:74}
\end{align}
where we denote
\begin{align*}
M_{\non} := \int_{0}^{\infty} \int_{\mathbb{R}^{n}} f_{0}\big(\psi(\sigma,x)\big)  \mathrm{d}x\mathrm{d} \sigma.
\end{align*}
\end{prop}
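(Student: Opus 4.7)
The plan is to establish all three estimates in parallel by combining the refined kernel asymptotics from Corollary~\ref{coro:2.2} with the time-decay properties of $f_{0}(\psi(\sigma,\cdot))$ inherited from Theorem~\ref{Thm_GESDS}. I would split the Duhamel integral as $\int_{0}^{t}=\int_{0}^{t/2}+\int_{t/2}^{t}$, so that on the first subinterval $t-\sigma\geq t/2$ makes the diffusion-wave profile of $\partial_{t}^{\ell}K_{2}$ accurate, while on the second subinterval the fast temporal decay of the nonlinearity at $\sigma\sim t$ compensates for the kernel. Demonstrating \eqref{eq:72} in detail should suffice, since \eqref{eq:73} and \eqref{eq:74} follow by the same mechanism applied to $\partial_{t}^{2}K_{2}\sim\tau\Delta J_{0}$ and $\partial_{t}^{3}K_{2}\sim\tau\Delta J_{1}$.

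On $[t/2,t]$, applying the $L^{2}$-estimate \eqref{eq:2.4} for $\partial_{t}^{\ell+1}K_{2}$ together with the $\dot{H}^{s+1}$ bound $\|f_{0}(\psi(\sigma,\cdot))\|_{\dot{H}^{s+1}}\lesssim (1+\sigma)^{-3n/4-(s+1)/2}$ from \eqref{eq:3.1}, interpolated appropriately down to $\dot{H}^{k}$, yields a contribution of order $(1+t)^{-k/2-3n/4-1/2}$, which is $o(t^{-k/2-n/4})$ for $n\geq 3$. A parallel bound shows that $M_{\non}J_{j}(t,\cdot)$ truncated to the corresponding portion of time is also negligible. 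On $[0,t/2]$ the approximation proceeds in three nested steps: first, replace $\partial_{t}^{\ell}K_{2}(t-\sigma,\cdot)$ by its first-order diffusion-wave profile via Corollary~\ref{coro:2.2}, producing an error $\lesssim (1+t-\sigma)^{-k/2-n/4-1/2}\|f_{0}(\psi(\sigma,\cdot))\|_{L^{1}}$ whose time integral is finite because $\|f_{0}(\psi(\sigma,\cdot))\|_{L^{1}}\lesssim(1+\sigma)^{-n/2}$ is integrable precisely when $n\geq 3$; second, replace $J_{j}(t-\sigma,x-y)$ by $J_{j}(t,x)$ through a Taylor-expansion argument analogous to \eqref{eq:2.293}--\eqref{eq:2.294}, whose remainder extracts an extra temporal gain and upgrades the bound to a little-$o$; third, extend $\int_{0}^{t/2}$ to $\int_{0}^{\infty}$, with the tail controlled by $\int_{t/2}^{\infty}(1+\sigma)^{-n/2}\,\mathrm{d}\sigma\to 0$ as $t\to\infty$. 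After these three reductions the leading term is $J_{j}(t,x)\int_{0}^{\infty}\int_{\mathbb{R}^{n}}f_{0}(\psi(\sigma,y))\,\mathrm{d}y\,\mathrm{d}\sigma=M_{\non}J_{j}(t,x)$, as required.

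The main obstacle will be extracting little-$o$ decay rather than big-$O$ control at each reduction, most notably in the spatial-translation step, where I must verify that $J_{j}(t-\sigma,x-y)-J_{j}(t,x)$ produces a rate strictly faster than the ambient scaling. This seems to require a dominated-convergence argument combining the smoothness and decay of $J_{j}$ with the $L^{1}_{1}$-type integrability of $f_{0}(\psi)$, which in turn has to be derived from the uniform $\dot{H}^{s+1}$ and $L^{2}$ bounds of Theorem~\ref{Thm_GESDS} together with the spatial localization of the initial data. The restriction $n\geq 3$ is essential at two points: the kernel replacement error integrates in time only when $n/2>1$, and the quantity $M_{\non}$ itself is well-defined only under this same condition, which is precisely why the proposition is stated for $n\geq 3$.
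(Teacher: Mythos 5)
Your overall architecture is the same as the paper's: on an early time interval replace $\partial_t^{\ell}K_2(t-\sigma,\cdot)\ast$ by its diffusion-wave profile (Corollary \ref{coro:2.2}), then shift the time and space arguments of $J_1$, extend the $\sigma$-integral to infinity, and dispose of the late-time Duhamel piece by the decay of $f_0(\psi(\sigma,\cdot))$; your treatment of the kernel-replacement error, of the tail $\int_{t/2}^{\infty}$, and of the piece near $\sigma\sim t$ is fine for $n\geqslant 3$ (up to a sign slip in the exponent you quote there, which does not matter). The genuine gap is in the shift step. Because you cut the Duhamel integral at $t/2$, the replacement of $\ml{J}_1(t-\sigma,|D|)$ by $\ml{J}_1(t,|D|)$ via the mean value theorem in time produces an error carrying a factor $\sigma$, and you must integrate $\sigma\,\|f_0(\psi(\sigma,\cdot))\|_{L^1}\lesssim \sigma(1+\sigma)^{-\frac{n}{2}}$ up to $t/2$: for $n=3$ this integral grows like $t^{\frac12}$, so the resulting bound is of size $(1+t)^{-\frac12-\frac{n}{4}}\,t^{\frac12}\sim t^{-\frac{n}{4}}$, i.e.\ only $O(t^{-\frac{k}{2}-\frac{n}{4}})$ and not the required $o(t^{-\frac{k}{2}-\frac{n}{4}})$ (for $n=4$ you are left with only a logarithmic margin). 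Your phrase ``extracts an extra temporal gain'' is exactly the point that needs a device, and as stated it fails in the most relevant case $n=3$. The paper avoids this by cutting the early interval at $t^{\frac12}$ (its terms $I_5$--$I_8$ live on $[0,t^{\frac12}]$, and the remaining piece $I_9$ on $[t^{\frac12},t]$ is then split again at $t/2$); with that cut the same computation gives $t^{-\frac14-\frac{n}{4}}$ when $n=3$. Equivalently, you could keep $t/2$ but insert a secondary splitting of the time-shift error at $t^{\frac12}$, using on $[t^{\frac12},t/2]$ only the crude bound $t^{-\frac{n}{4}}\|f_0(\psi(\sigma,\cdot))\|_{L^1}$ together with $\int_{t^{1/2}}^{\infty}\|f_0(\psi(\sigma,\cdot))\|_{L^1}\,\mathrm{d}\sigma\to0$; either way this extra splitting is the missing ingredient.

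A second problem is your spatial-shift step: you invoke ``$L^1_1$-type integrability of $f_0(\psi)$'' and propose to derive it from Theorem \ref{Thm_GESDS}. No weighted-in-space bound for the nonlinear solution, hence none for $f_0(\psi(\sigma,\cdot))$, is established anywhere in the paper, and it does not follow from the $L^2$/$\dot H^{s+1}$ estimates of Theorem \ref{Thm_GESDS}; propagating spatial moments for the JMGT solution would be a separate, nontrivial task (even Theorem \ref{thm:4.2} only assumes $L^1_1$ of the \emph{data}). It is also unnecessary for the first-order correction $M_{\non}J_1$: the paper splits $|y|\lessgtr t^{\frac14}$, uses the mean value theorem with the factor $|y|\leqslant t^{\frac14}$ only on the near region (costing $t^{\frac14}\|\nabla J_1(t,\cdot)\|_{\dot H^{k}}\lesssim t^{-\frac14-\frac{k}{2}-\frac{n}{4}}$), and on the far region uses nothing but the triangle inequality and dominated convergence, namely $\int_0^{\infty}\int_{|y|\geqslant t^{1/4}}|f_0(\psi(\sigma,y))|\,\mathrm{d}y\,\mathrm{d}\sigma\to0$, which only requires $f_0(\psi)\in L^1((0,\infty)\times\mb{R}^n)$, available from $\|f_0(\psi(\sigma,\cdot))\|_{L^1}\lesssim(1+\sigma)^{-\frac{n}{2}}$ and $n\geqslant3$. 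Replacing your weighted-integrability claim by this region splitting, and adding the $t^{\frac12}$ cut above, turns your outline into the paper's proof. (One bookkeeping remark: inserting the profile of Corollary \ref{coro:2.2} literally produces the constant $\tau M_{\non}$, so keep track of the factor $\tau$ when matching the statement and its use in Theorem \ref{thm:4.2}.)
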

\begin{remark}
Our assumption $n \geqslant 3$ is to ensure $|M_{\non}| < \infty$, which comes from the obtained estimate \eqref{Est_02} and $(1+\sigma)^{-\frac{n}{2}} \in L^{1}(0,\infty)$ holding for any $n \geqslant 3$.
\end{remark}
\begin{proof}
The proofs of estimates \eqref{eq:72}-\eqref{eq:74} are quite similar. We remark that due to smallness assumption on initial datum, one notices $\|\psi\|_{X_s(T)}\lesssim 1$.
Here, we only prove the estimate \eqref{eq:72} with $k=0$.
At first, we decompose the integrand into five parts
\begin{align*}
\int_0^t \partial_{t} K_2(t-\sigma,x)\ast f_{0}\big(\psi(\sigma,x)\big)\mathrm{d}\sigma - M_{\non} J_{1}(t,x)
=\sum\limits_{k=5}^{9}I_{k}(t,x),
\end{align*}
where
\begin{align*}
I_{5}(t,x) & := \int_0^{t^{\frac{1}{2}}}
\partial_{t} K_2(t-\sigma,x) \ast f_{0}\big(\psi(\sigma,x)\big)\mathrm{d}\sigma -\int_0^{t^{\frac{1}{2}}} \ml{J}_1(t-\sigma,|D|) f_{0}\big(\psi(\sigma,x)\big)\mathrm{d}\sigma,\\
I_{6}(t,x) & := \int_0^{t^{\frac{1}{2}}} \big( \ml{J}_1(t-\sigma,|D|) - \ml{J}_1(t,|D|)\big)  f_{0}\big(\psi(\sigma,x)\big)\mathrm{d}\sigma,\\
I_{7}(t,x) & := \int_0^{t^{\frac{1}{2}}} \int_{\mathbb{R}^{n} } \big( J_1(t,x-y) - J_1(t,x) \big)  f_{0}\big(\psi(\sigma,y)\big) \mathrm{d}y\mathrm{d}\sigma,\\
I_{8}(t,x) & := -J_1(t,x) \int_{t^{\frac{1}{2}}}^{\infty} \int_{\mathbb{R}^{n} } f_{0}\big(\psi(\sigma,y)\big) \mathrm{d}y \mathrm{d}\sigma,\\
I_9(t,x)&:=\int_{t^{\frac{1}{2}}}^t\partial_tK_2(t-\sigma,x)\ast f_0\big(\psi(\sigma,x)\big)\mathrm{d}\sigma,
\end{align*}
as $t\gg 1$ for $n\geqslant 3$.
The first term in the above is estimated as follows: 
\begin{align*}
\| I_{5}(t,\cdot) \|_{L^{2}} & \lesssim \int_0^{t^{\frac{1}{2}}} (1+t-\sigma)^{-\frac{1}{2}-\frac{n}{4}} \big\| f_{0}\big(\psi(\sigma,\cdot)\big) \big\|_{L^{1} \cap L^{2} } \mathrm{d}\sigma \lesssim  (1+t)^{-\frac{1}{2}-\frac{n}{4}}\|(\psi_0,\psi_1,\psi_2)\|_{\ml{A}_s}^2, 	
\end{align*}
where we used the estimates \eqref{eq:43} with $\ell=1$ and $m=0$, \eqref{Est_02} and \eqref{Est_03}.
For the second component $I_{6}(t,x)$, applying the mean value theorem with respect to $t$, 
we see that  
\begin{align*}
|J_{1}(t-\sigma,x-y)-J_{1}(t,x-y)| \lesssim |\sigma \partial_{t} J_{1}(t-\theta \sigma, x-y)|  
\end{align*}
for $\theta \in (0,1)$. 
Therefore we arrive at the estimate: 
\begin{align*}
\| I_{6}(t,\cdot) \|_{L^{2}} & \lesssim \int_0^{t^{\frac{1}{2}}} \sigma \big\| \partial_{t} J_{1}(t-\theta \sigma, \cdot) \ast f_{0}\big(\psi(\sigma,\cdot)\big) \big\|_{L^{2} } \mathrm{d}\sigma \notag\\
& \lesssim  (1+t)^{-\frac{1}{2}-\frac{n}{4}} \int_0^{t^{\frac{1}{2}}} \sigma \big\| f_{0}\big(\psi(\sigma,\cdot)\big) \big\|_{L^{1} } \mathrm{d}\sigma +\mathrm{e}^{-ct} t^{-1} \int_0^{t^{\frac{1}{2}}} \sigma \| f_{0}(\psi(\sigma,\cdot)) \|_{L^{2} } \mathrm{d}\sigma \notag\\
& \lesssim  (1+t)^{-\frac{1}{2}-\frac{n}{4}} \int_0^{t^{\frac{1}{2}}} (1+\sigma)^{-\frac{n}{2}+1} \mathrm{d}\sigma +\mathrm{e}^{-ct} t^{-1} \int_0^{t^{\frac{1}{2}}} (1+\sigma)^{-\frac{3n}{4}} \mathrm{d}\sigma \notag\\
& \lesssim  t^{-\frac{1}{2}-\frac{n}{4}} \times 
\begin{cases}
	 t^{\frac{1}{4}}& \text{if}\ \ n=3, \\ 
	 \ln (\mathrm{e}+t)& \text{if}\ \ n=4, \\  
	 1& \text{if}\ \ n \geqslant 5,  
\end{cases}
\end{align*}
where we used the estimate 
\begin{align*}
\| \partial_{t} J_{1}(t, \cdot) \ast \psi_{2}(\cdot) \|_{L^{2}} \lesssim (1+t)^{-\frac{1}{2}-\frac{n}{4}} \| \psi_{2} \|_{L^{1}} +\mathrm{e}^{-ct} t^{-1} \| \psi_{2} \|_{L^{2}}.
\end{align*}
To show the estimate for $I_{7}(t,x)$, we split it into two parts as follows: 
\begin{align*}
I_{7}(t,x) & := I_{7,1}(t,x) +I_{7,2}(t,x),
\end{align*}
where
\begin{align*}
I_{7,1}(t,x) & := \int_0^{
	t^{\frac{1}{2}}
} \int_{|y| \leqslant t^{\frac{1}{4} } } \big( J_1(t,x-y) - J_1(t,x) \big)  f_{0}\big(\psi(\sigma,y)\big) \mathrm{d}y\mathrm{d}\sigma,\\
I_{7,2}(t,x) & := \int_0^{t^{\frac{1}{2}}} \int_{|y| \geqslant t^{\frac{1}{4} } } \big( J_1(t,x-y) - J_1(t,x) \big)  f_{0}\big(\psi(\sigma,y)\big) \mathrm{d}y\mathrm{d}\sigma.
\end{align*}
Now, we observe that 
\begin{align*}
\|\, |D| J_{1}(t, \cdot) \ast \psi_{2}(\cdot) \|_{L^{2}} \lesssim (1+t)^{-\frac{1}{2}-\frac{n}{4}} \| \psi_{2} \|_{L^{1}} +\mathrm{e}^{-ct}  \| \psi_{2} \|_{\dot{H}^{1}}
\end{align*}
by direct calculations.
We also have 
\begin{align*}
|J_{1}(t, x-y)-J_{1}(t,x)| \lesssim |y|\, |\nabla J_{1}(t,x- \tilde{\theta} y)|
\end{align*}
for some $\tilde{\theta} \in (0,1)$, 
by the mean value theorem for spatial variables $x$.
Combining these facts, we are able to investigate 
\begin{align*}
\| I_{7,1}(t,\cdot) \|_{L^{2}} & \lesssim t^{\frac{1}{4}} \| \nabla J_{1}(t,\cdot- \tilde{\theta} y) \|_{L^{2}}  
\int_0^{t^{\frac{1}{2}}} \int_{|y| \leqslant t^{\frac{1}{4} } }  \big| f_{0}\big(\psi(\sigma,y)\big) \big|\mathrm{d}y \mathrm{d}\sigma \\
& \lesssim t^{-\frac{1}{4}-\frac{n}{4}} 
\int_0^{t^{\frac{1}{2}}} (1+\sigma)^{-\frac{n}{2}} \mathrm{d}\sigma \lesssim t^{-\frac{1}{4}-\frac{n}{4}}.
\end{align*}
On the other hand, noting that 
\begin{align*}
\lim_{t \to \infty} \int_{0}^{\infty} \int_{|y| \geqslant t^{\frac{1}{4}}} \big|f_{0}\big(\psi(\sigma, y)\big) \big| \mathrm{d}y \mathrm{d}\sigma =0, 
\end{align*}
we see that 
\begin{align*}
\| I_{7,2}(t,\cdot) \|_{L^{2}} & \lesssim t^{-\frac{n}{4}}  \int_{0}^{\infty} \int_{|y| \geqslant t^{\frac{1}{4}}} \big|f_{0}\big(\psi(\sigma, y)\big) \big| \mathrm{d}y \mathrm{d}\sigma =o(t^{-\frac{n}{4}})
\end{align*}
as $t \to \infty$.
From the estimate
\begin{align*}
\left| \int_{t^{\frac{1}{2}}}^{\infty} \int_{\mathbb{R}^{n} } f_{0}\big(\psi(\sigma,y)\big) \mathrm{d}y\mathrm{d}\sigma \right| 
& \lesssim \int_{t^{\frac{1}{2}}}^{\infty} \big\| f_{0}\big(\psi(\sigma,\cdot)\big) \big\|_{L^{1}} \mathrm{d}\sigma \\
& \lesssim \int_{t^{\frac{1}{2}}}^{\infty} (1+\sigma)^{-\frac{n}{2}} \mathrm{d}\sigma \lesssim (1+t)^{\frac{1}{2}-\frac{n}{4}},
\end{align*}
we can immediately get 
\begin{align*}
\| I_{8}(t,\cdot) \|_{L^{2}} \lesssim (1+t)^{\frac{1}{2}-\frac{n}{2}}=o(t^{-\frac{n}{4}})
\end{align*}
for $n\geqslant 3$. Finally, by separating $[t^{\frac{1}{2}},t]$ into $[t^{\frac{1}{2}},t/2]$ and $[t/2,t]$ due to $t/2\gg t^{\frac{1}{2}}$ as $t\gg1$, and employing $(L^2\cap L^1)-L^2$ as well as $L^2-L^2$ estimates, we obtain
\begin{align*}
		\| I_{9}(t,\cdot) \|_{L^{2}} & \lesssim \int_{t^{\frac{1}{2}}}^{\frac{t}{2}} (1+t-\sigma)^{-\frac{n}{4}} \big\| f_{0}\big(\psi(\sigma,\cdot)\big) \big\|_{L^{1} \cap L^{2}} \mathrm{d}\sigma 
+ \int_{\frac{t}{2}}^{t} \big\| f_{0}\big(\psi(\sigma,\cdot)\big) \big\|_{L^{2}} \mathrm{d}\sigma \\
& \lesssim (1+t)^{-\frac{n}{4}}  \int_{t^{\frac{1}{2}}}^{\frac{t}{2}} (1+\sigma)^{-\frac{n}{2}} \mathrm{d}\sigma 
+ \int_{\frac{t}{2}}^{t} (1+\sigma)^{-\frac{3n}{4}} \mathrm{d}\sigma \\
& \lesssim (1+t)^{\frac{1}{2}-\frac{n}{2}}=o(t^{-\frac{n}{4}}).
\end{align*}
Summing up all derived estimates in the above, we claim \eqref{eq:72} directly, which is the desired result.
\end{proof}
Secondly, as the direct consequence of the estimates \eqref{eq:43} and \eqref{eq:2.261}, 
we have the second-order profiles and the second-order approximation by diffusion waves of $\partial_{t}^{\ell} K_{2}(t, x) \ast f_{0}(\psi(0,x))$ for $\ell=0,1,2$.
\begin{coro}\label{coro:4.2}
Let $n \geqslant 1$ and $s>(n/2-1)^{+}$.
Suppose that $(\psi_{0}, \psi_{1}) \in (H^{s+2}\cap L^1)\times (H^{s+1}\cap L^1)$ and $|\nabla \psi_{0}|, \psi_{1} \in L^{1}_{1}$.
Then, the following refined estimates hold:
\begin{align} \label{eq:81}
	\left\|\partial_{t}^{\ell} K_{2}(t, \cdot) \ast f_{0}\big(\psi(0,\cdot)\big)+\widetilde{\psi}^{(1, \ell)}(t,\cdot)-\phi^{(2, \ell)}(t,\cdot)
	\right\|_{\dot{H}^{k}}=o\big(\ml{D}_{n,k+\ell+1}(t) \big)
\end{align}
as $t \to \infty$ for $0 \leqslant k \leqslant s+2-\ell$ with $\ell=0,1,2$,
where we choose
\begin{align*}
	\phi^{(2, 0)}(t,x) &:= -\tau M_{00}  t \frac{\delta(4 \tau-\delta)}{8} \Delta J_1  +\tau P_{00} \circ \nabla J_0 -\tau^{2} M_{00} J_1,\\
	\phi^{(2, 1)}(t,x) &:= -\tau M_{00} \delta\left(-\frac{4\tau-\delta}{8}t\Delta+\frac{1}{2}\right) \Delta J_0  +\tau P_{00}  \circ \nabla J_1  +\tau^{2} M_{00} \Delta J_0,\\
	\phi^{(2, 2)}(t,x) & := -\tau M_{00} \delta\left(-\frac{4\tau-\delta}{8}t\Delta+1\right)\Delta J_1 -\tau P_{00}  \circ \nabla\Delta J_0 +\tau^{2} M_{00} \Delta J_1,
\end{align*}
carrying
\begin{align*}
P_{00}:= \int_{\mathbb{R}^{n}} x f_{0} \big(\psi(0,x)\big) \mathrm{d}x.
\end{align*}
\end{coro}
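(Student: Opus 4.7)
The plan is to combine Corollary \ref{coro:2.2} with a second-order Taylor-expansion argument of the same flavor as Propositions \ref{prop:2.6} and \ref{prop:2.7}, applied now to the fixed initial source $g(x) := f_0(\psi(0,x)) = \tfrac{B}{2A}|\psi_1(x)|^2 + |\nabla\psi_0(x)|^2$, which depends only on $\psi_0$ and $\psi_1$. Under the hypothesis $(\psi_0,\psi_1)\in (H^{s+2}\cap L^1)\times(H^{s+1}\cap L^1)$ with $s>(n/2-1)^+$, the Sobolev embedding $H^{s+1}\hookrightarrow L^\infty$ and the fractional Leibniz rule give $g\in \dot{H}^s\cap L^1$; the additional assumption $|\nabla\psi_0|,\psi_1\in L^1_1$ combined with the $L^\infty$-bounds on $\nabla\psi_0,\psi_1$ yields $|x|g\in L^1$, so both $M_{00}$ and $P_{00}$ are finite and well-defined.

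I next apply Corollary \ref{coro:2.2} with $m=1$ and $\psi_2$ replaced by $g$, producing
\[
\bigl\|\partial_t^\ell K_2(t,\cdot)\ast g \,-\, \tau\bigl[\partial_t^\ell \ml{J}_0(t,|D|)+\ml{N}_\ell(t,|D|)\bigr]g\bigr\|_{\dot H^k}\lesssim (1+t)^{-\frac{k+\ell+1}{2}-\frac{n}{4}},
\]
which already sits inside the target $o\bigl(\ml{D}_{n,k+\ell+1}(t)\bigr)$. Hence it remains only to approximate the two Fourier multipliers on the right by the explicit linear combinations of $J_0,J_1$ and their derivatives composing $-\widetilde{\psi}^{(1,\ell)}+\phi^{(2,\ell)}$.

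For this I repeat the Taylor-splitting step of the proof of Proposition \ref{prop:2.7}: for $j=0,1$, write $\ml{J}_j(t,|D|)g(x)=\int_{\mb{R}^n}J_j(t,x-y)g(y)\mathrm{d}y$, cut the $y$-integral at $|y|=t^{\frac{1}{8}}$, Taylor-expand $J_j(t,x-y)$ to order two on the near region, and use the vanishing tail $\int_{|y|\geq t^{1/8}}|y|\,|g(y)|\mathrm{d}y\to 0$ on the far region (for which $g\in L^1_1$ is essential). This yields $\bigl\|\ml{J}_j(t,|D|)g - M_{00}J_j(t,\cdot)-P_{00}\circ\nabla J_j(t,\cdot)\bigr\|_{\dot H^k}=o\bigl(\ml{D}_{n,k+1+j}(t)\bigr)$. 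The same argument applied to the $t\Delta\ml{J}_r(t,|D|)$ and $\Delta\ml{J}_r(t,|D|)$ blocks that compose $\ml{N}_\ell(t,|D|)$ --- at only first-order Taylor, since the extra $|D|^2$ already supplies one order of decay --- produces the $M_{00}$-weighted $t\Delta J_r$ and $\Delta J_r$ contributions entering $\phi^{(2,\ell)}$. A single triangle inequality concatenates the two estimates and delivers \eqref{eq:81}.

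The main technical point is the bookkeeping of the explicit $t$-prefactor inside $\ml{N}_\ell(t,|D|)$: this factor forces the Taylor remainder for the $t\Delta\ml{J}_r$ block to be controlled at one $|\xi|$-order higher than for bare $\ml{J}_j$, and this is exactly what the cutoff $|y|\leq t^{\frac{1}{8}}$ delivers provided $g\in L^1_1$. Beyond this bookkeeping, no ideas outside Corollary \ref{coro:2.2} and the arguments of Propositions \ref{prop:2.6}--\ref{prop:2.7} are required.
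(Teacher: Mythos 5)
Your route is the paper's own: verify that $M_{00}$ and $P_{00}$ are finite, use Corollary \ref{coro:2.2} to replace $\partial_t^{\ell}K_2(t,\cdot)\ast f_0\big(\psi(0,\cdot)\big)$ by explicit Fourier multipliers acting on $g=f_0\big(\psi(0,\cdot)\big)$, and then run the moment/Taylor-splitting argument of Propositions \ref{prop:2.6}--\ref{prop:2.7} (cut at $|y|=t^{1/8}$, using $g\in L^1_1$) to pass to $M_{00}$- and $P_{00}$-weighted diffusion-wave kernels; this is exactly what the paper's short proof prescribes.

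There is, however, one concrete gap in the bookkeeping. From Corollary \ref{coro:2.2} with $m=1$ you retain only the multipliers $\tau\big(\partial_t^{\ell}\ml{J}_0(t,|D|)+\ml{N}_{\ell}(t,|D|)\big)$, and their zeroth- and first-moment approximations reproduce the $\tau M_{00}$-, $\tau P_{00}\circ\nabla$- and $\delta$-weighted (including $t\Delta$) blocks of $-\widetilde{\psi}^{(1,\ell)}+\phi^{(2,\ell)}$, but they cannot produce the $\tau^{2}M_{00}$-blocks, i.e. $-\tau^{2}M_{00}J_1$, $\tau^{2}M_{00}\Delta J_0$, $\tau^{2}M_{00}\Delta J_1$. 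Those blocks originate from the cosine part of the small-frequency expansion of $\widehat{K}_2$, that is from the contribution $\partial_t^{\ell}\big(\ml{J}_1(t,|D|)\Psi_{0,2}\big)$ with $\Psi_{0,2}=-\tau^{2}\psi_2$ when the only datum sits in the third slot (see \eqref{Represent_01}, \eqref{eq:2.45}, or Theorem \ref{Thm_Second_Order_Prof} specialized to data $(0,0,g)$), and in $\dot{H}^k$ they are of size exactly $\ml{D}_{n,k+\ell+1}(t)$, so they cannot be hidden in the $o\big(\ml{D}_{n,k+\ell+1}(t)\big)$ remainder: as written, your final triangle inequality leaves them unmatched and the claimed estimate does not close. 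The fix is to carry the full second-order profile of $\partial_t^{\ell}K_2$ (the $\tau\ml{J}_0$-type, $\tau\ml{N}_{\ell}$-type and $\tau^{2}\ml{J}_1$-type pieces together) and approximate the extra cosine block by its zeroth moment via \eqref{eq:2.261}, exactly as the $D_8$-term is handled in the proof of Proposition \ref{prop:2.7}; this is what the paper's instruction to repeat that proof implicitly supplies. To be fair, the omission is invited by the literal statement of \eqref{eq:43}, whose $m=1$ profile also suppresses the $\tau^{2}$-term, but the presence of the $\tau^{2}M_{00}$-terms in the target $\phi^{(2,\ell)}$ makes the additional block unavoidable.
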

\begin{proof}
By the assumption of initial datum, we easily see that 
\begin{align*}
|P_{00}| \lesssim \| \psi_{1} \|_{L^{\infty}} \| x \psi_{1} \|_{L^{1}} +\| \,|D| \psi_{0} \|_{L^{\infty}} \| x |D| \psi_{0} \|_{L^{1}}< \infty.
\end{align*}
Therefore, we can apply the same argument as those in the proof of Proposition \ref{prop:2.7} to have the estimate \eqref{eq:81}.
\end{proof}
We can now formulate the second-order approximation by diffusion-waves of the global (in time) small data solutions to \eqref{JMGT_Dissipative} and the lower bound for the norms of the first-order approximation.
\begin{theorem}\label{thm:4.2}
Let us take the same assumption as those in Theorem \ref{Thm_GESDS} with $n \geqslant 3$.
In addition, assume that $|\nabla \psi_{0}|, \psi_{1} \in L^{1}_{1}$.
\begin{description}
	\item[(i)] Then, the refined estimates hold
	\begin{align*}
	\left\|\partial_{t}^{\ell} \psi(t,\cdot)-\sum_{j=1,2} \big(\psi^{(j, \ell)}(t,\cdot) +\widetilde{\psi}^{(j, \ell)}(t,\cdot)\big)
	\right\|_{\dot{H}^{k}}=o\big(\ml{D}_{n,k+\ell+1}(t) \big)
\end{align*}
as $t\gg 1$ for $0 \leqslant k \leqslant s+2-\ell$ with $\ell=0,1,2$,
where 
\begin{align*}
	\widetilde{\psi}^{(2, 0)}(t,x) &:= -\tau M_{\non} J_{1} -\phi^{(2,0)},\\
	\widetilde{\psi}^{(2, 1)}(t,x) &:= -\tau M_{\non} \Delta J_{0} -\phi^{(2,1)},\\
	\widetilde{\psi}^{(2, 2)}(t,x) & := -\tau M_{\non} \Delta J_{1} -\phi^{(2,2)}.
\end{align*}
	\item[(ii)] Assume that $\widetilde{A}_{0}:=(M_{1} +\tau M_{2}+\tau M_{00})\frac{\tau(4 \tau-\delta)}{8} \neq 0$ or $\widetilde{A}_{1}:= M_{0} -\tau^{2} M_{2}+\tau^{2} M_{00}-\tau M_{\non} \neq 0$ or $\widetilde{\mathbb{B}}:= P_{1} +\tau P_{2}-\tau P_{00} \neq  0$.
		Then, the optimal estimates hold 
		\begin{align*}
			\widetilde{C} \ml{D}_{n,k+1+\ell}(t) \lesssim \| \partial_{t}^{\ell} \psi(t,\cdot)-\psi^{(1, \ell)}(t,\cdot) -\widetilde{\psi}^{(1, \ell)}(t,\cdot) \|_{\dot{H}^{k}} \lesssim \ml{D}_{n,k+1+\ell}(t) \|(\psi_0 ,\psi_1 ,\psi_2 )\|_{\ml{A}_k}
		\end{align*}
		as $t\gg 1$ for $0 \leqslant k \leqslant s+2-\ell$ with $\ell=0,1,2$, where $\widetilde{C}=\widetilde{C}(\widetilde{A}_{0},\widetilde{A}_{1},\widetilde{\mathbb{B}})$ is a positive constant.
\end{description}
\end{theorem}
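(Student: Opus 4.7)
The plan is to decompose $\psi=\psi^{\lin}+\psi^{\non}$ as in \eqref{*N4} and then assemble the already-established second-order asymptotics of each piece; both parts of the theorem will follow from the triangle inequality together with the optimal-estimate methodology summarised at the beginning of Section~\ref{Sec_Prof}.

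For part (i), Proposition~\ref{prop:2.7} applied to the linear component $\psi^{\lin}$ immediately supplies
\begin{align*}
\|\partial_t^{\ell}\psi^{\lin}(t,\cdot)-\psi^{(1,\ell)}(t,\cdot)-\psi^{(2,\ell)}(t,\cdot)\|_{\dot{H}^k}=o\big(\ml{D}_{n,k+\ell+1}(t)\big).
\end{align*}
For the nonlinear component I would use representation \eqref{eq:29} (when $\ell=0,1$) or \eqref{eq:30} (when $\ell=2$). Corollary~\ref{coro:4.2} approximates $(\partial_t^{\ell}K_2)(t,\cdot)\ast f_0(\psi(0,\cdot))$ by $-\widetilde{\psi}^{(1,\ell)}+\phi^{(2,\ell)}$ with error $o(\ml{D}_{n,k+\ell+1}(t))$, while Proposition~\ref{prop:4.2} identifies the first-order large-time profile of $\int_0^t\partial_t^{\ell+1}K_2(t-\sigma,\cdot)\ast f_0(\psi(\sigma,\cdot))\,\mathrm{d}\sigma$ with the appropriate multiple of $J_1,\Delta J_0$ or $\Delta J_1$ weighted by $\tau M_{\non}$. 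Plugging these into \eqref{eq:29}-\eqref{eq:30} and tracking signs yields
\begin{align*}
\partial_t^{\ell}\psi^{\non}(t,\cdot)=\widetilde{\psi}^{(1,\ell)}(t,\cdot)+\widetilde{\psi}^{(2,\ell)}(t,\cdot)+o\big(\ml{D}_{n,k+\ell+1}(t)\big).
\end{align*}
For $\ell=2$ there is an extra instantaneous term $f_0(\psi(t,\cdot))$ coming from \eqref{eq:30}; interpolating \eqref{Est_03} and \eqref{eq:3.1} gives $\|f_0(\psi(t,\cdot))\|_{\dot{H}^k}\lesssim(1+t)^{-\frac{k}{2}-\frac{3n}{4}}$, which is $o(\ml{D}_{n,k+3}(t))$ precisely because $n\geqslant 3$. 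Summing the linear and nonlinear expansions completes part (i).

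Part (ii) follows by another triangle inequality around the second-order profile. The upper bound is obtained by direct estimation of $\psi^{(2,\ell)}+\widetilde{\psi}^{(2,\ell)}$ using
\begin{align*}
t\|\Delta J_1(t,\cdot)\|_{\dot{H}^k}+\|\nabla J_0(t,\cdot)\|_{\dot{H}^k}+\|J_1(t,\cdot)\|_{\dot{H}^k}\lesssim(1+t)^{-\frac{k}{2}-\frac{n}{4}},
\end{align*}
together with the observation that $|M_{00}|,|P_{00}|$ are controlled by $\|(\psi_0,\psi_1,\psi_2)\|_{\ml{A}_k}$ and that $|M_{\non}|<\infty$ by \eqref{Est_02}, which is integrable on $(0,\infty)$ exactly when $n\geqslant 3$. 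For the lower bound I would employ
\begin{align*}
\|\partial_t^{\ell}\psi-\psi^{(1,\ell)}-\widetilde{\psi}^{(1,\ell)}\|_{\dot{H}^k}\geqslant\|\chi_{\intt}(D)(\psi^{(2,\ell)}+\widetilde{\psi}^{(2,\ell)})\|_{\dot{H}^k}-\|\chi_{\intt}(D)(\mbox{error from (i)})\|_{\dot{H}^k},
\end{align*}
where the subtracted term is $o(\ml{D}_{n,k+1+\ell}(t))$ by part (i). The leading term is evaluated through the Riemann--Lebesgue lemma combined with polar coordinates, following the proof of Proposition~\ref{prop:2.8} line by line: the Fourier expression for $\psi^{(2,\ell)}+\widetilde{\psi}^{(2,\ell)}$ has the same shape as in the linear case but with $A_0,A_1,\mathbb{B}$ replaced by $\widetilde{A}_0,\widetilde{A}_1,\widetilde{\mathbb{B}}$. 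Consequently the oscillatory pieces vanish as $t\to\infty$ while the non-oscillatory pieces deliver the required $\widetilde{C}\ml{D}_{n,k+1+\ell}(t)$ lower bound whenever at least one of $\widetilde{A}_0,\widetilde{A}_1,\widetilde{\mathbb{B}}$ is nonzero.

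The main obstacle will be the bookkeeping that the three sources of second-order contributions (the linear profile $\psi^{(2,\ell)}$, the data-tail profile $\phi^{(2,\ell)}$ from Corollary~\ref{coro:4.2}, and the dynamically-generated mass $M_{\non}$ from Proposition~\ref{prop:4.2}) combine correctly, after accounting for the signs in \eqref{eq:29}-\eqref{eq:30}, to produce precisely the coefficients $\widetilde{A}_0,\widetilde{A}_1,\widetilde{\mathbb{B}}$ in the statement. A secondary delicate point is the lower-order control of the extra pointwise term in the $\ell=2$ representation \eqref{eq:30}, which is exactly what forces the restriction $n\geqslant 3$ rather than the weaker $n\geqslant 2$ of Theorem~\ref{thm:4.1}.
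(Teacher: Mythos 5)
Your proposal is correct and takes essentially the same route as the paper, whose proof of this theorem is just one line: combine Proposition \ref{prop:4.2} and Corollary \ref{coro:4.2} with the linear second-order approximation and repeat the proof of Theorem \ref{thm:2.3} (Proposition \ref{prop:2.8}'s Riemann--Lebesgue lower-bound argument) with $A_0$, $A_1$, $\mathbb{B}$ replaced by $\widetilde{A}_{0}$, $\widetilde{A}_{1}$, $\widetilde{\mathbb{B}}$. Your additional details---notably the interpolation bound showing the instantaneous term $f_{0}\big(\psi(t,\cdot)\big)$ in \eqref{eq:30} is $o\big(\ml{D}_{n,k+3}(t)\big)$ exactly because $n\geqslant 3$---simply fill in steps the paper omits.
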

Once we have Proposition \ref{prop:4.2} and Corollary \ref{coro:4.2}, one may simply follow the proof of Theorem \ref{thm:2.3} with $A_{0}$, $A_{1}$ and $\mathbb{B}$ just replaced by $\widetilde{A}_{0}$, $\widetilde{A}_{1}$ and $\widetilde{\mathbb{B}}$, to obtain Theorem \ref{thm:4.2}.   
So, we omit the detail.

\section{Final remarks: Further asymptotic property for the MGT equation}\label{Section_Final_Remark}
Throughout the paper, we focused on the large-time convergences and profiles for the (Jordan-)MGT models in the viscous case. Due to the modeling that acoustic waves are originated from the Navier-Stokes equations coupled with either Fourier's law \eqref{Fourier-law} or Cattaneo's law \eqref{Cattaneo-law}, we will give some comments for the singular limits with respect to the thermal relaxation for the MGT equation \eqref{Eq_MGT}$_1$ and the linearized Kuznetsov's equation \eqref{Lienar_Kuznetsov}$_1$ under some datum assumptions.

Let us consider the inhomogeneous MGT equation in the Fourier space
\begin{align}\label{inhomo_Fourier}
	\begin{cases}
		\tau\hat{u}_{ttt}+\hat{u}_{tt}+|\xi|^2\hat{u}+(\delta+\tau)|\xi|^2\hat{u}_t=\hat{f}_{h},&\xi\in\mb{R}^n,\ t>0,\\
		\hat{u}(0,\xi)=\hat{u}_t(0,\xi)=\hat{u}_{tt}(0,\xi)=0,&\xi\in\mb{R}^n,
	\end{cases}
\end{align}
with $0<\tau\ll 1$. With the aid of energy method in the Fourier space associated with three equalities
\begin{align*}
	\frac{1}{2}\frac{\mathrm{d}}{\mathrm{d}t}\left(\left|\frac{1}{2}\hat{u}_t+\tau\hat{u}_{tt}\right|^2\right)&=-\frac{\tau}{2}|\hat{u}_{tt}|^2-\frac{\delta+\tau}{2}|\xi|^2|\hat{u}_t|^2-\frac{1}{4}\Re(\hat{u}_{tt}\bar{\hat{u}}_t)-\frac{1}{2}|\xi|^2\Re(\hat{u}\bar{\hat{u}}_t)\\
	&\quad-\tau(\delta+\tau)|\xi|^2\Re(\hat{u}_t\bar{\hat{u}}_{tt})-\tau|\xi|^2\Re(\hat{u}\bar{\hat{u}}_{tt})+\Re\left(\hat{f}_h\left(\frac{1}{2}\bar{\hat{u}}_t+\tau\bar{\hat{u}}_{tt}\right)\right),
\end{align*}
moreover,
\begin{align*}
	\frac{1}{2}\frac{\mathrm{d}}{\mathrm{d}t}\left(\frac{\tau}{\delta+\tau}|\xi|^2|\hat{u}+(\delta+\tau)\hat{u}_t|^2\right)&=\frac{\tau}{\delta+\tau}|\xi|^2\Re(\hat{u}_t\bar{\hat{u}})+\tau|\xi|^2|\hat{u}_t|^2+\tau|\xi|^2\Re(\hat{u}_{tt}\bar{\hat{u}})\\
	&\quad+\tau(\delta+\tau)|\xi|^2\Re(\hat{u}_{tt}\bar{\hat{u}}_t),
\end{align*}
and
\begin{align*}
	\frac{1}{2}\frac{\mathrm{d}}{\mathrm{d}t}\left(\frac{\delta-\tau}{2(\delta+\tau)}|\xi|^2|\hat{u}|^2+\frac{1}{4}|\hat{u}_t|^2\right)=\frac{\delta-\tau}{2(\delta+\tau)}|\xi|^2\Re(\hat{u}_t\bar{\hat{u}})+\frac{1}{4}\Re(\hat{u}_{tt}\bar{\hat{u}}_t),
\end{align*}
we are able to claim (only leaving the term with $|\hat{u}|^2$)
\begin{align}\label{Pointwise}
	\frac{\delta-\tau}{2(\delta+\tau)}|\xi|^2|\hat{u}(t,\xi)|^2\leqslant\frac{1+\tau(\delta-\tau)|\xi|^2}{8(\delta-\tau)|\xi|^2}\int_0^t|\hat{f}_h(s,\xi)|^2\mathrm{d}s.
\end{align}

Let us take $\hat{u}:=\widehat{\psi}-\widehat{\varphi}$ standing for the difference of solutions to the MGT equation \eqref{Eq_MGT} and Kuznetsov's equation \eqref{Lienar_Kuznetsov}$_1$ with
\begin{align}\label{Initial_Datum}
	\varphi(0,x)=\psi_0(x),\ \ \varphi_t(0,x)=\psi_1(x)\ \ \mbox{such that}\ \ \psi_2(x)=\Delta\psi_0(x)+\delta\Delta\psi_1(x).
\end{align}
So, we notice $f_{h}(t,x)=-\delta\tau\Delta\varphi_{tt}(t,x)$ in \eqref{inhomo_Fourier}. From the derived pointwise estimate \eqref{Pointwise}, one has
\begin{align*}
	|\widehat{\psi}(t,\xi)-\widehat{\varphi}(t,\xi)|^2=|\hat{u}(t,\xi)|^2\leqslant \tau^2 C(1+\tau|\xi|^2)\int_0^t|\widehat{\varphi}_{tt}(s,\xi)|^2\mathrm{d}s.
\end{align*}
Due to the fact that $\widehat{\varphi}_{tt}=-|\xi|^2(\widehat{\varphi}+\delta\widehat{\varphi}_t)$ of Kuznetsov's equation, and \cite[Lemma 2.4]{Ikehata-Natsume=2012} that
\begin{align*}
	|\xi|^2|\widehat{\varphi}(s,\xi)|+|\xi|\,|\widehat{\varphi}_t(s,\xi)|\leqslant C\mathrm{e}^{-c|\xi|^2\langle\xi\rangle^{-2}s}\left(|\xi|^2|\widehat{\psi}_0(\xi)|+|\xi|\,|\widehat{\psi}_1(\xi)|\right),
\end{align*}
we may obtain
\begin{align*}
	\|\widehat{\psi}(t,\cdot)-\widehat{\varphi}(t,\cdot)\|_{L^2}^2&\leqslant \tau^2 C\left(\int_{|\xi|\leqslant \varepsilon_0}|\xi|^4+\int_{|\xi|\geqslant \varepsilon_0}\langle\xi\rangle^6\right)\int_0^t\left(|\widehat{\varphi}_{t}(s,\xi)|^2+|\widehat{\varphi}(s,\xi)|^2\right) \mathrm{d}s \mathrm{d}\xi\\
	&\leqslant \tau^2 C\int_0^t(1+s)^{-1-\frac{n}{2}}\mathrm{d}s\left(\|\psi_0\|_{L^1}^2+\|\psi_1\|_{L^1}^2\right)+\tau^2 C\left(\|\psi_0\|_{H^4}^2+\|\psi_1\|_{H^3}^2\right)\\
	&\leqslant \tau C\left(\|\psi_0\|_{H^4\cap L^1}+\|\psi_1\|_{H^3\cap L^1}\right),
\end{align*}
where $C>0$ is independent of $\tau$. 

Finally, Parseval's formula shows
\begin{align}\label{Conv_1}	\sup\limits_{t\in[0,\infty)}\left\|\psi(t,\cdot)-\varphi(t,\cdot)\right\|_{L^2}\leqslant \tau C\left(\|\psi_0\|_{H^4\cap L^1}+\|\psi_1\|_{H^3\cap L^1}\right)
\end{align}
holding for $0<\tau\ll 1$ and any $n\geqslant 1$, which implies linear rate $\tau$ of the convergence in $L^{\infty}([0,\infty),L^2)$ between the MGT equation and the linearized Kuznetsov's equation if the condition \eqref{Initial_Datum} is valid. By the same way, we also can prove
\begin{align}\label{Conv_2}
	\sup\limits_{t\in[0,\infty)}\left\|\psi(t,\cdot)-\varphi(t,\cdot)\right\|_{L^{\infty}}\leqslant \tau C\left(\|\langle D\rangle^{s_0+2}\psi_0\|_{ L^1}+\|\langle D\rangle^{s_0+1}\psi_1\|_{ L^1}\right)
\end{align}
holding for $0<\tau\ll 1$  and any $n\geqslant 1$ with $s_0>n+2$. Together with \eqref{Conv_1} and \eqref{Conv_2}, it yields
\begin{align}\label{Conv_3}
	\psi(t,x)\to\varphi(t,x)\ \ \mbox{in}\ \ L^{\infty}([0,\infty),L^p)\ \ \mbox{for}\ \ 2\leqslant p\leqslant\infty
\end{align}
as $\tau\downarrow 0$, with the convergence rate $\tau$. This result completes the unknown case of strong convergence for $n=1,2$ in \cite[Theorem 4.1]{Chen-Ikehata=2021}. Furthermore, our result \eqref{Conv_3} holds for any $t>0$, namely, global (in time) convergence. However, the corresponding result for the JMGT equation \eqref{JMGT_Dissipative} in the framework of $H^s$ with suitable $s\geqslant0$ is still an open question since the disparity between weakly quasilinear evolution equation \eqref{Kuz-Eq} and semilinear hyperbolic equation \eqref{JMGT-ALL}.

\appendix
\section{Tools from Harmonic Analysis}\label{Appendix}
In this appendix, we will introduce interpolation theorem from Harmonic Analysis, which have been used in Section \ref{Section_GESDS_JMGT} to deal with the nonlinear terms.
\begin{prop}\label{fractionalgagliardonirenbergineq} (Fractional Gagliardo-Nirenberg inequality, \cite{Hajaiej-Molinet-Ozawa-Wang-2011})
	Let $p,p_0,p_1\in(1,\infty)$ and $\kappa\in[0,s)$ with $s\in(0,\infty)$. Then, for all $f\in L^{p_0}\cap \dot{H}^{s}_{p_1}$ the following inequality holds:
	\begin{align*}
		\|f\|_{\dot{H}^{\kappa}_{p}}\lesssim\|f\|_{L^{p_0}}^{1-\beta}\|f\|^{\beta}_{\dot{H}^{s}_{p_1}},
	\end{align*}
	where  $\beta=\beta_{\kappa,s}=\left(\frac{1}{p_0}-\frac{1}{p}+\frac{\kappa}{n}\right)\big/\left(\frac{1}{p_0}-\frac{1}{p_1}+\frac{s}{n}\right)$ and $ \beta\in\left[\frac{\kappa}{s},1\right]$.
\end{prop}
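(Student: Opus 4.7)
The plan is to prove the inequality by a Littlewood-Paley high-low frequency decomposition followed by optimization over a cutoff scale, which is the standard route for fractional Gagliardo-Nirenberg estimates when $\kappa$ and $s$ are not integers.

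First, I would note the scaling consistency of the statement: the stated exponent satisfies
\begin{align*}
\frac{\kappa}{n}-\frac{1}{p}=\beta\Bigl(\frac{s}{n}-\frac{1}{p_1}\Bigr)+(1-\beta)\Bigl(-\frac{1}{p_0}\Bigr),
\end{align*}
which is precisely the scaling balance one expects, and which makes $\beta$ uniquely determined. I would then recall the equivalent norm $\|f\|_{\dot{H}^\kappa_p}\sim\|(\sum_{j}2^{2j\kappa}|\Delta_j f|^2)^{1/2}\|_{L^p}$ using Littlewood-Paley projectors $\{\Delta_j\}_{j\in\mathbb{Z}}$, together with Bernstein's inequalities on each dyadic block localized at frequency $\sim 2^j$, namely $\|\Delta_j f\|_{L^r}\lesssim 2^{jn(1/q-1/r)}\|\Delta_j f\|_{L^q}$ for $1\le q\le r\le\infty$, and the identification $\|\Delta_j f\|_{\dot{H}^\alpha_r}\sim 2^{j\alpha}\|\Delta_j f\|_{L^r}$ for $\alpha\ge 0$.

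The second step is to split $f=P_{\le N}f+P_{>N}f$ at a threshold $N>0$ to be chosen, and estimate each piece separately. Blockwise Bernstein yields for the low-frequency part
\begin{align*}
\|P_{\le N}f\|_{\dot{H}^\kappa_p}\lesssim N^\kappa\|P_{\le N}f\|_{L^p}\lesssim N^{\kappa+n(1/p_0-1/p)}\|f\|_{L^{p_0}},
\end{align*}
while for the high-frequency part, combining the smoothing action with Bernstein,
\begin{align*}
\|P_{>N}f\|_{\dot{H}^\kappa_p}\lesssim N^{\kappa-s}\|P_{>N}f\|_{\dot{H}^s_p}\lesssim N^{\kappa-s+n(1/p_1-1/p)}\|f\|_{\dot{H}^s_{p_1}}.
\end{align*}
Balancing the two upper bounds by choosing $N$ such that
\begin{align*}
N^{s+n(1/p_0-1/p_1)}\sim\frac{\|f\|_{\dot{H}^s_{p_1}}}{\|f\|_{L^{p_0}}}
\end{align*}
and substituting back produces a single estimate of the form $\|f\|_{L^{p_0}}^{1-\beta}\|f\|_{\dot{H}^s_{p_1}}^\beta$; an elementary computation of exponents confirms that $\beta$ matches the formula in the statement.

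The principal obstacle is controlling the admissible ranges of the parameters. The two Bernstein steps impose ordering conditions like $p\ge p_0$ and $p\ge p_1$ in the cleanest form, and one has to verify that the stated window $\beta\in[\kappa/s,1]$ is exactly the scaling-type restriction under which the high-low splitting is nondegenerate: $\beta\le 1$ is equivalent to $1/p_1-1/p\le(s-\kappa)/n$, while $\beta\ge\kappa/s$ ensures the low-frequency contribution dominates in the correct regime. The endpoint cases $\beta=\kappa/s$ and $\beta=1$ typically require separate treatment: for $\beta=1$ one invokes the direct Sobolev embedding $\dot{H}^s_{p_1}\hookrightarrow\dot{H}^\kappa_p$, while for $\beta=\kappa/s$ one appeals to the classical integer-order Gagliardo-Nirenberg inequality and a fractional interpolation identity, following the approach of \cite{Hajaiej-Molinet-Ozawa-Wang-2011}.
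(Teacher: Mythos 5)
The paper offers no proof of this proposition: it is imported verbatim from \cite{Hajaiej-Molinet-Ozawa-Wang-2011} as an appendix tool, so your attempt can only be measured against the standard literature argument. Your high--low Littlewood--Paley splitting with blockwise Bernstein estimates and optimization of the cutoff $N$ is indeed the classical route, and it does produce the inequality with the stated exponent $\beta$ in the core regime $\kappa/s<\beta<1$ \emph{provided} the ordering conditions $p\geqslant p_0$ and $p\geqslant p_1$ hold, since Bernstein on a dyadic block only moves Lebesgue exponents upwards.

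The genuine gap is your assertion that the window $\beta\in[\kappa/s,1]$ is ``exactly'' the restriction under which this splitting works. It is not: the scaling relation gives $\tfrac1p=\tfrac{1-\beta}{p_0}+\tfrac{\beta}{p_1}+\tfrac{\kappa-\beta s}{n}$, and inside the admissible window one can perfectly well have $1/p>1/p_1$ --- for instance $\kappa>0$, $p_0<p_1$ and $\beta$ slightly above $\kappa/s$ give $\tfrac1p-\tfrac1{p_1}\approx(1-\tfrac\kappa s)(\tfrac1{p_0}-\tfrac1{p_1})>0$ --- in which case your high-frequency Bernstein step runs in the forbidden direction; a symmetric failure ($p<p_0$) occurs for $p_1$ much smaller than $p_0$ with $\beta$ near $1$. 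In those configurations the proposition is still true, but it requires the more elaborate argument of \cite{Hajaiej-Molinet-Ozawa-Wang-2011} (or an additional duality/interpolation step), not the two displayed Bernstein bounds. Moreover the endpoint $\beta=\kappa/s$ is not proved but deferred to the very reference being quoted, which is circular if the goal is a self-contained proof. None of this matters for the paper itself, which never proves the proposition and applies it only with $p=p_0=p_1=2$, where your sketch is sound; but as a proof of the statement in its full generality the proposal is incomplete.
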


\begin{prop}\label{fractionleibnizrule} (Fractional Leibniz rule, \cite{Grafakos-Oh-2014})
	Let $s\in(0,\infty)$, $r\in[1,\infty]$, $p_1,p_2,q_1,q_2\in(1,\infty]$ satisfy the relation $\frac{1}{r}=\frac{1}{p_1}+\frac{1}{p_2}=\frac{1}{q_1}+\frac{1}{q_2}$. Then, for all $f\in\dot{H}^{s}_{p_1}\cap L^{q_1}$ and $g\in\dot{H}^{s}_{q_2}\cap L^{q_2}$
	the following inequality holds:
	\begin{align*}
			\|fg\|_{\dot{H}^{s}_{r}}\lesssim \|f\|_{\dot{H}^{s}_{p_1}}\|g\|_{L^{p_2}}+\|f\|_{L^{q_1}}\|g\|_{\dot{H}^{s}_{q_2}}.
	\end{align*}
\end{prop}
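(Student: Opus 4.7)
The plan is to prove the fractional Leibniz rule via the Littlewood--Paley decomposition and Bony's paraproduct calculus. Let $\Delta_j$ denote the standard dyadic Fourier projection onto the annulus $\{|\xi|\sim 2^j\}$, and $S_j := \sum_{k\leqslant j-3}\Delta_k$ the corresponding low-pass filter. Decompose the product according to Bony:
\begin{align*}
fg = T_f g + T_g f + R(f,g), \qquad T_f g := \sum_{j} S_{j-3} f \cdot \Delta_j g,
\end{align*}
where the remainder $R(f,g) := \sum_{|j-k|\leqslant 2}\Delta_j f\cdot \Delta_k g$ collects the high--high frequency interactions.

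First I would handle the paraproduct $T_f g$. Since the Fourier support of each summand $S_{j-3} f\cdot \Delta_j g$ lies in an annulus of size $\sim 2^j$, only $\Delta_{j'}$ with $j'\sim j$ detects it. Combining the Littlewood--Paley characterization $\|h\|_{\dot{H}^s_r}\sim \|(\sum_j 2^{2js}|\Delta_j h|^2)^{1/2}\|_{L^r}$ with the pointwise bound $|S_{j-3}f|\lesssim Mf$ for the Hardy--Littlewood maximal function, Hölder's inequality in $1/r=1/q_1+1/q_2$, and the Fefferman--Stein vector-valued maximal inequality, I obtain
\begin{align*}
\|T_f g\|_{\dot{H}^s_r} \lesssim \|f\|_{L^{q_1}} \|g\|_{\dot{H}^s_{q_2}}.
\end{align*}
Symmetrically, $\|T_g f\|_{\dot{H}^s_r}\lesssim \|f\|_{\dot{H}^s_{p_1}}\|g\|_{L^{p_2}}$, which accounts for the first term on the right-hand side.

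The remainder $R(f,g)$ is the more delicate piece. Because the Fourier support of $\Delta_j f\cdot \Delta_k g$ with $|j-k|\leqslant 2$ lies in a \emph{ball} $\{|\xi|\lesssim 2^j\}$ rather than an annulus, many scales can contribute to a single $\Delta_{j'}R(f,g)$. Here the hypothesis $s>0$ is essential: after redistributing $2^{j's}\lesssim 2^{js}$ on the diagonal, a geometric series in $j'-j$ closes, and the resulting bound reduces to placing the derivative either on $f$ or on $g$ and invoking the same square-function plus maximal-function argument as above, yielding both terms of the stated inequality.

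The main anticipated obstacle is the endpoint case in which one of $p_i,q_i$ equals $\infty$, where the square-function characterization is inadequate and one must replace it by $L^\infty(\ell^2)$ or BMO substitutes; this is precisely the refinement of \cite{Grafakos-Oh-2014} over the classical Kato--Ponce estimate, and I would cite their commutator-type bound at that stage rather than redo it. The restriction $p_i,q_i>1$ is what makes the Hardy--Littlewood and Fefferman--Stein steps applicable, while $s>0$ is exactly what prevents the logarithmic failure of the remainder estimate.
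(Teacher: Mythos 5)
The paper does not prove this proposition at all: it is quoted verbatim as a known tool from Harmonic Analysis and attributed to Grafakos--Oh, so there is no internal argument to compare yours against. Judged on its own, your sketch is the standard Bony paraproduct route and is essentially sound in the non-endpoint range: the two paraproducts are handled exactly as you say (frequency localization in an annulus, $|S_{j-3}f|\lesssim Mf$, H\"older, Fefferman--Stein), and for the high--high remainder the point you isolate -- ball rather than annulus support, so that $s>0$ is needed to sum the geometric tail $2^{(j'-j)s}$ -- is the correct crux. One caveat you should make explicit: the endpoint obstructions are not only ``one of $p_i,q_i$ equals $\infty$.'' The stated range $r\in[1,\infty]$ includes $r=1$ (e.g. $p_1=p_2=2$), where the square-function characterization $\dot H^s_r\simeq\dot F^s_{r,2}$ and the vector-valued maximal inequality both fail, so your argument as written only covers $1<r<\infty$ with finite $\dot H^s$-exponents; the cases $r=1$ and $r=\infty$ are precisely where the extra machinery of the cited reference (and of the endpoint literature following it) is needed, and deferring to it there is legitimate but should be stated as covering those cases too, not only the $L^\infty$ factors. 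Since the paper only ever applies the proposition with $r=2$ and one factor measured in $L^\infty$, your sketch more than suffices for the way the result is actually used.
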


\begin{prop}
	Let $s >n/2$.
	Then the following estimate holds:
	\begin{align}\label{eq:C1}
		\| f \|_{L^{\infty}} \lesssim \| f \|_{L^{2}}^{1-\frac{n}{2s}} \|f\|_{\dot{H}^{s}}^{\frac{n}{2s}}.
	\end{align}
	In particular, when $n=1$, it holds that 
	\begin{align} \label{eq:C2}
		\| f \|_{L^{\infty}} \lesssim \| f \|_{L^{2}}^{\frac{1}{2}} \|f\|_{\dot{H}^{1}}^{\frac{1}{2}}.
	\end{align}
\end{prop}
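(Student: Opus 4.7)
The plan is a frequency-splitting argument in Fourier space. Since $s > n/2$, the weight $|\xi|^{-2s}$ is integrable at infinity on $\mathbb{R}^{n}$, so whenever $f \in L^{2} \cap \dot{H}^{s}$ the Cauchy-Schwarz inequality (applied as in the main step below) will force $\widehat{f} \in L^{1}$; Fourier inversion then yields the uniform pointwise bound $\|f\|_{L^{\infty}} \lesssim \|\widehat{f}\|_{L^{1}}$, so it suffices to estimate $\|\widehat{f}\|_{L^{1}}$.

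Next, for any parameter $R > 0$, I would split
\begin{align*}
\|\widehat{f}\|_{L^{1}} = \int_{|\xi| \leqslant R} |\widehat{f}(\xi)| \, \mathrm{d}\xi + \int_{|\xi| \geqslant R} |\xi|^{-s} \bigl(|\xi|^{s} |\widehat{f}(\xi)|\bigr) \mathrm{d}\xi
\end{align*}
and apply Cauchy-Schwarz to each piece. Combined with Plancherel's identity, the first integral contributes $\lesssim R^{n/2} \|f\|_{L^{2}}$, while the second contributes
\begin{align*}
\left( \int_{|\xi| \geqslant R} |\xi|^{-2s} \, \mathrm{d}\xi \right)^{1/2} \|f\|_{\dot{H}^{s}} \lesssim R^{(n-2s)/2} \|f\|_{\dot{H}^{s}},
\end{align*}
where the hypothesis $s > n/2$ is precisely what is required for the tail integral to converge. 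Putting these together gives, for every $R > 0$,
\begin{align*}
\|f\|_{L^{\infty}} \lesssim R^{n/2} \|f\|_{L^{2}} + R^{n/2 - s} \|f\|_{\dot{H}^{s}}.
\end{align*}

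The final step is to optimize in $R$. Choosing $R = (\|f\|_{\dot{H}^{s}} / \|f\|_{L^{2}})^{1/s}$ (assuming both norms are nonzero, otherwise the inequality is trivial) balances the two terms on the right-hand side and produces the desired estimate $\|f\|_{L^{\infty}} \lesssim \|f\|_{L^{2}}^{1 - n/(2s)} \|f\|_{\dot{H}^{s}}^{n/(2s)}$. The second assertion is the specialization $n = 1$, $s = 1$ (which satisfies $s > n/2$); alternatively, in that one-dimensional case one may argue directly by noting $|f(x)|^{2} = 2 \int_{-\infty}^{x} f(y) f'(y) \, \mathrm{d}y$ and applying Cauchy-Schwarz. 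The argument is entirely standard and I do not anticipate any real obstacle; the only delicate point to acknowledge is that the restriction $s > n/2$ is sharp, entering precisely through the convergence of $\int_{|\xi| \geqslant R} |\xi|^{-2s} \, \mathrm{d}\xi$, which is also why the embedding fails at the critical threshold $s = n/2$.
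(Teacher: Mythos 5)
Your argument is correct and complete. There is in fact no in-paper proof to compare with: the paper states this proposition in its appendix without proof, treating it as a standard tool (and it does not follow directly from the fractional Gagliardo--Nirenberg inequality recorded in Proposition \ref{fractionalgagliardonirenbergineq}, since that statement requires the target Lebesgue exponent to be finite). Your frequency-splitting proof is the standard self-contained route: Cauchy--Schwarz on $\{|\xi|\leqslant R\}$ together with Plancherel gives the $R^{\frac{n}{2}}\|f\|_{L^{2}}$ contribution, the hypothesis $s>n/2$ enters exactly where it must, namely in the convergence of $\int_{|\xi|\geqslant R}|\xi|^{-2s}\,\mathrm{d}\xi$, which yields the $R^{\frac{n}{2}-s}\|f\|_{\dot{H}^{s}}$ contribution, and the choice $R=\big(\|f\|_{\dot{H}^{s}}/\|f\|_{L^{2}}\big)^{1/s}$ balances the two terms to produce precisely the exponents $1-\frac{n}{2s}$ and $\frac{n}{2s}$ in \eqref{eq:C1}; the degenerate cases are harmless since vanishing of either norm forces $f=0$. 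The one-dimensional estimate \eqref{eq:C2} is indeed the specialization $s=1$, and your alternative via $|f(x)|^{2}=2\int_{-\infty}^{x}f(y)f'(y)\,\mathrm{d}y$ plus Cauchy--Schwarz is also fine (justified by density). The only cosmetic point is that the bound $\|f\|_{L^{\infty}}\lesssim\|\widehat{f}\|_{L^{1}}$ should be read as saying $f$ coincides a.e.\ with a bounded continuous function, which is all that is used in the paper.
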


\section*{Acknowledgments}
The authors thank Ya-guang Wang (Shanghai Jiao Tong University) and Ryo Ikehata (Hiroshima University) for the suggestions in the preparation of the paper.

\end{document}